\newcommand{\tun}{\begin{picture}(5,0)(-2,-1)
\put(0,0){\circle*{2}}
\end{picture}}
\newcommand{\tdeux}{\begin{picture}(7,7)(0,-1)
\put(3,0){\circle*{2}}
\put(3,5){\circle*{2}}
\put(3,0){\line(0,1){5}}
\end{picture}}
\newcommand{\ttroisun}{\begin{picture}(15,12)(-5,-1)
\put(3,0){\circle*{2}}
\put(6,7){\circle*{2}}
\put(0,7){\circle*{2}}
\put(-0.65,0){$\vee$}
\end{picture}}
\newcommand{\ttroisdeux}{\begin{picture}(5,15)(-2,-1)
\put(0,0){\circle*{2}}
\put(0,5){\circle*{2}}
\put(0,10){\circle*{2}}
\put(0,0){\line(0,1){5}}
\put(0,5){\line(0,1){5}}
\end{picture}}
\newcommand{\tquatreun}{\begin{picture}(15,12)(-5,-1)
\put(3,0){\circle*{2}}
\put(6,7){\circle*{2}}
\put(0,7){\circle*{2}}
\put(3,7){\circle*{2}}
\put(-0.65,0){$\vee$}
\put(3,0){\line(0,1){7}}
\end{picture}}
\newcommand{\tquatredeux}{\begin{picture}(15,18)(-5,-1)
\put(3,0){\circle*{2}}
\put(6,7){\circle*{2}}
\put(0,7){\circle*{2}}
\put(0,14){\circle*{2}}
\put(-0.65,0){$\vee$}
\put(0,7){\line(0,1){7}}
\end{picture}}
\newcommand{\tquatrequatre}{\begin{picture}(15,18)(-5,-1)
\put(3,5){\circle*{2}}
\put(6,12){\circle*{2}}
\put(0,12){\circle*{2}}
\put(3,0){\circle*{2}}
\put(-0.65,5){$\vee$}
\put(3,0){\line(0,1){5}}
\end{picture}}
\newcommand{\tquatrecinq}{\begin{picture}(9,19)(-2,-1)
\put(0,0){\circle*{2}}
\put(0,5){\circle*{2}}
\put(0,10){\circle*{2}}
\put(0,15){\circle*{2}}
\put(0,0){\line(0,1){5}}
\put(0,5){\line(0,1){5}}
\put(0,10){\line(0,1){5}}
\end{picture}}
\newcommand{\tcinqun}{\begin{picture}(20,8)(-5,-1)
\put(3,0){\circle*{2}}
\put(-7,5){\circle*{2}}
\put(13,5){\circle*{2}}
\put(6,7){\circle*{2}}
\put(0,7){\circle*{2}}
\put(-0.5,0){$\vee$}
\put(3,0){\line(2,1){10}}
\put(3,0){\line(-2,1){10}}
\end{picture}}
\newcommand{\tcinqdeux}{\begin{picture}(15,14)(-5,-1)
\put(3,0){\circle*{2}}
\put(0,14){\circle*{2}}
\put(6,7){\circle*{2}}
\put(0,7){\circle*{2}}
\put(3,7){\circle*{2}}
\put(-0.65,0){$\vee$}
\put(3,0){\line(0,1){7}}
\put(0,7){\line(0,1){7}}
\end{picture}}
\newcommand{\tcinqcinq}{\begin{picture}(15,19)(-5,-1)
\put(3,0){\circle*{2}}
\put(6,7){\circle*{2}}
\put(0,7){\circle*{2}}
\put(6,14){\circle*{2}}
\put(0,14){\circle*{2}}
\put(-0.65,0){$\vee$}
\put(6,7){\line(0,1){7}}
\put(0,7){\line(0,1){7}}
\end{picture}}
\newcommand{\tcinqsept}{\begin{picture}(15,8)(-5,-1)
\put(3,0){\circle*{2}}
\put(6,7){\circle*{2}}
\put(0,7){\circle*{2}}
\put(3,14){\circle*{2}}
\put(9,14){\circle*{2}}
\put(-0.65,0){$\vee$}
\put(2.35,7){$\vee$}
\end{picture}}
\newcommand{\tcinqneuf}{\begin{picture}(15,26)(-5,-1)
\put(3,0){\circle*{2}}
\put(6,7){\circle*{2}}
\put(0,7){\circle*{2}}
\put(6,14){\circle*{2}}
\put(6,21){\circle*{2}}
\put(-0.65,0){$\vee$}
\put(6,7){\line(0,1){7}}
\put(6,14){\line(0,1){7}}
\end{picture}}
\newcommand{\tcinqdix}{\begin{picture}(15,19)(-5,-1)
\put(3,5){\circle*{2}}
\put(6,12){\circle*{2}}
\put(0,12){\circle*{2}}
\put(3,0){\circle*{2}}
\put(3,12){\circle*{2}}
\put(-0.5,5){$\vee$}
\put(3,0){\line(0,1){12}}
\end{picture}}
\newcommand{\tcinqonze}{\begin{picture}(15,26)(-5,-1)
\put(3,5){\circle*{2}}
\put(6,12){\circle*{2}}
\put(0,12){\circle*{2}}
\put(3,0){\circle*{2}}
\put(0,19){\circle*{2}}
\put(-0.65,5){$\vee$}
\put(3,0){\line(0,1){5}}
\put(0,12){\line(0,1){7}}
\end{picture}}
\newcommand{\tcinqtreize}{\begin{picture}(5,26)(-2,-1)
\put(0,0){\circle*{2}}
\put(0,7){\circle*{2}}
\put(0,0){\line(0,1){7}}
\put(0,14){\circle*{2}}
\put(-3,21){\circle*{2}}
\put(3,21){\circle*{2}}
\put(-3.65,14){$\vee$}
\put(0,7){\line(0,1){7}}
\end{picture}}
\newcommand{\tcinqquatorze}{\begin{picture}(9,26)(-5,-1)
\put(0,0){\circle*{2}}
\put(0,5){\circle*{2}}
\put(0,10){\circle*{2}}
\put(0,15){\circle*{2}}
\put(0,20){\circle*{2}}
\put(0,0){\line(0,1){5}}
\put(0,5){\line(0,1){5}}
\put(0,10){\line(0,1){5}}
\put(0,15){\line(0,1){5}}
\end{picture}}
\newcommand{\tdun}[1]{\begin{picture}(10,5)(-2,-1)
\put(0,0){\circle*{2}}
\put(3,-2){\tiny #1}
\end{picture}}
\newcommand{\tddeux}[2]{\begin{picture}(12,5)(0,-1)
\put(3,0){\circle*{2}}
\put(3,5){\circle*{2}}
\put(3,0){\line(0,1){5}}
\put(6,-2){\tiny #1}
\put(6,3){\tiny #2}
\end{picture}}
\newcommand{\tdtroisun}[3]{\begin{picture}(20,12)(-5,-1)
\put(3,0){\circle*{2}}
\put(6,7){\circle*{2}}
\put(0,7){\circle*{2}}
\put(-0.65,0){$\vee$}
\put(5,-2){\tiny #1}
\put(9,5){\tiny #2}
\put(-5,5){\tiny #3}
\end{picture}}
\newcommand{\tdtroisdeux}[3]{\begin{picture}(12,15)(-2,-1)
\put(0,0){\circle*{2}}
\put(0,5){\circle*{2}}
\put(0,10){\circle*{2}}
\put(0,0){\line(0,1){5}}
\put(0,5){\line(0,1){5}}
\put(3,-2){\tiny #1}
\put(3,3){\tiny #2}
\put(3,9){\tiny #3}
\end{picture}}
\newcommand{\tdquatreun}[4]{\begin{picture}(20,12)(-5,-1)
\put(3,0){\circle*{2}}
\put(6,7){\circle*{2}}
\put(0,7){\circle*{2}}
\put(3,7){\circle*{2}}
\put(-0.6,0){$\vee$}
\put(3,0){\line(0,1){7}}
\put(5,-2){\tiny #1}
\put(8.5,5){\tiny #2}
\put(1,10){\tiny #3}
\put(-5,5){\tiny #4}
\end{picture}}
\newcommand{\ptroisun}{\begin{picture}(15,12)(-5,-1)
\put(3,7){\circle*{2}}
\put(-0.65,0){$\wedge$}
\put(6,0){\circle*{2}}
\put(0,0){\circle*{2}}
\end{picture}}
\newcommand{\pquatreun}{\begin{picture}(15,12)(-5,-1)
\put(3,7){\circle*{2}}
\put(-0.65,0){$\wedge$}
\put(6,0){\circle*{2}}
\put(0,0){\circle*{2}}
\put(3,0){\circle*{2}}
\put(2.9,0){\line(0,1){7}}
\end{picture}}
\newcommand{\pquatredeux}{\begin{picture}(15,18)(-5,-1)
\put(3,14){\circle*{2}}
\put(-0.65,7){$\wedge$}
\put(6,7){\circle*{2}}
\put(0,7){\circle*{2}}
\put(0,0){\circle*{2}}
\put(0,0){\line(0,1){7}}
\end{picture}}
\newcommand{\pquatrequatre}{\begin{picture}(15,18)(-5,-1)
\put(3,7){\circle*{2}}
\put(-0.65,0){$\wedge$}
\put(6,0){\circle*{2}}
\put(0,0){\circle*{2}}
\put(3,12){\circle*{2}}
\put(3,7){\line(0,1){5}}
\end{picture}}
\newcommand{\pquatresix}{\begin{picture}(15,9)(-5,-1)
\put(0,0){\circle*{2}}
\put(7,0){\circle*{2}}
\put(0,7){\circle*{2}}
\put(7,7){\circle*{2}}
\put(0,0){\line(0,1){7}}
\put(7,0){\line(0,1){7}}
\put(0,1.5){$\scriptstyle \diagdown$}
\end{picture}}
\newcommand{\pquatresept}{\begin{picture}(15,9)(-5,-1)
\put(0,0){\circle*{2}}
\put(7,0){\circle*{2}}
\put(0,7){\circle*{2}}
\put(7,7){\circle*{2}}
\put(0,0){\line(0,1){7}}
\put(7,0){\line(0,1){7}}
\put(.5,1.5){$\scriptstyle \diagup$}
\put(0,1.5){$\scriptstyle \diagdown$}
\end{picture}}
\newcommand{\pquatrehuit}{\begin{picture}(15,18)(-5,-1)
\put(3,0){\circle*{2}}
\put(-0.65,0){$\vee$}
\put(6,7){\circle*{2}}
\put(0,7){\circle*{2}}
\put(3,14){\circle*{2}}
\put(-0.65,7){$\wedge$}
\end{picture}}
\newcommand{\pdtroisun}[3]{\begin{picture}(23,12)(-7,-1)
\put(3,7){\circle*{2}}
\put(-0.65,0){$\wedge$}
\put(6,0){\circle*{2}}
\put(0,0){\circle*{2}}
\put(5,5){\tiny #1}
\put(-7,-2){\tiny #2}
\put(9,-2){\tiny #3}
\end{picture}}
\newtheorem{defi}{\indent Definition}
\newtheorem{lemma}[defi]{\indent Lemma}
\newtheorem{cor}[defi]{\indent Corollary}
\newtheorem{theo}[defi]{\indent Theorem}
\newtheorem{prop}[defi]{\indent Proposition}
\newenvironment{proof}{\textbf{Proof.}}{\hfill $\Box$}
\newcommand{\tdelta}{\tilde{\Delta}}
\newcommand{\N}{\mathbb{N}}
\newcommand{\K}{\mathbb{K}}
\newcommand{\QP}{\mathbf{QP}}
\newcommand{\qp}{\mathbf{qp}}
\renewcommand{\P}{\mathbf{P}}
\newcommand{\p}{\mathbf{p}}
\newcommand{\h}{\mathcal{H}}
\newcommand{\isoclass}[1]{\lfloor #1 \rfloor}
\newcommand{\WQSym}{\mathbf{WQSym}}
\newcommand{\bfPW}{\mathbf{PW}}
\begin{document}

\title{Commutative and non-commutative bialgebras of quasi-posets and applications to Ehrhart polynomials}
\date{}
\author{Lo\"\i c Foissy\\ \\
{\small \it Fédération de Recherche Mathématique du Nord Pas de Calais FR 2956}\\
{\small \it Laboratoire de Mathématiques Pures et Appliquées Joseph Liouville}\\
{\small \it Université du Littoral Côte d'Opale-Centre Universitaire de la Mi-Voix}\\ 
{\small \it 50, rue Ferdinand Buisson, CS 80699,  62228 Calais Cedex, France}\\ \\
{\small \it Email: foissy@lmpa.univ-littoral.fr}}

\maketitle

\begin{abstract}
To any poset or quasi-poset is attached a lattice polytope, whose Ehrhart polynomial we study from a Hopf-algebraic point of view.
We use for this two interacting bialgebras on quasi-posets. The Ehrhart polynomial defines a Hopf algebra morphism taking its values
in $\mathbb{Q}[X]$; we deduce from the interacting bialgebras an algebraic proof of the duality principle, a generalization
and a new proof of a result on B-series due to Whright and Zhao, using a monoid of characters on quasi-posets,
and a generalization of Faulhaber's formula.

We also give non-commutative versions of these results: polynomials are replaced by packed words.
We obtain in particular a non-commutative duality principle.
\end{abstract}

\textbf{Keywords.} Ehrhart polynomials; Quasi-posets; Characters monoids; Interacting bialgebras\\

\textbf{AMS classification.} 16T30; 06A11

\tableofcontents

\section*{Introduction}

Let $P$ be a lattice polytope, that is to say that all its vertices are in $\mathbb{Z}^n$.
The Ehrhart polynomial $ehr^{cl}_P(X)$ is the unique polynomial such that, for all $k \geq 1$, $ehr^{cl}_P(k)$ is the number of points in $\mathbb{Z}^n\cap kP$,
where $kP$ is the image of $P$ by the homothety of center $0$ and ratio $k$.
For example, if $S$ is the square $[0,1]^n$ and $T$ is the triangle of vertices $(0,0)$, $(1,0)$ and $(1,1)$:
\begin{align*}
ehr^{cl}_S(X)&=(X+1)^2,&ehr^{cl}_T(X)&=\frac{(X+1)(X+2)}{2}.
\end{align*}
These polynomials satisfy the reciprocity principle: for all $k \geq 1$, $(-1)^{dim(P)}ehr^{cl}(-k)$ is the number of points of
$\mathbb{Z}^n\cap k \dot{P}$, where $\dot{P}$ is the interior of $P$. For example:
\begin{align*}
ehr^{cl}_S(-X)&=(X-1)^2,&ehr^{cl}_T(-X)&=\frac{(X-1)(X-2)}{2}.
\end{align*}
We refer to \cite{BR} for general results on Ehrhart polynomials.\\

It turns out that these polynomials appear in the theory of B-series (B for Butcher \cite{Butcher}), as explained in \cite{Brouder,Chapoton}.
We now consider rooted trees:
\begin{align*}
&\tun;&&\tdeux;&&\ttroisun,\ttroisdeux;&&\tquatreun,\tquatredeux,\tquatrequatre,\tquatrecinq;&&
\tcinqun,\tcinqdeux,\tcinqcinq,\tcinqsept,\tcinqneuf,\tcinqdix,\tcinqonze,\tcinqtreize,\tcinqquatorze;\ldots
\end{align*}

If $t$ is a rooted tree, we orient its edges from the root to the leaves. If $i,j$ are two vertices of $t$, we shall write
$i\stackrel{t}{\rightarrow} j$ if there is an edge from $i$ to $j$ in $t$.

To any rooted tree $t$, whose vertices are indexed by $1\ldots n$, we associate a lattice polytope $pol(t)$ in a following way:
\begin{align*}
pol(t)&=\left\{(x_1,\ldots,x_n)\in [0,1]^n\mid \forall \:1\leq i,j \leq n, (i\stackrel{t}{\rightarrow} j)\Longrightarrow (x_i\leq x_j)\right\}
\end{align*}
For example, if $t=\tdeux$, indexed as $\tddeux{$1$}{$2$}$, then $pol(t)=T$.

We can consider the Ehrhart polynomial $ehr^{cl}_{pol(t)}(X)$, which we shall simply denote by $ehr^{cl}_t(X)$: for all $k \geq 1$,
\begin{align*}
ehr^{cl}_t(k)&=\sharp\left\{(x_1,\ldots,x_n)\in \{0,\ldots,k\}^n\mid \forall \:1\leq i,j \leq n, 
(i\stackrel{t}{\rightarrow} j)\Longrightarrow (x_i\leq x_j)\right\}.
\end{align*}
Note that $ehr^{cl}_t$ does not depend on the indexation of the vertices of $t$. By the duality principle:
\begin{align*}
(-1)^nehr^{cl}_t(-k)&=\sharp\left\{(x_1,\ldots,x_n)\in \{1,\ldots,k-1\}^n\mid \forall \:1\leq i,j \leq n, (i\stackrel{t}{\rightarrow} j)\Longrightarrow (x_i<x_j)\right\}.
\end{align*}
A B-series is a formal series indexed by rooted trees, of the form:
$$\sum_{t} a_t \frac{t}{aut(t)}=a_{\tun}\tun+a_{\tdeux}\tdeux+a_{\ttroisun}\frac{\ttroisun}{2}+a_{\ttroisdeux}\ttroisdeux+\ldots,$$
where $aut(t)$ is the number of automorphisms of $t$. The following B-series is of special importance in numerical analysis:
$$E=\sum_t \frac{1}{t!}\frac{t}{aut(t)}
=\tun+\frac{1}{2}a_{\tdeux}\tdeux+\frac{1}{3}\frac{\ttroisun}{2}+\frac{1}{6}\ttroisdeux+\ldots,$$
where $t!$ is the tree factorial (see definition \ref{defi33}). This series is the  formal solution of an ordinary differential equation, 
describing the flow of a vector field. The set of B-series is given a group structure by a substitution operation,
which is dually represented by the contraction-extraction coproduct defined in \cite{CEFM}. The inverse of $E$ is called the backward error analysis:
$$E^{-1}=\sum_t \lambda_t \frac{t}{aut(t)!}.$$
Wright and Zhao \cite{Zhao} proved that these coefficients $\lambda_t$ are related to Ehrhart polynomials:
$$\lambda_t=(-1)^{|t|} \frac{d\: ehr^{cl}_t(X)}{dX}_{\mid X=-1}.$$

We shall in this text study Ehrhart polynomial attached to quasi-posets in a combinatorial Hopf-algebraic way. A quasi-poset $P$ is a pair $(A,\leq_P)$,
where $A$ is a finite set and $\leq_P$ is a reflexive and transitive relation on $A$. The isoclasses of quasi-posets are represented by
their Hasse graphs: 
\begin{align*}
&1;&&\tun;&&\tun\tun,\tdeux,\tdun{$2$};&&\tun\tun\tun,\tun\tdeux,\tun\tdun{$2$},\ttroisun,\ptroisun,\ttroisdeux,\tddeux{}{$2$},\tddeux{$2$}{},\tdun{$3$};\ldots
\end{align*}
In particular, rooted trees can be seen as quasi-posets.
For any quasi-poset $P=(\{1,\ldots,n\},\leq_P)$, the polytope associated to $P$ is:
$$pol(P)=\{(x_1,\ldots,x_n)\in [0,1]^n\mid \forall \: 1\leq  i,j\leq n, (i\leq_P j)\Longrightarrow (x_i\leq x_j)\}.$$
We put $ehr_P(X)=ehr^{cl}_{pol(P)}(X-1)$; note the translation by $-1$, which will give us objects more suitable to our purpose.
In other words, for all $k\geq 1$:
\begin{align*}
ehr_P(k)&=\sharp\{(x_1,\ldots,x_n)\in \{1,\ldots,k\}^n\mid \forall \:1\leq i,j \leq n, (i\leq_P j)\Longrightarrow (x_i\leq x_j)\}.
\end{align*}
We also define a polynomial $ehr_P^{str}(X)$ such that for all $k\geq 1$:
\begin{align*}
ehr_P^{str}(k)&=\sharp\{(x_1,\ldots,x_n)\in \{1,\ldots,k\}^n\mid \forall \:1\leq i,j \leq n, (i\leq_P j\mbox{ and not }j\leq_P i)\Longrightarrow (x_i<x_j)\}.
\end{align*}
See definition \ref{defi21} and proposition \ref{prop22} for more details. These polynomials can be inductively computed,
with the help of the minimal elements of $P$ (proposition \ref{prop25}).\\

We shall consider two products $m$ and $\downarrow$, and two coproducts $\Delta$ and $\delta$ on the space $\h_\qp$ generated
by isoclasses of quasi-posets. The coproduct $\Delta$, defined in \cite{FM,FMP} by restriction to open and closed sets
of the topologies associated to quasi-posets, makes $(\h_\qp,m,\Delta)$ a graded, connected Hopf algebra,
and $(\h_\qp,\downarrow,\Delta)$ an infinitesimal bialgebra; the coproduct $\delta$, defined in \cite{FFM} by an extraction-contraction operation, 
makes $(\h_\qp,m,\delta)$ a bialgebra. Moreover, $\delta$ is also a right coaction
of $(\h_\qp,m,\delta)$ over $(\h_\qp,m,\Delta)$, and $(\h_\qp,m,\Delta)$ becomes a Hopf algebra in the category
of $(\h_\qp,m,\delta)$-comodules, which we summarize telling that $(\h_\qp,m,\Delta)$ and $(\h_\qp,m,\delta)$
are two bialgebras in cointeraction (definition \ref{defi1}). For example, the bialgebras $(\K[X],m,\Delta)$ and 
$(\K[X],m,\delta)$ where $m$ is the usual product of $\K[X]$ and $\Delta$, $\delta$ are the coproducts defined by
\begin{align*}
\Delta(X)&=X\otimes 1+1\otimes X,&\delta(X)&=X\otimes X,
\end{align*}
are two cointeracting bialgebras.\\

Ehrhart polynomials $ehr_P(X)$ and $ehr_P^{str}(X)$ can now be seen as maps from $\h_\qp$ to $\K[X]$, and both are Hopf algebra morphisms
from $(\h_\qp,m,\Delta)$ to $(\K[X],m,\Delta)$ (theorem \ref{theo23});
we shall prove in theorem \ref{theo37} that $ehr^{str}$ is the unique morphism from $\h_\qp$ to $\K[X]$ compatible with
both bialgebra structures on $\h_\qp$ and $\K[X]$.
Using the cointeraction between the two bialgebra structures on $\h_\qp$, we show that the monoid $M_\qp$ of characters of $(\h_\qp,m,\delta)$
acts on the set $E_{\h_\qp\rightarrow \K[X]}$ of Hopf algebra morphisms from $(\h_\qp,m,\Delta)$ to $\K[X]$ (lemma \ref{lem5}).
Moreover, there exists a particular homogeneous morphism $\phi_0\in E_{\h_\qp\rightarrow \K[X]}$ such that for all quasi-poset $P$:
$$\phi_0(P)=\lambda_PX^{cl(P)}=\frac{\mu_P}{cl(P)!}X^{cl(P)},$$
where $\mu_P$ is the number of heap-orderings of $P$ and $cl(P)$ is the number of equivalence classes of the equivalence associated to the 
quasi-order of $P$ (proposition \ref{prop32}). This formula simplifies if $P$ is a rooted tree: in this case,
$$\phi_0(P)=\frac{1}{P!}X^{|P|}.$$
We prove that there exist characters $\alpha$ and $\alpha^{str}$ in $M_\qp$, such that for any quasi-poset $P$:
\begin{align*}
ehr_P(X)&=\sum_{\sim\triangleleft P}\frac{\mu_{P/\sim}}{cl(\sim)!} \alpha_{P|\sim}X^{cl(\sim)},&
ehr^{str}_P(X)&=\sum_{\sim\triangleleft P}\frac{\mu_{P/\sim}}{cl(\sim)!} \alpha^{str}_{P|\sim}X^{cl(\sim)},
\end{align*}
where the sum is over a certain family of equivalence relations $\sim$ on the set of vertices of $V$, $P|\sim$ is a restriction operation and $P/\sim$ is a contraction operation.
Applied to corollas, this gives Faulhaber's formula.
We prove that $\alpha^{str}$ is the inverse of the character $\lambda$ associated to $\phi_0$ (theorem \ref{theo37}),
which is a generalization, as well as a Hopf-algebraic proof, of Wright and Zhao's result.
We also give an algebraic proof of the duality principle (theorem \ref{theo38}), and we define a Hopf algebra automorphism 
$\theta:(\h_\qp,m,\Delta)\longrightarrow (h_\qp,m,\Delta)$ with the help of the cointeraction of the two bialgebra structures on $\h_\qp$, 
satisfying $ehr^{str}\circ \theta=ehr$ (proposition \ref{prop39}).\\

We propose non-commutative versions of these results in the last section of the paper. Here, (isoclasses of) quasi-posets are 
replaced by quasi-posets on sets $[n]=\{1,\ldots,n\}$, making a Hopf algebra $\h_\QP$, in cointeraction with $(\h_\qp,m,\delta)$,
and $\K[X]$ is replaced by the Hopf algebra of packed words $\WQSym$ \cite{NovelliThibon}.
We define two surjective Hopf algebra morphisms $EHR$ and $EHR^{str}$ from $\h_\QP$ to $\WQSym$ (proposition \ref{prop41}),
generalizing $ehr$ and $ehr^{str}$. The automorphism $\theta$ is generalized as a Hopf algebra automorphism $\Theta:\h_\QP\longrightarrow \h_\QP$,
such that $EHR^{str}\circ \Theta=EHR$ (proposition \ref{prop42}), and we formulate a non-commutative duality principle
(theorem \ref{theo45}), and we obtain a commutative diagram of Hopf algebras:
$$\xymatrix{\h_\QP\ar@{^(->>}[d]_\Theta \ar@{->>}[rd]^(.6){EHR}\ar@{-->>}@/^1pc/[rrrrdd]&&&&\\
\h_\QP\ar@{^(->>}[d]_\Psi\ar@{->>}[r]^(.4){EHR^{str}}\ar@{-->>}@/^1pc/[rrrrdd]|(.32)\hole&\WQSym\ar@{^(->>}[d]_{\Phi_{-1}}
\ar@{-->>}[rrrrdd]^H|(.7)\hole&&&\\
\h_\QP\ar@{->>}[r]^(.4){EHR}\ar@{-->>}@/^1pc/[rrrrdd]&\WQSym\ar@{-->>}[rrrrdd]^H|(.7)\hole
&&&\h_\qp\ar@{^(->>}[d]_\theta \ar@{->>}[rd]^{ehr}&\\
&&&&\h_\qp\ar@{^(->>}[d]_\psi\ar@{->>}[r]_{ehr^{str}}&\K[X]\ar@{^(->>}[d]_{\phi_{-1}}\\
&&&&\h_\qp\ar@{->>}[r]_{ehr}&\K[X]}$$
The two triangles reflects the properties of morphisms $\Theta$ and $\theta$, whereas the two squares are the duality principles.\\

\textbf{Aknowledgment.} The research leading these results was partially supported by the French National Research Agency under the reference
ANR-12-BS01-0017.\\

\textbf{Notations}. We denote by $\K$ a commutative field of characteristic zero. All the objects (vector spaces, algebra, and so on)
in this text are taken over $\K$.\\

\section{Bialgebras in cointeraction}

We give in this section some general results on bialgebras in cointeractions. They will be used in the sequel for quasi-posets,
leading to Ehrhart polynomials. We shall use them on graphs in order to obtain chromatic polynomials in \cite{Foissychromatic}.

\subsection{Definition}

\begin{defi}\label{defi1}
Let $A$ and $B$ be two bialgebras. We shall say that $A$ and $B$ are in cointeraction if:
\begin{itemize}
\item $B$ coacts on $A$, via a map $\rho:\left\{\begin{array}{rcl}
A&\longrightarrow&A\otimes B \\
a&\longrightarrow&\rho(a)=a_1\otimes a_0.
\end{array}\right.$
\item $A$ is a bialgebra in the category of $B$-comodules, that is to say:
\begin{itemize}
\item $\rho(1_A)=1_A\otimes 1_B$.
\item $m^3_{2,4}\circ (\rho\otimes \rho)\circ \Delta_A=(\Delta_A\otimes Id)\circ \rho$, with:
$$m^3_{2,4}:\left\{\begin{array}{rcl}
A\otimes B\otimes A\otimes B&\longrightarrow&A\otimes A\otimes B\\
a_1\otimes b_1 \otimes a_2 \otimes b_2&\longrightarrow&a_1 \otimes a_2 \otimes b_1 b_2.
\end{array}\right.$$
Equivalently, in Sweedler's notations, for all $a\in A$:
$$(a^{(1)})_1 \otimes (a^{(2)})_1\otimes (a^{(1)})_0 (a^{(2)})_0=(a_1)^{(1)}\otimes (a_1)^{(2)}\otimes a_0.$$
\item For all $a,b\in A$, $\rho(ab)=\rho(a)\rho(b)$.
\item For all $a\in A$, $(\varepsilon_A\otimes Id)\circ \rho(a)=\varepsilon_A(a)1_B$.
\end{itemize}\end{itemize}\end{defi}

Examples of bialgebras in interaction can be found in \cite{CEFM} (for rooted trees) and in \cite{Manchon} (for various families of graphs).
Another example is given by the algebra $\K[X]$, with its usual product $m$, and the two coproducts defined by:
\begin{align*}
\Delta(X)&=X\otimes 1+1\otimes X,&\delta(X)&=X\otimes X.
\end{align*}
The bialgebras $(\K[X],m,\Delta)$ and $(\K[X],m,\delta)$ are in cointeractions, via the coaction $\rho=\delta$.
Identifying $\K[X]\otimes \K[X]$ and $\K[X,Y]$:
\begin{align*}
\Delta(P)(X,Y)&=P(X+Y),&\delta(P)(X,Y)&=P(XY).
\end{align*}

\textbf{Remark.} If $A$ and $B$ are in cointeraction, the coaction of $B$ on $A$ is an algebra morphism.

\begin{prop}
Let $A$ and $B$ be two bialgebras in cointeraction.
We assume that $A$ is a Hopf algebra, with antipode $S$. Then $S$ is a morphism of $B$-comodules,
that is to say:
\begin{align*}
\rho\circ S&=(S\otimes Id)\circ \rho
\end{align*}\end{prop}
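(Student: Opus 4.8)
The plan is to exploit the fact that, because $A$ and $B$ are in cointeraction, the coaction $\rho$ is an algebra morphism (this is the Remark just above the statement), and to use the standard characterization of the antipode via the convolution algebra. Concretely, I would work in the space of $\K$-linear maps $\mathrm{Hom}(A, A\otimes B)$, equipped with the convolution product $f*g = m_{A\otimes B}\circ(f\otimes g)\circ \Delta_A$, where $m_{A\otimes B}$ is the product of the algebra $A\otimes B$. The two candidate maps, $\rho\circ S$ and $(S\otimes \mathrm{Id})\circ \rho$, will both turn out to be two-sided inverses of the single map $\rho$ in this convolution algebra, and uniqueness of inverses then forces them to coincide.

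First I would verify that $\rho$ itself is a unit-preserving algebra morphism from $A$ to $A\otimes B$, so that precomposition and postcomposition with $\rho$ interact well with convolution; the key compatibility I want is that $\rho$ intertwines the coproducts in the sense that $(\rho\otimes\rho)\circ\Delta_A$ relates to $\Delta_{A\otimes B}\circ\rho$ up to the reshuffling map $m^3_{2,4}$, exactly as encoded in the comodule-bialgebra axiom of Definition~\ref{defi1}. Using this, I would show $\rho * (\rho\circ S) = u_{A\otimes B}\circ\varepsilon_A$, where $u_{A\otimes B}$ is the unit of $A\otimes B$: expanding the convolution gives $m_{A\otimes B}\circ(\rho\otimes(\rho\circ S))\circ\Delta_A = \rho\circ m_A\circ(\mathrm{Id}\otimes S)\circ\Delta_A = \rho\circ(u_A\circ\varepsilon_A) = u_{A\otimes B}\circ\varepsilon_A$, since $\rho$ is an algebra morphism, $S$ is the antipode of $A$, and $\rho(1_A)=1_A\otimes 1_B$.

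Next I would show that $(S\otimes\mathrm{Id})\circ\rho$ is a left convolution inverse of $\rho$, i.e. $\bigl((S\otimes\mathrm{Id})\circ\rho\bigr) * \rho = u_{A\otimes B}\circ\varepsilon_A$. This is where the comodule-bialgebra compatibility does the real work: expanding the convolution and applying the identity $(a^{(1)})_1\otimes(a^{(2)})_1\otimes(a^{(1)})_0(a^{(2)})_0=(a_1)^{(1)}\otimes(a_1)^{(2)}\otimes a_0$ from Definition~\ref{defi1} lets me pull the $A$-coproduct through $\rho$, apply $S$ on the first tensor leg where it meets $\Delta_A$ to produce $u_A\circ\varepsilon_A$ on that leg, and then collapse the $B$-component using the counit condition $(\varepsilon_A\otimes\mathrm{Id})\circ\rho(a)=\varepsilon_A(a)1_B$. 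The main obstacle is precisely this computation: one must track the Sweedler indices carefully so that the single $S$ appearing in $(S\otimes\mathrm{Id})$ lands on a factor that forms an antipode pair, while the $B$-legs multiply back up to $1_B$ via counitality.

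Finally, having established that $\rho$ admits $\rho\circ S$ as a right convolution inverse and $(S\otimes\mathrm{Id})\circ\rho$ as a left convolution inverse, I invoke the standard uniqueness of inverses in the monoid $(\mathrm{Hom}(A,A\otimes B),*)$ — which holds because $A\otimes B$ is an (associative, unital) algebra and $A$ a coalgebra, so $*$ is associative with unit $u_{A\otimes B}\circ\varepsilon_A$ — to conclude $\rho\circ S=(S\otimes\mathrm{Id})\circ\rho$. I expect the two inverse computations to be short once the axioms are in place; the only delicate point is the bookkeeping in the left-inverse step, and I would present that one in full Sweedler detail while treating the right-inverse step more briefly.
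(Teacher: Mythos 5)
Your proposal is correct and follows essentially the same route as the paper: the paper also works in the convolution algebra $(End_\K(A,A\otimes B),\circledast)$ with unit $a\mapsto \varepsilon_A(a)1_A\otimes 1_B$, proves that $(S\otimes Id)\circ\rho$ is a left convolution inverse of $\rho$ via the comodule-bialgebra compatibility $(a^{(1)})_1\otimes(a^{(2)})_1\otimes(a^{(1)})_0(a^{(2)})_0=(a_1)^{(1)}\otimes(a_1)^{(2)}\otimes a_0$ together with the counit condition, and that $\rho\circ S$ is a right convolution inverse using that $\rho$ is an algebra morphism with $\rho(1_A)=1_A\otimes 1_B$, then concludes by the one-sided-inverse argument $F_1=F_1\circledast\rho\circledast F_2=F_2$. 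Both key computations and the final uniqueness step in your plan match the paper's proof exactly.
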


\begin{proof} 
We work in the space $End_\K(A,A\otimes B)$. As $A\otimes B$ is an algebra and $A$ is a coalgebra, 
it is an algebra for the convolution product $\circledast$:
$$\forall f,g\in End_\K(A,A\otimes B),\: f\circledast g=m_{A\otimes B}\circ (f\otimes g)\circ \Delta_A.$$
Its unit is:
$$\eta:\left\{\begin{array}{rcl}
A&\longrightarrow&A\otimes B\\
a&\longrightarrow&\varepsilon(a)1_A\otimes 1_B.
\end{array}\right.$$
We consider three elements in this algebra, respectively $\rho$, $F_1=(S\otimes Id)\circ \rho$ and $F_2=\rho \circ S$. Firstly:
\begin{align*}
(F_1\circledast \rho)(a)&=S((a^{(1)})_1)(a^{(2)})_1\otimes (a^{(1)})_0 (a^{(2)})_0\\
&=S((a_1)^{(1)})(a_1)^{(2)}\otimes a_0\\
&=\varepsilon_A(a_1)1_A\otimes a_0\\
&=\varepsilon_A(a)1_A\otimes 1_B\\
&=\eta(a).
\end{align*}
Secondly:
\begin{align*}
(\rho \circledast F_2)(a)&=(a^{(1)})_1S(a^{(2)})_1\otimes (a^{(1)})_0(S(a^{(2)}))_0\\
&=\varepsilon_A(a)(1_A)_1\otimes (1_A)_0\\
&=\varepsilon_A(a)1_A\otimes 1_B\\
&=\eta(a).
\end{align*}
We obtain that $F_1\circledast\rho=\rho \circledast F_2=\eta$, so 
$F_1=F_1\circledast \eta=F_1\circledast \rho \circledast F_2=\eta \circledast F_2=F_2$. \end{proof}

\subsection{Monoids actions}

\begin{prop} 
Let $A$ and $B$ be two bialgebras in cointeraction, through the coaction $\rho$.
We denote by $M_A$ and $M_B$ the monoids of characters of respectively $A$ and $B$. Then $B$ acts on $A$ by monoid endomorphisms,
via the map:
$$\leftarrow:\left\{\begin{array}{rcl}
M_A\times M_B&\longrightarrow&M_A\\
(\phi,\lambda)&\longrightarrow&\phi \leftarrow \lambda=(\phi\otimes \lambda)\circ \rho.
\end{array}\right.$$
\end{prop}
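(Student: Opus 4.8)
The plan is to verify the two required properties in turn: that for each fixed character $\lambda \in M_B$ the map $\phi \mapsto \phi \leftarrow \lambda$ sends characters of $A$ to characters of $A$, and that the assignment $\lambda \mapsto (\phi \mapsto \phi\leftarrow\lambda)$ is a monoid homomorphism from $M_B$ into the monoid of endomorphisms of $M_A$ under composition. The key technical fact throughout is that $\rho$ is an algebra morphism (noted in the Remark following Definition \ref{defi1}) and a coassociative-compatible coaction, so the coaction axioms translate directly into identities I can feed into the convolution computations.

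First I would check that $\phi\leftarrow\lambda$ is a character. It is clearly linear, and it is unital since $\rho(1_A)=1_A\otimes 1_B$ gives $(\phi\otimes\lambda)\circ\rho(1_A)=\phi(1_A)\lambda(1_B)=1$. For multiplicativity, take $a,b\in A$ and compute
\begin{align*}
(\phi\leftarrow\lambda)(ab)&=(\phi\otimes\lambda)\circ\rho(ab)=(\phi\otimes\lambda)(\rho(a)\rho(b)),
\end{align*}
using that $\rho$ is an algebra morphism. Writing $\rho(a)=a_1\otimes a_0$ and $\rho(b)=b_1\otimes b_0$, the product in $A\otimes B$ gives $a_1b_1\otimes a_0b_0$, and applying $\phi\otimes\lambda$ yields $\phi(a_1b_1)\lambda(a_0b_0)=\phi(a_1)\phi(b_1)\lambda(a_0)\lambda(b_0)$ since $\phi$ and $\lambda$ are themselves characters. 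This factorizes as $(\phi\leftarrow\lambda)(a)\,(\phi\leftarrow\lambda)(b)$, as desired.

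Next I would show the map respects the monoid structure on $M_B$, i.e. that $\phi\leftarrow(\lambda\lambda')=(\phi\leftarrow\lambda)\leftarrow\lambda'$ and that $\phi\leftarrow\varepsilon_B=\phi$; this is the statement that each $\lambda$ acts as an endomorphism and that the action is a monoid action. The unit case follows from the counit compatibility axiom $(\varepsilon_A\otimes Id)\circ\rho=\varepsilon_A(\cdot)1_B$, which dually gives $(Id\otimes\varepsilon_B)\circ\rho=Id$, so $\phi\leftarrow\varepsilon_B=\phi$. The composition case is where the coassociativity-type axiom
$$(\Delta_A\otimes Id)\circ\rho=m^3_{2,4}\circ(\rho\otimes\rho)\circ\Delta_A$$
enters: unfolding $(\phi\leftarrow\lambda)\leftarrow\lambda'=(\phi\otimes\lambda\otimes\lambda')\circ(\rho\otimes Id)\circ\rho$ against $\phi\leftarrow(\lambda\lambda')=(\phi\otimes(\lambda\lambda'))\circ\rho=(\phi\otimes\lambda\otimes\lambda')\circ(Id\otimes\Delta_B)\circ\rho$, the equality reduces to the comodule-coassociativity of $\rho$, namely $(\rho\otimes Id)\circ\rho=(Id\otimes\Delta_B)\circ\rho$, which is precisely the coaction axiom.

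The main obstacle I anticipate is bookkeeping rather than conceptual: keeping Sweedler-type indices straight across the two tensor legs and correctly reading off which coaction axiom yields coassociativity of the action. In particular, one must confirm that the coaction $\rho$ satisfies $(\rho\otimes Id)\circ\rho=(Id\otimes\Delta_B)\circ\rho$ in the present setup; this is part of ``$B$ coacts on $A$'' in Definition \ref{defi1}, so it is available, but it should be invoked explicitly. Once that identity is in hand, both the endomorphism property and the monoid-action property follow by the straightforward convolution computations sketched above, and no deeper structural input is needed.
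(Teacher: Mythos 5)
Your two verified points are correct and match the paper: $\phi\leftarrow\lambda$ is again a character because $\rho$ is an algebra morphism, and $(\phi\leftarrow\lambda)\leftarrow\lambda'=\phi\leftarrow(\lambda*\lambda')$ follows from the comodule coassociativity $(\rho\otimes Id)\circ\rho=(Id\otimes\Delta_B)\circ\rho$. But you have misread what ``acts by monoid endomorphisms'' asserts, and the distinctive half of the proposition is missing. The identity $\phi\leftarrow(\lambda*\lambda')=(\phi\leftarrow\lambda)\leftarrow\lambda'$ is the action axiom, i.e.\ multiplicativity in the $M_B$-variable; it is \emph{not}, as you claim, ``the statement that each $\lambda$ acts as an endomorphism''. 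An endomorphism here means an endomorphism of the monoid $(M_A,\star)$, so what remains to be proved is multiplicativity in the $M_A$-variable,
$$(\phi_1\star\phi_2)\leftarrow\lambda=(\phi_1\leftarrow\lambda)\star(\phi_2\leftarrow\lambda),$$
together with $\varepsilon_A\leftarrow\lambda=\varepsilon_A$ (the latter is immediate from the axiom $(\varepsilon_A\otimes Id)\circ\rho(a)=\varepsilon_A(a)1_B$). Nothing in your argument addresses this, and it cannot be deduced from the coaction axioms alone.

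This missing step is exactly where the cointeraction axiom $m^3_{2,4}\circ(\rho\otimes\rho)\circ\Delta_A=(\Delta_A\otimes Id)\circ\rho$ of Definition \ref{defi1} is needed --- the very axiom you quote for your ``composition case'' and then, correctly, set aside there in favour of comodule coassociativity, so that in your write-up it is cited but never actually used anywhere. With the paper's notation $\rho(a)=a_1\otimes a_0$ (with $a_1\in A$, $a_0\in B$), the missing computation reads
\begin{align*}
((\phi_1\star\phi_2)\leftarrow\lambda)(a)&=\phi_1\bigl((a_1)^{(1)}\bigr)\,\phi_2\bigl((a_1)^{(2)}\bigr)\,\lambda(a_0)\\
&=\phi_1\bigl((a^{(1)})_1\bigr)\,\phi_2\bigl((a^{(2)})_1\bigr)\,\lambda\bigl((a^{(1)})_0(a^{(2)})_0\bigr)\\
&=(\phi_1\leftarrow\lambda)(a^{(1)})\,(\phi_2\leftarrow\lambda)(a^{(2)})
=((\phi_1\leftarrow\lambda)\star(\phi_2\leftarrow\lambda))(a),
\end{align*}
where the second equality is the cointeraction axiom and the third uses that $\lambda$ is a character of $B$, hence multiplicative. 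Until you supply this computation (and the unit preservation), you have shown only that $\leftarrow$ is a monoid action of $M_B$ on the \emph{set} $M_A$, which is strictly weaker than the proposition.
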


\begin{proof} We denote by $*$ the convolution product of $M_B$ and by $\star$ the convolution product of $M_A$.
As $\rho:A\longrightarrow A\otimes B$ is an algebra morphism, $\leftarrow$ is well-defined.
Let $\phi\in M_A$, $\lambda_1,\lambda_2\in M_B$.
\begin{align*}
(\phi\leftarrow \lambda_1)\leftarrow \lambda_2&=(\phi\otimes \lambda_1\otimes \lambda_2)\circ (\rho \otimes Id)\circ \rho\\
&=(\phi\otimes \lambda_1\otimes \lambda_2)\circ(Id \otimes \Delta_B)\circ \rho\\
&=\phi\leftarrow (\lambda_1*\lambda_2).
\end{align*}
So $\leftarrow$ is an action. Let $\phi_1,\phi_2\in M_A$, $\lambda\in M_B$. For all $a\in A$:
\begin{align*}
((\phi_1\star \phi_2)\leftarrow \lambda)(a)&=(\phi_1\otimes \phi_2\otimes \lambda)\circ (\Delta_A\otimes Id)\circ \rho(a)\\
&=(\phi_1\otimes \phi_2\otimes \lambda)((a_0)^{(1)}\otimes (a_0)^{(2)}\otimes a_1)\\
&=(\phi_1\otimes \phi_2\otimes \lambda)((a^{(1)})_0\otimes (a^{(2)})_0\otimes (a^{(1)})_1 (a^{(2)})_1)\\
&=\phi_1((a^{(1)})_0)\lambda((a^{(1)})_1)\phi_2((a^{(2)})_0)\lambda((a^{(2)})_1)\\
&=(\phi_1\leftarrow \lambda)(a^{(1)})(\phi_2\leftarrow \lambda)(a^{(2)})\\
&=((\phi_1\leftarrow \lambda)\star (\phi_2\leftarrow \lambda))(a).
\end{align*}
So $\leftarrow$ is an action by monoid endomorphisms. \end{proof}\\

\textbf{Example.} We take $A=(\K[X],m,\Delta)$, $B=(\K[X],m,\delta)$ and $\rho=\delta$. We consider the map:
$$ev:\left\{\begin{array}{rcl}
\K&\longrightarrow&\K[X]^*\\
\lambda&\longrightarrow&\left\{\begin{array}{rcl}
\K[X]&\longrightarrow&\K\\
P(X)&\longrightarrow&ev_\lambda(P)=P(\lambda).
\end{array}\right.
\end{array}\right.$$
Then $ev$ is an isomorphism from $(\K,+)$ to $(M_A,\star)$ and from $(\K,.)$ to $(M_B,*)$. Moreover, for all $\lambda,\mu \in \K$:
$$ev_\lambda \leftarrow ev_\mu=ev_{\lambda\mu}.$$

\begin{prop}\label{prop4}
Let $A$ and $B$ be two bialgebras in cointeraction, through the coaction $\rho$. 
\begin{enumerate}
\item Let $H$ be a bialgebra.
We denote by $M_B$ the monoid of characters of $B$ and by $E_{A\rightarrow H}$ the set of bialgebra morphisms from $A$ to $H$.
Then $M_B$ acts on $E_{A\rightarrow H}$ via the map:
$$\leftarrow:\left\{\begin{array}{rcl}
E_{A\rightarrow H}\times M_B&\longrightarrow&E_{A\rightarrow H}\\
(\phi,\lambda)&\longrightarrow&\phi\leftarrow \lambda=(\phi \otimes \lambda)\circ \rho
\end{array}\right.$$
\item Let $H_1$ and $H_2$ be two bialgebras and let $\theta:H_1\longrightarrow H_2$ be a bialgebra morphism. For all $\phi\in E_{A\leftarrow H_1}$,
for all $\lambda \in M_B$, in $E_{A\leftarrow H_2}$:
$$\theta \circ (\phi\leftarrow \lambda)=(\theta \circ \phi)\leftarrow \lambda.$$
\item  if $\lambda,\mu \in M_B$, in $E_{A\rightarrow A}$:
\begin{align*}
(Id \leftarrow \lambda) \circ (Id \leftarrow \mu)&=Id \leftarrow (\lambda *\mu).
\end{align*}
The following map is an injective monoid morphism:
\begin{align*}
&\left\{\begin{array}{rcl}
(M_B,*)&\longrightarrow&(E_{A\rightarrow A},\circ)\\
\lambda&\longrightarrow&Id\leftarrow \lambda.
\end{array}\right.\end{align*}\end{enumerate}\end{prop}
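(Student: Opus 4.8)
The plan is to treat the three parts in order, the substance lying in parts (1) and (3).

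For part (1), I would first check that $\phi\leftarrow\lambda=(\phi\otimes\lambda)\circ\rho$ is genuinely a bialgebra morphism. Since $\rho$ is an algebra morphism (by the remark following Definition \ref{defi1}) and both $\phi$ and $\lambda$ are algebra morphisms, and since $\lambda$ takes central scalar values, the map $\phi\otimes\lambda\colon A\otimes B\to H$ is an algebra morphism; composing with $\rho$ shows that $\phi\leftarrow\lambda$ is multiplicative and unital. The coalgebra compatibility is the crux. Writing $\rho(a)=a_1\otimes a_0$ and using that $\phi$ is a coalgebra morphism, I get $\Delta_H((\phi\leftarrow\lambda)(a))=\lambda(a_0)\,\phi((a_1)^{(1)})\otimes\phi((a_1)^{(2)})$. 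Applying $\phi\otimes\phi\otimes\lambda$ to the cointeraction identity $(a_1)^{(1)}\otimes(a_1)^{(2)}\otimes a_0=(a^{(1)})_1\otimes(a^{(2)})_1\otimes(a^{(1)})_0(a^{(2)})_0$ and using that $\lambda$ is multiplicative turns this expression into exactly $(\phi\leftarrow\lambda)(a^{(1)})\otimes(\phi\leftarrow\lambda)(a^{(2)})$; the counit compatibility then follows from the axiom $(\varepsilon_A\otimes Id)\circ\rho=\varepsilon_A(\cdot)1_B$ together with $\lambda(1_B)=1$. Hence $\phi\leftarrow\lambda\in E_{A\rightarrow H}$. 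That $\leftarrow$ is a right action of $(M_B,*)$ then repeats the computation of the previous proposition verbatim: associativity uses the coassociativity $(\rho\otimes Id)\circ\rho=(Id\otimes\Delta_B)\circ\rho$ of the coaction, while $\phi\leftarrow\varepsilon_B=\phi$ uses the counit axiom $(Id\otimes\varepsilon_B)\circ\rho=Id$.

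Part (2) is immediate: since $\lambda(b)$ is a central scalar and $\theta$ is linear and multiplicative, $\theta\circ(\phi\otimes\lambda)=(\theta\circ\phi)\otimes\lambda$ as maps $A\otimes B\to H_2$, and precomposing with $\rho$ gives $\theta\circ(\phi\leftarrow\lambda)=(\theta\circ\phi)\leftarrow\lambda$.

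For part (3), I specialize $H=A$ and $\phi=Id_A$ in part (1) to obtain the endomorphisms $Id\leftarrow\lambda\in E_{A\rightarrow A}$. The composition formula is a computation distinct from the action axiom: from $(Id\leftarrow\lambda)\circ(Id\leftarrow\mu)(a)=(a_1)_1\,\lambda((a_1)_0)\,\mu(a_0)$, I apply $Id_A\otimes\lambda\otimes\mu$ to the coassociativity identity $(a_1)_1\otimes(a_1)_0\otimes a_0=a_1\otimes(a_0)^{(1)}\otimes(a_0)^{(2)}$ and obtain $a_1(\lambda*\mu)(a_0)=(Id\leftarrow(\lambda*\mu))(a)$. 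Combined with $Id\leftarrow\varepsilon_B=Id$, this shows $\lambda\mapsto Id\leftarrow\lambda$ is a monoid morphism into $(E_{A\rightarrow A},\circ)$. The one delicate point is injectivity, which I expect to be the main obstacle: the naive idea of applying $\varepsilon_A$ to the output fails, since the axiom $(\varepsilon_A\otimes Id)\circ\rho=\varepsilon_A(\cdot)1_B$ makes exactly that information collapse. Instead I would recover $\lambda$ from the endomorphism degree by degree: in the graded connected setting of the applications the counit axiom forces $\rho(a)=a\otimes 1_B+(\text{terms of lower }A\text{-degree})$, so that $(Id\leftarrow\lambda-Id\leftarrow\mu)(a)=\sum_{\deg a_0>0}a_1\,(\lambda-\mu)(a_0)$, and an induction on the degree, using that the maps extracting the positive $B$-part of $\rho$ are surjective onto each homogeneous component of $B$, forces $\lambda=\mu$. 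This is the only place where the abstract counit conditions do not suffice and the faithfulness of the coaction must be invoked.
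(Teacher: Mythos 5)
Parts 1 and 2 of your proposal are correct and essentially the paper's own proof: the same verification that $\phi\leftarrow\lambda$ is an algebra morphism, the same use of the cointeraction identity $(a^{(1)})_1\otimes(a^{(2)})_1\otimes(a^{(1)})_0(a^{(2)})_0=(a_1)^{(1)}\otimes(a_1)^{(2)}\otimes a_0$ for the coalgebra compatibility, and the same use of the coassociativity and counit axioms of the coaction for the action property. Your direct computation of $(Id\leftarrow\lambda)\circ(Id\leftarrow\mu)=Id\leftarrow(\lambda*\mu)$ is also fine; the paper gets it in one line by applying part 2 with $\theta=Id\leftarrow\lambda$ and then the action axiom, but the underlying computation is the same.

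The genuine gap is injectivity, and your fallback argument would fail as sketched. The paper's device, which you miss, is to compose on the left with $\varepsilon'$, the counit of $B$: in the settings where the proposition is applied, the coaction comes from the second coproduct on $A$ (e.g.\ $\rho=\delta$ on $\h_\qp$, or $\rho=(Id\otimes\isoclass{})\circ\delta$ on $\h_\QP$), so $\varepsilon'$ is an algebra character of $A$ with $(\varepsilon'\otimes Id)\circ\rho=Id$, and therefore $\varepsilon'\circ(Id\leftarrow\lambda)=(\varepsilon'\otimes\lambda)\circ\rho=\varepsilon'*\lambda=\lambda$ (for $\h_\QP$ one gets $\lambda\circ\isoclass{}$ and concludes by surjectivity of $\isoclass{}$). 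Hence $Id\leftarrow\lambda=Id\leftarrow\mu$ forces $\lambda=\mu$ in one line, with no grading and no induction. (The paper only checks the fibre over $Id$, i.e.\ $Id\leftarrow\lambda=Id\Rightarrow\lambda=\varepsilon'$; since $M_B$ is a monoid and not a group this is strictly weaker than injectivity, but the same composition gives the full statement.)

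By contrast, your inductive scheme rests on a structural claim that is false in the paper's motivating example: the comodule counit axiom $(Id\otimes\varepsilon_B)\circ\rho=Id$ does \emph{not} force $\rho(a)=a\otimes 1_B+(\text{terms of lower degree})$. In $\h_\qp$ one has $\delta(\tdun{$n$})=\tdun{$n$}\otimes\tdun{$n$}$, and more generally every term $P/\sim\otimes P|\sim$ of $\delta(P)$ satisfies $cl(P|\sim)=cl(P)$: the "identity component" of the coaction pairs $a$ with group-like elements of $B$, never with $1_B$, and there are no lower-degree terms in the second leg to induct away. Your instinct that the abstract axioms of definition \ref{defi1} alone cannot suffice is nevertheless sound — the trivial coaction $\rho(a)=a\otimes 1_B$ satisfies all of them and makes $\lambda\mapsto Id\leftarrow\lambda$ constant — but the missing ingredient the paper actually exploits is the compatibility $(\varepsilon'\otimes Id)\circ\rho=Id$ of the coaction with a counit on $A$, not a faithfulness-by-grading argument.
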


\begin{proof} 1. For all $\phi\in E_{A\leftarrow B}$, $\lambda\in M_B$,
$\phi\leftarrow \lambda:A\longrightarrow H\otimes \K=H$. As $\phi$, $\lambda$ and $\rho$ are algebra morphisms, 
by composition $\phi\leftarrow \lambda$ is an algebra morphism. Let $a\in A$.
\begin{align*}
\Delta_H(\phi\leftarrow \lambda(a))&=\Delta_H(\phi(a_0)\lambda(a_1))\\
&=\lambda(a_1)\Delta_H\circ \phi(a_1)\\
&=\lambda(a_1)\phi(a_0)^{(1)} \otimes \phi(a_0)^{(2)}\\
&=\lambda(a_1)\phi((a_0)^{(1)})\otimes \phi((a_0)^{(2)})\\
&=\lambda((a^{(1)})_1 (a^{(2)})_1) \phi((a^{(1)})_0)\otimes \phi((a^{(2)})_0)\\
&=\lambda((a^{(1)})_1) \lambda((a^{(2)})_1) \phi((a^{(1)})_0)\otimes \phi((a^{(2)})_0)\\
&=\phi((a^{(1)})_0)\lambda((a^{(1)})_1)\otimes \phi((a^{(2)})_0) \lambda((a^{(2)})_1)\\
&=\phi\leftarrow \lambda(a^{(1)})\otimes \phi\leftarrow \lambda(a^{(2)})\\
&=((\phi\leftarrow \lambda)\otimes(\phi\leftarrow \lambda))\circ \Delta_A(a).
\end{align*}
So $\phi\leftarrow \lambda \in E_{A\rightarrow H}$.  

Let $\phi \in E_{A\rightarrow H}$. For all $a\in A$, $\phi\leftarrow  \eta\circ \varepsilon(a)=\phi(a_0)\varepsilon(a_1)=\phi(a)$. 
Let $\lambda,\mu \in M_B$.
\begin{align*}
(\phi\leftarrow \lambda)\leftarrow \mu&=(\phi \otimes \lambda \otimes \mu)\circ (\rho\otimes Id)\circ \rho
=(\phi \otimes \lambda \otimes \mu)\circ (Id\otimes\Delta_B)\circ \rho=\phi\leftarrow (\lambda*\mu).
\end{align*}
So $\leftarrow $ is indeed an action of $M_B$ on $E_{A\rightarrow H}$.\\

2. Let $a\in H$.
\begin{align*}
(\theta \circ \phi)\leftarrow \lambda(a)&=\theta \circ \phi(a_1) \lambda(a_0)=\theta(\phi(a_1)\lambda(a_0))=\theta(\phi\leftarrow \lambda(a))
=\theta\circ (\phi\leftarrow \lambda)(a).
\end{align*}
So $(\theta \circ \phi)\leftarrow \lambda=\theta \circ (\phi\leftarrow \lambda)$. \\

3. Consequently, if $\lambda,\mu \in M_B$, in $E_{A\rightarrow A}$:
$$(Id \leftarrow \lambda) \circ (Id \leftarrow \lambda)=(Id \leftarrow \lambda)\leftarrow \mu)=Id \leftarrow (\lambda *\mu).$$
If $Id \leftarrow \lambda=Id$, then, composing by $\varepsilon'$, we obtain $\varepsilon'*\lambda=\varepsilon'$, so $\lambda=\varepsilon'$. \end{proof}\\

\textbf{Example.} We take $A=(\K[X],m,\Delta)$, $B=(\K[X],m,\delta)$ and $\rho=\delta$. In $E_{A\longrightarrow A}$, for any $\lambda \in \K$:
$$Id\leftarrow ev_\lambda(X)=ev_\lambda(X)X=\lambda X,$$
so for any $P\in \K[X]$, $(Id\leftarrow ev_\lambda)(P)=P(\lambda X)$.
In this case, the monoids $(M_B,*)$ and $(E_{A\rightarrow A},\circ)$ are isomorphic.

\subsection{Polynomial morphisms}

\label{sect1-3}

In this section, we deal with a family $(A,m,\Delta,\delta)$ such that:
\begin{enumerate}
\item $(A,m,\Delta)$ is a graded, connected Hopf algebra. As a graded algebra, it is isomorphic to the symmetric algebra $S(V)$,
where $V$ is a graded subspace of $A$. 
\item $(A,m,\delta)$ is a bialgebra.
\item $(A,m,\Delta)$ and $(A,m,\delta)$ are in cointeraction, through the coaction $\delta$.
\item $V_1=A_1$ has a basis $(g_i)_{i\in I}$ such that:
$$\forall i \in I,\:\delta(g_i)=g_i\otimes g_i.$$
We shall denote by $J$ the set of sequences  $\alpha=(\alpha_i)_{i\in I}$ with a finite support. For all $\alpha \in J$,
we put $g_\alpha=\prod_{i\in I} g_i^{\alpha_i}$. These are group-like elements of $(A,m,\delta)$.
\item For all $n\geq 2$, $V_n$ can be decomposed as:
$$V_n=\bigoplus_{i\in I,\alpha \in J} V_n(g_i,g_\alpha),$$
such that for all $x\in V_n(g_i,g_\alpha)$:
$$\delta(x)-g_i\otimes x-x\otimes g_\alpha \in S(V_1\oplus\ldots \oplus V_{n-1})^{\otimes 2}.$$
\end{enumerate}
The counit of $(A,m,\Delta)$ will be denoted by $\varepsilon$ and the counit of $(A,m,\delta)$ by $\varepsilon'$.
We denote by $M_B$ the monoid of characters of $(A,m,\delta)$.\\

\textbf{Remark.} If $x\in V_n(g_i,g_\alpha)$, as $\varepsilon'(g_i)=\varepsilon'(g_\alpha)=1$, necessarily, $\varepsilon'(x)=0$ and:
$$\delta(x)-g_i\otimes x-x\otimes g_\alpha\in Ker(\varepsilon')^{\otimes 2}.$$

\begin{lemma} \label{lem5}
Let $\lambda \in M_B$. It has an inverse in $M_B$ if, and only if, for all $i\in I$, $\lambda(g_i)\neq 0$.
\end{lemma}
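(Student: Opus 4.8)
The plan is to prove the two implications separately, the necessity being immediate and the sufficiency requiring an inductive construction of the inverse that exploits the triangular structure of the fifth hypothesis.

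For necessity, suppose $\lambda$ admits an inverse $\mu\in M_B$, so that $\lambda*\mu=\varepsilon'$. Since $\delta(g_i)=g_i\otimes g_i$ by the fourth hypothesis, evaluating at $g_i$ gives $1=\varepsilon'(g_i)=(\lambda*\mu)(g_i)=\lambda(g_i)\mu(g_i)$, which forces $\lambda(g_i)\neq 0$ for every $i\in I$. This is the easy direction.

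For sufficiency, assume $\lambda(g_i)\neq 0$ for all $i\in I$, and build a candidate inverse $\mu$ as an algebra morphism $A\to\K$. Since $A\cong S(V)$ as algebras, $\mu$ is determined by its values on the generators $V=\bigoplus_n V_n$, which I define by induction on $n$; multiplicativity then extends it to all of $A$, and $\mu(1_A)=1$ makes $\mu\in M_B$. The key reduction is that, because $(A,m,\delta)$ is a bialgebra, $\lambda*\mu$ is again a character, and two characters that agree on the generating space $V$ coincide on all of $A$; hence it suffices to arrange $(\lambda*\mu)(x)=\varepsilon'(x)$ only for $x\in V$. On $V_1$ I set $\mu(g_i)=\lambda(g_i)^{-1}$, so that $(\lambda*\mu)(g_i)=\lambda(g_i)\mu(g_i)=1=\varepsilon'(g_i)$. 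For $n\geq 2$ and $x\in V_n(g_i,g_\alpha)$, the fifth hypothesis gives $\delta(x)=g_i\otimes x+x\otimes g_\alpha+r(x)$ with $r(x)\in S(V_1\oplus\cdots\oplus V_{n-1})^{\otimes 2}$; applying $\lambda\otimes\mu$ yields $(\lambda*\mu)(x)=\lambda(g_i)\mu(x)+\lambda(x)\mu(g_\alpha)+(\lambda\otimes\mu)(r(x))$, where the last two terms involve only $\mu$ evaluated on generators of degree $<n$ (recall $\mu(g_\alpha)=\prod_j\mu(g_j)^{\alpha_j}$ is already known). Since $\varepsilon'(x)=0$ by the remark, I solve $\mu(x)=-\lambda(g_i)^{-1}\bigl(\lambda(x)\mu(g_\alpha)+(\lambda\otimes\mu)(r(x))\bigr)$, which is well defined precisely because $\lambda(g_i)\neq 0$, and which depends linearly on $x$; this defines $\mu$ on $V_n$ and closes the induction, giving $\lambda*\mu=\varepsilon'$.

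It remains to upgrade this right inverse to a genuine two-sided inverse. The identical construction, this time solving $(\mu'*\lambda)(x)=\varepsilon'(x)$, produces a left inverse $\mu'$; the only change is that one now divides by $\lambda(g_\alpha)=\prod_j\lambda(g_j)^{\alpha_j}$, which is nonzero exactly because each $\lambda(g_j)\neq 0$. The standard monoid computation $\mu'=\mu'*\varepsilon'=\mu'*(\lambda*\mu)=(\mu'*\lambda)*\mu=\varepsilon'*\mu=\mu$ then shows $\mu=\mu'$ is the inverse of $\lambda$ in $M_B$. The main obstacle is entirely conceptual rather than computational: one must recognize that the fifth hypothesis makes $\delta(x)$ triangular, in the sense that $g_i\otimes x$ and $x\otimes g_\alpha$ are the only terms carrying a degree-$n$ tensor factor, so that the recursion for $\mu(x)$ closes with leading coefficient $\lambda(g_i)$ — invertible precisely under the stated hypothesis.
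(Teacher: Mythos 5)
Your proof is correct and follows essentially the same route as the paper: the necessity direction via $\lambda(g_i)\mu(g_i)=\varepsilon'(g_i)=1$, and the sufficiency via an inductive construction on $V_n$ exploiting the triangularity $\delta(x)=g_i\otimes x+x\otimes g_\alpha+r(x)$ with $r(x)\in S(V_1\oplus\cdots\oplus V_{n-1})^{\otimes 2}$, the unknown appearing with invertible coefficient $\lambda(g_i)$ (resp.\ $\lambda(g_\alpha)$), followed by the standard associativity argument identifying the left and right inverses. The only cosmetic difference is that the paper builds the two one-sided inverses $\mu$ and $\nu$ in a single simultaneous induction, whereas you run two separate but identical inductions; the content is the same.
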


\begin{proof} $\Longrightarrow$. Let $\mu$ be the inverse of $\lambda$ in $M_B$. For all $i\in I$,
$\lambda*\mu(g_i)=\lambda(g_i)\mu(g_i)=\varepsilon'(g_i)=1$, so $\lambda(g_i)\neq 0$. \\

$\Longleftarrow$. We define two characters $\mu,\nu \in M_B$ by inductively definining $\mu_n=\mu_{\mid V_n}$
and $\nu_n=\nu_{\mid V_n}$. For $n=1$, we put $\mu_1(g_i)=\nu_1(g_i)=\lambda(g_i)^{-1}$. 
Let us assume that $g_1,\ldots,g_{n-1}$ are already defined, with $n\geq 2$. If $x\in V_n(g_i,g_\alpha)$, we put:
$$\delta(x)-g_i\otimes x-x\otimes g_\alpha=\sum x'_k\otimes x''_k \in S(V_1\oplus\ldots \oplus V_{n-1})^{\otimes 2}.$$
Hence, for all $k$, $\mu(x'_k)$ and $\nu(x''_k)$ are defined. We put:
\begin{align*}
\mu_n(x)&=\prod_{i\in I}\frac{1}{\lambda(g_i)^{\alpha_i}}
\left(\varepsilon'(x)-\mu(g_i)\lambda(x)-\sum \mu(x'_k)\lambda(x''_k)\right),\\
\nu_n(x)&=\frac{1}{\lambda(g_i)}\left(\varepsilon'(x)-\lambda(x)\nu(g_\alpha)-\sum \lambda(x'_k)\nu(x''_k)\right).
\end{align*}
Consequently, $\mu,\nu \in M_B$ and for all $x\in V$, $\mu*\lambda(x)=\lambda*\nu(x)=\varepsilon'(x)$,
so $\mu*\lambda=\lambda*\nu=\varepsilon'$, and finally $\mu=\mu*(\lambda*\nu)=(\mu*\lambda)*\nu=\nu$,
so $\lambda$ is invertible in $M_B$. \end{proof}

\begin{lemma} \label{lem6}
Let $C$ be a graded, connected Hopf algebra. We denote by $C_+=Ker(\varepsilon_C)$ its augmentation ideal.
Let $\lambda:C_+\longrightarrow\K$ be any linear map. There exists a unique coalgebra morphism
$\phi:C\longrightarrow \K[X]$ such that:
$$\forall x\in C_+,\:\frac{d\phi(x)}{dX}(0)=\lambda(x).$$
Moreover:
\begin{enumerate}
\item $\phi$ is homogeneous if, and only if, for all $n\geq 2$, $\lambda(C_n)=(0)$.
\item $\phi$ is a Hopf algebra morphism if, and only if, for all $x,y\in C_+$, $\lambda(xy)=0$.
\end{enumerate}\end{lemma}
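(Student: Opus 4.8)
The plan is to exploit the coalgebra structure of $\K[X]$ in which $X$ is primitive, so that $\Delta_{\K[X]}(X^n)=\sum_{i+j=n}\binom{n}{i}X^i\otimes X^j$ and the counit is evaluation at $0$. Writing a putative $\phi$ as $\phi(x)=\sum_{n\geq 0}\phi_n(x)\,X^n$ with linear forms $\phi_n\in\operatorname{Hom}(C,\K)$, I translate the two defining conditions of a coalgebra morphism into relations among the $\phi_n$. The counit condition $\varepsilon_{\K[X]}\circ\phi=\varepsilon_C$ is exactly $\phi_0=\varepsilon_C$, and identifying the coefficient of $X^i\otimes X^j$ in $\Delta_{\K[X]}\circ\phi=(\phi\otimes\phi)\circ\Delta_C$ gives, for all $i,j\geq 0$,
$$\phi_i\star\phi_j=\binom{i+j}{i}\,\phi_{i+j},$$
where $\star$ is the convolution product of $\operatorname{Hom}(C,\K)$, whose unit is $\varepsilon_C$.

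The case $i=1$, $j=n-1$ reads $n\,\phi_n=\phi_1\star\phi_{n-1}$, so by induction $\phi_n=\frac{1}{n!}\phi_1^{\star n}$; conversely this formula satisfies every relation, since associativity gives $\phi_1^{\star i}\star\phi_1^{\star j}=\phi_1^{\star(i+j)}$. Hence a coalgebra morphism is entirely determined by the single form $\phi_1$. Applying comultiplicativity to the grouplike element $1$ shows $\phi(1)$ is grouplike with counit $1$, hence equal to the only such element $1$ of $\K[X]$; therefore $\phi_1(1)=0$, so $\phi_1$ is determined by its restriction to $C_+$, which the derivative condition forces to equal $\lambda$. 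This proves uniqueness. For existence I set $\phi_1=\lambda$ on $C_+$ and $\phi_1=0$ on $C_0=\K\,1$, and define $\phi(x)=\sum_{n\geq 0}\frac{1}{n!}\phi_1^{\star n}(x)\,X^n$. Because $\phi_1$ vanishes on $C_0$ and $C$ is graded and connected, $\phi_1^{\star n}(x)=0$ as soon as $n$ exceeds the degree of $x$, so each $\phi(x)$ is a genuine polynomial; the relations above, checked directly, show $\phi$ is a coalgebra morphism with $\frac{d\phi(x)}{dX}(0)=\phi_1(x)=\lambda(x)$ on $C_+$.

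For the first ``moreover'', I observe that $\phi_1^{\star n}$ is always supported in degrees $\geq n$ (as $\phi_1$ kills $C_0$). If $\lambda(C_m)=0$ for all $m\geq 2$, then $\phi_1$ is supported on $C_1$, whence $\phi_1^{\star n}$ is supported exactly on $C_n$ and $\phi(C_n)\subseteq\K X^n$, so $\phi$ is homogeneous. Conversely, homogeneity forces the coefficient of $X^1$ to vanish on $C_n$ for $n\neq 1$, that is $\phi_1(C_n)=\lambda(C_n)=0$ for $n\geq 2$.

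For the second ``moreover'', I reduce multiplicativity of $\phi$ to that of the $\K$-valued maps $\mathrm{ev}_t\circ\phi$, $t\in\K$: since evaluations separate polynomials, $\phi$ is an algebra morphism if and only if every $\mathrm{ev}_t\circ\phi$ is a character of $C$. Here $\mathrm{ev}_t\circ\phi=\sum_{n\geq 0}\frac{t^n}{n!}\phi_1^{\star n}$ is the convolution exponential $\exp^\star(t\,\phi_1)$, well defined because $C$ is graded connected; by the classical bijection between infinitesimal characters and characters of a connected graded Hopf algebra, all these are characters if and only if $\phi_1$ is an infinitesimal character, i.e. $\phi_1(xy)=\varepsilon(x)\phi_1(y)+\phi_1(x)\varepsilon(y)$. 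Testing this identity on the decomposition $C=\K\,1\oplus C_+$ reduces it exactly to $\lambda(xy)=0$ for all $x,y\in C_+$. The main obstacle is the last point: justifying the infinitesimal-character/character correspondence cleanly; if one prefers to avoid it, the equivalence can instead be obtained by a direct induction on the degree, propagating $\lambda(xy)=0$ to the identity $\phi(xy)=\phi(x)\phi(y)$.
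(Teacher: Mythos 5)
Your proof is correct, and it takes a genuinely different route from the paper's. The paper constructs $\phi$ degree by degree: for $x\in C_n$ it forms $y=(\phi\otimes\phi)\circ\tilde{\Delta}(x)$, uses coassociativity to show that the coefficient array $a_{i,j}$ of $y$ in the basis $\frac{X^i}{i!}\otimes\frac{X^j}{j!}$ depends only on $i+j$, so that $y=\tilde{\Delta}\left(\sum a_n\frac{X^n}{n!}\right)$, and then fixes the remaining coefficient of $X$ by $\lambda$; uniqueness follows from $Ker(\tilde{\Delta})=Vect(X)$, and assertion 2 is obtained by the neat trick of applying this uniqueness on $C\otimes C$ to the two coalgebra morphisms $m\circ(\phi\otimes\phi)$ and $\phi\circ m$ -- everything stays elementary, with no convolution calculus. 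You instead solve the problem in closed form: expanding $\phi(x)=\sum\phi_n(x)X^n$, the coalgebra-morphism conditions become $\phi_0=\varepsilon_C$ and $\phi_i\star\phi_j=\binom{i+j}{i}\phi_{i+j}$, forcing $\phi_n=\frac{1}{n!}\phi_1^{\star n}$, i.e.\ $ev_t\circ\phi=\exp^\star(t\,\phi_1)$, with conilpotence of the graded connected $C$ guaranteeing local finiteness (your verification that $\phi_1(1)=0$, needed both for uniqueness and for the infinitesimal-character computation, is in order). What each approach buys: your exponential formula makes uniqueness and the homogeneity criterion immediate, explains conceptually why coalgebra morphisms into the binomial coalgebra $\K[X]$ are parametrized by single linear forms, and essentially delivers theorem \ref{theo8} of the paper as a by-product; the paper's inductive argument, by contrast, never needs the one nontrivial input you invoke, namely that $\exp^\star$ of an infinitesimal character is a character. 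You correctly flagged that point as the only real debt; note that the converse direction of assertion 2 is elementary in both treatments (the coefficient of $X$ in $\phi(x)\phi(y)\in X^2\K[X]$ vanishes, or equivalently differentiate $\phi(xy)(t)=\phi(x)(t)\phi(y)(t)$ at $t=0$), and the forward direction can be closed either by your proposed induction on degree or by the standard computation comparing $\exp^\star(t\,\phi_1\circ m)$ with $\exp^\star\bigl(t(\phi_1\otimes\varepsilon+\varepsilon\otimes\phi_1)\bigr)$ in $Hom(C\otimes C,\K)$, so the gap is one of citation rather than substance.
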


\begin{proof} Let $\pi:\K[X]\longrightarrow Vect(X)$ be the canonical projection. For any $P\in \K[X]$:
$$\pi(P)=\frac{dP}{dX}(0)X.$$

\textit{Existence}. We define $\phi_{\mid C_n}$ by induction on $n$. For $n=0$, we put $\phi(1)=1$.
Let us assume that $\phi_{\mid C_0\oplus \ldots \oplus C_{n-1}}$ is defined such that for all $x\in C_0\oplus \ldots \oplus C_{n-1}$,
\begin{align*}
\Delta\circ \phi(x)&=(\phi\otimes \phi)\circ \Delta(x),&\pi \circ \phi(x)&=\lambda(x)X.
\end{align*}

Let $x\in C_n$. As $\tdelta(x)\in (C_1\oplus \ldots \oplus C_{n-1})^{\otimes 2}$, we can consider the element
$$y=(\phi \otimes \phi)\circ \tdelta(x)\in \K[X]_+^{\otimes 2}.$$
We put:
$$y=\sum_{i,j\geq 1} a_{i,j} \frac{X^i}{i!}\otimes \frac{X^j}{j!}.$$
Moreover:
\begin{align*}
\tdelta\otimes Id(y)&=((\tdelta \circ \phi)\otimes \phi)\circ \tdelta(x)\\
&=(\phi\otimes \phi\otimes \phi)\circ (\tdelta \otimes Id)\circ \tdelta(x)\\
&=(\phi\otimes \phi\otimes \phi)\circ (Id \otimes \tdelta)\circ \tdelta(x)\\
&=(\phi \otimes (\tdelta \circ \phi))\circ \tdelta(x)\\
&=(Id \otimes \tdelta)(y).
\end{align*}
Hence:
\begin{align*}
\sum_{i,j,k\geq 1} a_{i+j,k}\frac{X^i}{i!}\otimes \frac{X^j}{j!}\otimes\frac{X^k}{k!}
&=\sum_{i,j,k\geq 1} a_{ij+k}\frac{X^i}{i!}\otimes \frac{X^j}{j!}\otimes\frac{X^k}{k!}.
\end{align*}
For all $i,j,k\geq 1$, $a_{i+j,k}=a_{i,j+k}$, so there exist scalars $a_n$ such that for all $i,j\geq 1$, $a_{i,j}=a_{i+j}$. We obtain that:
\begin{align*}
y&=\sum_{n\geq 2} a_n \left(\sum_{i,j \geq 1,i+j=n} \frac{X^i}{i!}\otimes \frac{X^j}{j!}\right)=
\sum_{n\geq 2}a_n \tdelta\left(\frac{X^n}{n!}\right).
\end{align*}
We then put $\displaystyle \phi(x)=\sum_{n\geq 2}a_n\frac{X^n}{n!}+\lambda(x)X$.

We obtain in this way a coalgebra morphism such that $\pi\circ \phi(x)=\lambda(x)X$ for all $x\in C$.\\

\textit{Unicity}. Let $\phi,\psi:C\longrightarrow \K[X]$ be coalgebra morphisms such that $\pi\circ \phi=\pi\circ \psi$.
Let us prove that $\phi(x)=\psi(x)$ for all $x\in A_n$, $n\geq 0$, by induction on $n$. For $n=0$,
as the unique group-like element of $\K[X]$ is $1$, $\phi(1)=\psi(1)=1$. Let us assume the result at all ranks $k<n$.
Then, if $x\in A_n$:
\begin{align*}
\tdelta \circ \phi(x)&=(\phi\otimes \phi)\circ \tdelta(x)=(\psi\otimes \psi)\circ \tdelta(x)=\tdelta \circ \psi(x),
\end{align*}
so $\phi(x)-\psi(x)\in Ker(\tdelta)=Vect(X)$. Hence, $0=\pi\circ \phi(x)-\pi\circ \psi(x)=\phi(x)-\psi(x)$.\\

$1.\Longrightarrow$. If $\phi$ is homogeneous, then, for all $n\geq 2$, $\phi(C_n)\subseteq Vect(X^n)$, so:
$$\pi\circ \phi(C_n)=\lambda(C_n)X=(0).$$

$1.\Longleftarrow$. Let us go back to the construction of $\phi$ is the \textit{Existence} part.
If $n\geq 1$, $x\in C_1$, then $\phi(x)=\lambda(x)X$ is homogeneous of degree $1$. 
If $n\geq 2$, then, by homogeneity, $a_{i,j}=0$ if $i+j\neq n$, so $a_k=0$ if $n\neq k$, and
$\displaystyle \phi(x)=a_n\frac{X^n}{n!}$: $\phi$ is homogeneous.\\

$2.\Longrightarrow$. Let $x,y\in C_+$. Then $\phi(x),\phi(y)\in \K[X]_+=X\K[X]$, so $\phi(xy)=\phi(x)\phi(y)\in X^2\K[X]$
and $\pi\circ \phi(xy)=\lambda(xy)X=0$.\\

$2.\Longleftarrow$. Let us consider $\phi_1=m\circ(\phi\otimes \phi)$ and $\phi_2=\phi \circ m$. By composition,
they are both coalgebra morphisms from the graded bialgebra $C\otimes C$ to $\K[X]$. Moreover, if $x,y\in C_+$:
\begin{align*}
\pi\circ \phi_1(1\otimes y)&=\pi\circ \phi(y),&
\pi\circ \phi_2(1\otimes y)&=\pi\circ \phi(y),\\
\pi\circ \phi_1(x\otimes 1)&=\pi\circ \phi(x),&
\pi\circ \phi_2(x\otimes 1)&=\pi\circ \phi(x),\\
\pi \circ \phi_1(x\otimes y)&=\pi(\phi(x)\phi(y))=0,&
\pi\circ \phi_2(x\otimes y)&=\lambda(xy)X=0.
\end{align*}
So, for all $z\in (C\otimes C)_+=(\K1\otimes C_+)\oplus (C_+\otimes \K1)\oplus(C_+\otimes C_+)$,
$\pi \circ \phi_1(z)=\pi \circ \phi_2(z)$. By the \textit{Unicity} part, $\phi_1=\phi_2$, so $\phi$ is an algebra morphism. \end{proof}\\

\textbf{Remark.} If $x\in V_1$, $\phi(x)=\lambda(x)X$.

\begin{theo} \label{theo7}
Under the hypotheses 1--5, there exists a unique homogeneous Hopf algebra morphism $\phi_0:(A,m,\Delta)\longrightarrow(\K[X],m,\Delta)$
such that:
$$\forall x\in A_1,\:\phi_0(x)=\varepsilon'(x)X.$$
Moreover, there exists a unique character $\lambda_0 \in M_B$, invertible in $M_B$, such that:
$$\forall n\geq 0,\: \forall x\in A_n, \:\phi_0(x)=\lambda_0(x)X^n.$$
\end{theo}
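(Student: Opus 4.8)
The plan is to deduce both assertions from Lemma \ref{lem6} and Lemma \ref{lem5}, the heavy lifting having already been done there.

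First I would construct $\phi_0$. Let $A_+=Ker(\varepsilon)=\bigoplus_{n\geq 1}A_n$ be the augmentation ideal of $(A,m,\Delta)$, and define a linear map $\lambda:A_+\longrightarrow \K$ by $\lambda(x)=\varepsilon'(x)$ for $x\in A_1$ and $\lambda(x)=0$ for $x\in A_n$ with $n\geq 2$. Since $(A,m,\Delta)$ is a graded connected Hopf algebra, Lemma \ref{lem6} produces a unique coalgebra morphism $\phi_0:A\longrightarrow \K[X]$ with $\frac{d\phi_0(x)}{dX}(0)=\lambda(x)$ for all $x\in A_+$. By construction $\lambda(A_n)=(0)$ for every $n\geq 2$, so $\phi_0$ is homogeneous by part 1 of Lemma \ref{lem6}. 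Moreover, if $x,y\in A_+$ then $xy$ is a sum of homogeneous elements of degree $\geq 2$, whence $\lambda(xy)=0$; by part 2 of Lemma \ref{lem6}, $\phi_0$ is a Hopf algebra morphism. The remark following Lemma \ref{lem6} gives $\phi_0(x)=\lambda(x)X=\varepsilon'(x)X$ for $x\in A_1=V_1$, as required. For uniqueness, any homogeneous Hopf algebra morphism with $\phi_0(x)=\varepsilon'(x)X$ on $A_1$ satisfies $\frac{d\phi_0(x)}{dX}(0)=\varepsilon'(x)$ on $A_1$ and, by homogeneity, $\frac{d\phi_0(x)}{dX}(0)=0$ on $A_n$ for $n\geq 2$; its associated datum is exactly $\lambda$, and the uniqueness clause of Lemma \ref{lem6} (a Hopf morphism being a fortiori a coalgebra morphism) forces it to coincide with the morphism just built.

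Next I would extract $\lambda_0$ from $\phi_0$. As $\phi_0$ is homogeneous, for $x\in A_n$ we have $\phi_0(x)\in Vect(X^n)$, so there is a unique scalar $\lambda_0(x)$ with $\phi_0(x)=\lambda_0(x)X^n$; extending linearly across the grading defines $\lambda_0:A\longrightarrow \K$. To see $\lambda_0\in M_B$, that is, that it is an algebra morphism for $m$, take homogeneous $x\in A_p$, $y\in A_q$; since $\phi_0$ is an algebra morphism, $\lambda_0(xy)X^{p+q}=\phi_0(xy)=\phi_0(x)\phi_0(y)=\lambda_0(x)\lambda_0(y)X^{p+q}$, so $\lambda_0(xy)=\lambda_0(x)\lambda_0(y)$; together with $\phi_0(1)=1$, giving $\lambda_0(1)=1$, this shows $\lambda_0$ is a character of $(A,m,\delta)$. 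Uniqueness of $\lambda_0$ is automatic, as any character satisfying $\phi_0(x)=\lambda_0(x)X^n$ on each $A_n$ is determined on every homogeneous component. For invertibility I would invoke Lemma \ref{lem5}: it suffices that $\lambda_0(g_i)\neq 0$ for all $i\in I$. The $g_i$ form a basis of $V_1=A_1$, so $\phi_0(g_i)=\varepsilon'(g_i)X$ and hence $\lambda_0(g_i)=\varepsilon'(g_i)$; and by hypothesis 4, $\delta(g_i)=g_i\otimes g_i$, so applying $(\varepsilon'\otimes Id)$ yields $\varepsilon'(g_i)g_i=g_i$ with $g_i\neq 0$, forcing $\varepsilon'(g_i)=1\neq 0$. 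Lemma \ref{lem5} then gives the invertibility of $\lambda_0$ in $M_B$.

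I would expect no serious obstacle: the whole argument is a matter of feeding the correct datum $\lambda$ into Lemma \ref{lem6} and of tracking degrees so that the algebra-morphism property of $\phi_0$ becomes the multiplicativity of $\lambda_0$. The only two points needing a moment of care are the well-definedness of $\lambda_0$, which rests squarely on the homogeneity of $\phi_0$, and the counit computation $\varepsilon'(g_i)=1$ for the $\delta$-group-like generators $g_i$, which supplies the nonvanishing required to apply Lemma \ref{lem5}.
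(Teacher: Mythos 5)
Your proposal is correct and follows essentially the same route as the paper: the same auxiliary functional $\lambda$ (equal to $\varepsilon'$ on $A_1$ and $0$ in higher degrees) fed into Lemma \ref{lem6}, homogeneity and multiplicativity read off from its two numbered points via $A_+^2\subseteq A_{\geq 2}$, and invertibility of $\lambda_0$ obtained from Lemma \ref{lem5} through $\lambda_0(g_i)=\varepsilon'(g_i)=1$. Your only addition is to spell out that $\varepsilon'(g_i)=1$ by applying $\varepsilon'\otimes Id$ to $\delta(g_i)=g_i\otimes g_i$, a fact the paper records in the remark preceding Lemma \ref{lem5}; this is a harmless and welcome piece of bookkeeping.
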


\begin{proof}
\textit{Existence.} Let $\lambda \in A^*$ such that $\lambda(x)=\varepsilon'(x)$ if $x\in A_1$ and $\lambda(x)=0$ if $x\in A_n$, $n\geq 2$. 
We denote by $\phi_0$ the unique coalgebra morphism such that $\displaystyle \frac{d\phi_0(x)}{dX}(0)=\lambda(x)$ for all $x\in A_+$.
By the first point of lemma \ref{lem6}, $\phi_0$ is homogeneous. As $A_+^2\subseteq A_{\geq 2}$,
by the second point of lemma \ref{lem6}, $\phi_0$ is an algebra morphism.\\

\textit{Unicity.} If $\phi$ is such a morphism, by the first point of lemma \ref{lem6}, for all $x\in A_n$, $n\geq 2$,
$\pi\circ \phi(x)=0$; hence, for all $x\in A_+$, $\pi\circ \phi_0=\pi\circ \phi$. By the unicity in lemma \ref{lem6}, $\phi=\phi_0$.\\

By homogeneity of $\phi_0$, for all $x\in A_n$, there exists a unique scalar $\lambda_0(x)\in \K$ such that:
$$\phi_0(x)=\lambda_0(x)X^n.$$
If $x\in A_m$, $y\in A_n$, $xy\in A_{m+n}$ and then:
$$\phi(xy)=\lambda_0(xy)X^{n+m}=\phi(x)\phi(y)=\lambda_0(x)\lambda_0(y)X^{n+m},$$
so $\lambda_0 \in M_B$. For all $i\in I$, $\phi(g_i)=\varepsilon'(g_i)X=X$, so $\lambda_0(g_i)=1$.
By lemma \ref{lem5}, $\lambda_0$ is an invertible element of $M_B$. \end{proof}

\begin{theo} \label{theo8}
Under the hypotheses 1--5, the following map is a bijection:
$$\theta:\left\{\begin{array}{rcl}
M_B&\longrightarrow&E_{A\rightarrow \K[X]}\\
\lambda&\longrightarrow&\phi_0\leftarrow\lambda.
\end{array}\right.$$
Moreover, if $\phi=\theta(\lambda)$, with $\lambda \in M_B$, then for all $x\in V$,
$$\lambda(x)=\frac{d\phi(x)}{dX}(0).$$
\end{theo}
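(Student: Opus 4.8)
The plan is to reduce the whole statement to the final formula $\frac{d\phi(x)}{dX}(0)=\lambda(x)$ (for $\phi=\theta(\lambda)$ and $x\in V$), which I will call the \emph{linear-part formula}, and then to deduce bijectivity from it together with Lemma \ref{lem6}. First I would record two identifications with $V^*$. Since $(A,m)$ is isomorphic to $S(V)$, a character of $(A,m,\delta)$ is the same thing as an algebra morphism $S(V)\to\K$, hence is determined by its restriction to $V$; thus $\lambda\mapsto\lambda_{\mid V}$ is a bijection $M_B\to V^*$. On the other side, by Lemma \ref{lem6} a coalgebra morphism $\phi\colon A\to\K[X]$ is determined by the functional $L_\phi\colon x\mapsto\frac{d\phi(x)}{dX}(0)$ on $A_+$, and $\phi$ is a bialgebra morphism if and only if $L_\phi$ vanishes on $A_+^2$ (point 2 of Lemma \ref{lem6}); as $A=S(V)$ is graded connected we have $A_+=V\oplus A_+^2$, so $\phi\mapsto(L_\phi)_{\mid V}$ is a bijection $E_{A\rightarrow\K[X]}\to V^*$. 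The linear-part formula asserts precisely that the composite $M_B\xrightarrow{\theta}E_{A\rightarrow\K[X]}\xrightarrow{\phi\mapsto(L_\phi)_{\mid V}}V^*$ equals $\lambda\mapsto\lambda_{\mid V}$; since this last map and the Lemma \ref{lem6} map are bijections, $\theta$ is a bijection and the displayed formula for $\lambda$ holds.

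Next I would prove the linear-part formula. Writing $\phi=\theta(\lambda)=(\phi_0\otimes\lambda)\circ\delta$ and using that $\lambda$ takes scalar values, one gets $L_\phi=(L_{\phi_0}\otimes\lambda)\circ\delta$. By homogeneity of $\phi_0$ (Theorem \ref{theo7}), the functional $f:=L_{\phi_0}$ equals $\varepsilon'$ on $A_1$ and vanishes on $A_n$ for every $n\neq 1$. For $x\in V_1$, write $x=\sum c_i g_i$; then $\delta(x)=\sum c_i\,g_i\otimes g_i$, so $\phi(x)=\sum c_i\,\phi_0(g_i)\lambda(g_i)=\lambda(x)X$ and $L_\phi(x)=\lambda(x)$. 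For $x\in V_n(g_i,g_\alpha)$ with $n\geq 2$, I would use hypothesis 5: $\delta(x)=g_i\otimes x+x\otimes g_\alpha+r$ with $r\in S(V_1\oplus\cdots\oplus V_{n-1})^{\otimes 2}$. Applying $f\otimes\lambda$, the first term contributes $f(g_i)\lambda(x)=\varepsilon'(g_i)\lambda(x)=\lambda(x)$ and the second contributes $f(x)\lambda(g_\alpha)=0$ because $x\in A_n$ with $n\geq 2$. Hence $L_\phi(x)=\lambda(x)+(f\otimes\lambda)(r)$, and it remains to show $(f\otimes\lambda)(r)=0$.

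The main obstacle is exactly this last vanishing. Since $f$ annihilates all homogeneous components of degree $\neq 1$, the quantity $(f\otimes\lambda)(r)$ only sees the part of $r$ whose left leg lies in $A_1$: if $p_1\colon A\to A_1$ denotes the projection onto degree one, then $(f\otimes\lambda)(r)=(\varepsilon'\otimes\lambda)\big((p_1\otimes Id)(r)\big)$. The key structural fact I would establish is that the degree-one part of the left leg of $\delta(x)$ is carried entirely by the distinguished term $g_i\otimes x$; equivalently, $(p_1\otimes Id)\circ\delta(x)=g_i\otimes x$, so $(p_1\otimes Id)(r)=0$ and $(f\otimes\lambda)(r)=0$. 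This relies on the compatibility of $\delta$ with the grading and on the role of $g_i$ in hypothesis 5 as the group-like carrying the left degree-one contribution, the remainder $r$ having left legs of degree $\geq 2$ (equivalently $\phi_0(x'_k)\in X^2\K[X]$ for every left leg $x'_k$ of $r$, by homogeneity of $\phi_0$). Granting this, $L_\phi(x)=\lambda(x)$ for all $x\in V$, which is the linear-part formula; combined with the first paragraph, this shows that $\theta$ is a bijection and yields $\lambda(x)=\frac{d\phi(x)}{dX}(0)$ for $x\in V$.
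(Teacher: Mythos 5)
Your first two paragraphs are sound and essentially re-package the paper's own argument: the identification of $M_B$ with $V^*$ by restriction, and of $E_{A\rightarrow \K[X]}$ with $V^*$ via the linear part (both points of Lemma \ref{lem6} together with $A_+=V\oplus A_+^2$), are exactly the ingredients the paper uses for injectivity and surjectivity, and your computation of $L_\phi=(L_{\phi_0}\otimes\lambda)\circ\delta$, with the contributions $f(g_i)\lambda(x)=\lambda(x)$ and $f(x)\lambda(g_\alpha)=0$, matches the paper's proof verbatim. Everything therefore hinges, as you say, on the vanishing $(f\otimes\lambda)(r)=0$.

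That is where there is a genuine gap. You propose to prove the stronger claim $(p_1\otimes Id)\circ\delta(x)=g_i\otimes x$, i.e.\ that the remainder $r$ has no degree-one component at all in its left legs; but this is neither proved (you only announce you ``would establish'' it, appealing to an unspecified compatibility of $\delta$ with the grading) nor derivable from hypotheses 1--5. Hypothesis 5 only places $r$ in $S(V_1\oplus\cdots\oplus V_{n-1})^{\otimes 2}$, and the Remark following the hypotheses adds $r\in Ker(\varepsilon')^{\otimes 2}$; nothing there excludes degree-one left legs such as a term $(g_j-g_{j'})\otimes v$ with $v\in Ker(\varepsilon')$, which satisfies all the stated constraints and has nonzero image under $p_1\otimes Id$. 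Such terms are harmless for a different reason, and that reason is the paper's actual mechanism: one writes $r=\sum x'_k\otimes x''_k$ with each $x'_k$ homogeneous either of degree $\geq 2$, or of degree $1$ with $\varepsilon'(x'_k)=0$; since $\phi_0$ is homogeneous and equals $\varepsilon'(\cdot)X$ in degree one, $\pi\circ\phi_0(x'_k)=0$ in both cases, whence $\pi\circ\phi(x)=\lambda(x)X$. Your parenthetical ``equivalently'' is also incorrect: for a degree-one leg $x'_k$, $\phi_0(x'_k)\in X^2\K[X]$ amounts to $\varepsilon'(x'_k)=0$ (then in fact $\phi_0(x'_k)=0$), which is strictly weaker than $p_1$ annihilating it; this conflation is precisely what led you to substitute a false intermediate claim for the correct one. (Your claim does happen to hold in the motivating example $\h_\qp$, where the only $\sim\triangleleft P$ with $cl(\sim)=1$ for connected $P$ is the full relation, but the theorem is stated under the abstract hypotheses 1--5, under which only the $\varepsilon'$-vanishing of the degree-one parts of the left legs is available.) The repair is small — replace ``left legs of degree $\geq 2$'' by ``degree-one components of left legs annihilated by $\varepsilon'$'', justified by $r\in Ker(\varepsilon')^{\otimes 2}$ — but as written the key step of your third paragraph is unproven and, in the stated generality, false.
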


\begin{proof} Let $\lambda \in M_B$, and $\phi=\phi_0\leftarrow \lambda$.
For all $i\in I$, $\phi(g_i)=X\lambda(g_i)$, so $\displaystyle \frac{d\phi(g_i)}{dX}(0)=\lambda(g_i)$.
If $n\geq 2$ and $x\in V_n(g_i,g_\alpha)$, we put:
$$\delta(x)=g_i\otimes x+\sum x'_k\otimes x''_k,$$
with for all $k$, $x'_k$ homogeneous of degree $\geq 2$ or homogeneous of degree $1$, with $\varepsilon'(x'_k)=0$.
For all $k$, $\pi\circ \phi_0(x'_k)=0$. We obtain:
$$\pi\circ \phi(x)=\pi\left(\lambda(x)X+\sum \phi_0(x'_k) \lambda(x''_k)\right)=\lambda(x)X.$$

By the unicity in lemma \ref{lem6}, $\phi$ is injective. If $\psi\in E_{A\rightarrow \K[X]}$, we define $\lambda \in M_B$ by:
$$\forall x\in V,\:\lambda(x)=\frac{d\psi(x)}{dX}(0).$$
We put $\phi=\phi_0\leftarrow\lambda$.
Then for all $x\in V$, $\displaystyle \frac{d\phi(x)}{dX}(0)=\frac{d\psi(x)}{dX}(0)=\lambda(x)$.
If $x\in A_+^2$, by the second point of lemma \ref{lem6}, $\displaystyle \frac{d\phi(x)}{dX}(0)=\frac{d\psi(x)}{dX}(0)=0$.
Finally, as $V$ generates $A$, $A_+=V+A_+^2$, and for all $x\in A_+$, $\displaystyle\frac{d\phi(x)}{dX}(0)=\frac{d\psi(x)}{dX}(0)$.
By the unicity in lemma \ref{lem6}, $\phi=\psi$, so $\theta$ is surjective. \end{proof}

\begin{cor}\label{cor9}
Under the hypotheses 1--5, for any  $\mu\in M_B$, there exists a unique Hopf algebra morphism $\phi:A\longrightarrow\K[X]$, such that:
$$\forall x\in A, \:\phi(x)(1)=\mu(1).$$
This morphism is:$\phi_0\leftarrow(\lambda_0^{*-1}*\mu)$.
\end{cor}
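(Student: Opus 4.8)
The plan is to reduce everything to the bijection of Theorem \ref{theo8} together with the invertibility of $\lambda_0$ from Theorem \ref{theo7}. First I observe that the condition characterizing $\phi$ can be rephrased intrinsically: writing $ev_1:\K[X]\longrightarrow \K$ for the evaluation $P\mapsto P(1)$, the requirement reads $ev_1\circ \phi=\mu$. Since $ev_1$ is an algebra morphism and $\phi$ is a Hopf algebra morphism, $ev_1\circ \phi$ is an algebra morphism from $A$ to $\K$, hence a character of $(A,m,\delta)$, i.e.\ an element of $M_B$; so the condition simply prescribes this character. By Theorem \ref{theo8}, every $\phi\in E_{A\rightarrow \K[X]}$ is of the form $\phi_0\leftarrow \lambda$ for a unique $\lambda\in M_B$, so it remains to find the unique $\lambda$ for which $ev_1\circ(\phi_0\leftarrow \lambda)=\mu$.

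The key step is the computation of $ev_1\circ(\phi_0\leftarrow \lambda)$. Recall that $\phi_0\leftarrow \lambda=(\phi_0\otimes \lambda)\circ \delta$, and that by Theorem \ref{theo7} one has $\phi_0(y)=\lambda_0(y)X^n$ for homogeneous $y\in A_n$, so that $ev_1\circ \phi_0=\lambda_0$. Writing $\delta(x)=x_1\otimes x_0$, I then get, for every $x\in A$,
\begin{align*}
ev_1\circ(\phi_0\leftarrow \lambda)(x)&=ev_1\bigl(\phi_0(x_1)\bigr)\lambda(x_0)=\lambda_0(x_1)\lambda(x_0)=(\lambda_0*\lambda)(x),
\end{align*}
where $*$ is the convolution product of $M_B$ (built from $\delta$). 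Hence $ev_1\circ(\phi_0\leftarrow \lambda)=\lambda_0*\lambda$.

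Combining the two steps, the condition $ev_1\circ \phi=\mu$ becomes $\lambda_0*\lambda=\mu$. By Theorem \ref{theo7}, $\lambda_0$ is invertible in $(M_B,*)$, so this equation has the unique solution $\lambda=\lambda_0^{*-1}*\mu$; transporting back through the bijection $\theta$ of Theorem \ref{theo8} gives existence and uniqueness of $\phi$, together with the explicit formula $\phi=\phi_0\leftarrow(\lambda_0^{*-1}*\mu)$. I expect no real obstacle here: the only computation requiring care is the identity $ev_1\circ(\phi_0\leftarrow \lambda)=\lambda_0*\lambda$, which just unwinds the definition of $\leftarrow$ as left convolution through $\delta$ and uses the homogeneity of $\phi_0$; the rest is a direct appeal to Theorems \ref{theo7} and \ref{theo8}.
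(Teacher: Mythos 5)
Your proposal is correct and follows essentially the same route as the paper: the paper likewise invokes Theorem \ref{theo8} to write $\phi=\phi_0\leftarrow\lambda$, computes $\phi(x)(1)=\lambda_0*\lambda(x)$ by decomposing the coaction into homogeneous components (exactly your identity $ev_1\circ(\phi_0\leftarrow\lambda)=\lambda_0*\lambda$), and solves $\lambda_0*\lambda=\mu$ using the invertibility of $\lambda_0$ from Theorem \ref{theo7}. Your packaging via $ev_1$ is only a cosmetic difference.
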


\begin{proof}
Let $\phi$ be a Hopf algebra morphism from $A$ to $\K[X]$. By theorem \ref{theo8}, there exists $\lambda \in M_B$
such that $\phi=\phi_0\leftarrow\lambda$. Let $x\in A$. We write $x=\sum x'_k\otimes x''_k$, with, for all $k$,
$x'_k$ homogeneous of degree $n_k$. Then:
\begin{align*}
\phi(x)(1)&=\left(\sum \lambda_0(x'_k)X^{n_k} \lambda(x''_k)\right)_{\mid X=1}=\sum \lambda_0(x'_k)\lambda(x''_k)=\lambda_0*\lambda(x).
\end{align*}
So $\phi$ satisfies the required conditions if, and only if, $\lambda_0*\lambda=\mu$, if, and only if,
$\lambda=\lambda_0^{*-1}*\mu$, as $\lambda_0$ is invertible. So such a $\phi$ exists and is unique. \end{proof}

\begin{cor}\label{cor10}
Under the hypotheses 1--5, there exists a unique morphism $\phi_1:A\longrightarrow \K[X]$ such that:
\begin{enumerate}
\item $\phi_1$ is a Hopf algebra morphism from $(A,m,\Delta)$ to $(\K[X],m,\Delta)$.
\item $\phi_1$ is a bialgebra morphism from $(A,m,\delta)$ to $(\K[X],m,\delta)$.
\end{enumerate}
Moreover, $\phi_1=\phi_0\leftarrow \lambda_0^{*-1}$ and, for all $x\in A$, $\phi_1(x)(1)=\varepsilon'(x)$.
\end{cor}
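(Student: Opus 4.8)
The plan is to pin down $\phi_1$ through its first (Hopf, i.e. $\Delta$-)structure by means of Theorem \ref{theo8}, and then read off the constraint imposed by the second ($\delta$-)structure as a condition on the associated character in $M_B$. For uniqueness, suppose $\phi_1$ satisfies 1 and 2. By point 1 and Theorem \ref{theo8} there is a unique $\lambda \in M_B$ with $\phi_1=\phi_0\leftarrow\lambda$. The counit of $(\K[X],m,\delta)$ is $P\mapsto P(1)$ (as $\delta(X)=X\otimes X$ forces its value at $X$ to be $1$), so point 2 forces $\phi_1(x)(1)=\varepsilon'(x)$ for all $x$. By the computation already carried out in the proof of Corollary \ref{cor9}, $\phi_1(x)(1)=(\lambda_0*\lambda)(x)$, whence $\lambda_0*\lambda=\varepsilon'$; since $\lambda_0$ is invertible (Theorem \ref{theo7}) this gives $\lambda=\lambda_0^{*-1}$. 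This proves uniqueness, forces $\phi_1=\phi_0\leftarrow\lambda_0^{*-1}$, and yields the ``moreover'' relation $\phi_1(x)(1)=(\lambda_0*\lambda_0^{*-1})(x)=\varepsilon'(x)$.

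For existence, set $\phi_1=\phi_0\leftarrow\lambda_1$ with $\lambda_1=\lambda_0^{*-1}$. Point 1 is automatic, since $\phi_1\in E_{A\rightarrow\K[X]}$ by Theorem \ref{theo8}; in particular $\phi_1$ is an algebra morphism, and $\phi_1(x)(1)=\varepsilon'(x)$ gives the counit-compatibility needed for the $\delta$-structure. What remains is the comultiplicativity $\delta_{\K[X]}\circ\phi_1=(\phi_1\otimes\phi_1)\circ\delta$. Both sides take values in $\K[X]\otimes\K[X]=\K[X,Y]$, so it suffices to prove equality after applying $Id\otimes ev_t$ for every $t\in\K$, since these functionals separate points of $\K[X,Y]$. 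Writing $h_t=Id\leftarrow ev_t\in E_{\K[X]\rightarrow\K[X]}$, so that $h_t(P)=P(tX)$ by the Example following Proposition \ref{prop4}, and $\Lambda_t=ev_t\circ\phi_1\in M_B$, the two partial evaluations are $\Psi_t:=(Id\otimes ev_t)\circ\delta_{\K[X]}\circ\phi_1=h_t\circ\phi_1$ and $\Phi_t:=(Id\otimes ev_t)\circ(\phi_1\otimes\phi_1)\circ\delta=\phi_1\leftarrow\Lambda_t$; I must show $\Psi_t=\Phi_t$ for all $t$.

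Both $\Psi_t$ and $\Phi_t$ lie in $E_{A\rightarrow\K[X]}$, so by the injectivity part of Theorem \ref{theo8} it is enough to compare their characters. Using Proposition \ref{prop4}(2), $\Psi_t=(h_t\circ\phi_0)\leftarrow\lambda_1$; since $h_t\circ\phi_0\in E_{A\rightarrow\K[X]}$, write it as $\phi_0\leftarrow\kappa_t$, giving $\Psi_t=\phi_0\leftarrow(\kappa_t*\lambda_1)$, while $\Phi_t=(\phi_0\leftarrow\lambda_1)\leftarrow\Lambda_t=\phi_0\leftarrow(\lambda_1*\Lambda_t)$ by Proposition \ref{prop4}(1). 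The identity to prove thus reduces to the single convolution equality $\kappa_t*\lambda_1=\lambda_1*\Lambda_t$ in $M_B$, and this is the heart of the matter and the step I expect to be the main obstacle. I would close it by evaluating the two defining relations at a scalar. Introduce $\lambda_0^{(t)}=ev_t\circ\phi_0\in M_B$, the character $x\mapsto\lambda_0(x)t^n$ on $A_n$. Evaluating $\phi_0\leftarrow\kappa_t=h_t\circ\phi_0$ at $X=1$ (same computation as in Corollary \ref{cor9}) gives $\lambda_0*\kappa_t=\lambda_0^{(t)}$, hence $\kappa_t=\lambda_0^{*-1}*\lambda_0^{(t)}$; and evaluating $\phi_1=\phi_0\leftarrow\lambda_1$ at $X=t$ gives $\Lambda_t=\lambda_0^{(t)}*\lambda_1$. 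Substituting, both $\kappa_t*\lambda_1$ and $\lambda_1*\Lambda_t$ equal $\lambda_0^{*-1}*\lambda_0^{(t)}*\lambda_0^{*-1}$, so they coincide. Therefore $\Psi_t=\Phi_t$ for all $t$, $\phi_1$ is a bialgebra morphism from $(A,m,\delta)$ to $(\K[X],m,\delta)$, and the proof is complete. The whole difficulty is thus funneled, via the separating family $\{Id\otimes ev_t\}_t$, into an identity in the single monoid $M_B$ that the two evaluations $X=1$ and $X=t$ resolve at once.
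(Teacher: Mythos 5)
Your proof is correct, and for the existence half it takes a genuinely different route from the paper's. The paper, after the same uniqueness argument (counit of $(\K[X],m,\delta)$ is evaluation at $1$, then corollary \ref{cor9}), proves the $\delta$-comultiplicativity of $\phi_1$ \emph{pointwise on integers}: it shows $\delta\circ\phi_1(x)(k,l)=(\phi_1\otimes\phi_1)\circ\delta(x)(k,l)$ for all $k,l\in\N^*$ by induction on $k$, the induction step using the cointeraction axiom $m^3_{2,4}\circ(\delta\otimes\delta)\circ\Delta=(\Delta\otimes Id)\circ\delta$ together with $\Delta$-compatibility of $\phi_1$, and the base case $k=1$ using $ev_1\circ\phi_1=\varepsilon'$. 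You instead evaluate only the second tensor leg at an arbitrary scalar $t$, observe that both resulting maps $\Psi_t=h_t\circ\phi_1$ and $\Phi_t=\phi_1\leftarrow\Lambda_t$ lie in $E_{A\rightarrow\K[X]}$, and invoke the bijection of theorem \ref{theo8} to reduce the identity to the convolution equation $\kappa_t*\lambda_0^{*-1}=\lambda_0^{*-1}*\Lambda_t$ in $M_B$, which you settle by the two evaluations $\lambda_0*\kappa_t=\lambda_0^{(t)}$ (at $X=1$, the corollary \ref{cor9} computation) and $\Lambda_t=\lambda_0^{(t)}*\lambda_0^{*-1}$ (at $X=t$), both sides becoming $\lambda_0^{*-1}*\lambda_0^{(t)}*\lambda_0^{*-1}$. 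I checked the delicate points: $\Lambda_t=ev_t\circ\phi_1$ and $\kappa_t$ are indeed characters (no invertibility of $ev_t$ is needed, so $t=0$ causes no trouble; only invertibility of $\lambda_0$, from theorem \ref{theo7}, is used), your uses of proposition \ref{prop4} (1) and (2) are legitimate, and the separation argument is valid because $\K$ has characteristic zero, hence is infinite. What each approach buys: the paper's induction is elementary and self-contained, needing nothing beyond the cointeraction identity and integer evaluations; yours stays entirely inside the character-monoid formalism already built in section \ref{sect1-3}, replaces the induction by a one-line identity in $M_B$, and produces explicit byproducts such as $ev_t\circ\phi_1=\lambda_0^{(t)}*\lambda_0^{*-1}$ and the character $\lambda_0^{*-1}*\lambda_0^{(t)}$ classifying $h_t\circ\phi_0$, which make transparent how the one-parameter family $h_t$ interacts with the action $\leftarrow$.
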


\begin{proof} \textit{Unicity}. If such a $\phi_1$ exists, then for all $x\in A$, $\phi_1(x)(1)=\varepsilon'\circ \phi_1(x)=\varepsilon'(x)$.
By corollary \ref{cor9}, $\phi_1=\phi_0\leftarrow(\lambda_0^{*-1}*\varepsilon')=\phi_0\leftarrow \lambda_0^{*-1}$.\\

\textit{Existence.} Let $\phi_1$ be the unique Hopf algebra morphism in $E_{A\rightarrow\K[X]}$ such that for all $x\in A$,
$\phi_1(x)(1)=\varepsilon'(x)$. Recall that we identify $\K[X]\otimes \K[X]$ with $\K[X,Y]$; for all $P\in \K[X]$:
\begin{align*}
\Delta(P)(X,Y)&=P(X+Y),&\delta(P)(X,Y)&=P(XY).
\end{align*}
Let us fix $x\in A$. Let us prove that for all $k,l\in \N^*$, $\delta\circ \phi_1(x)(k,l)=(\phi_1\otimes \phi_1)\circ \Delta(x)(k,l)$ by induction on $k$.
First, observe that $\delta \circ \phi_1(x)(k,l)=\phi_1(x)(kl)$. 
If $k=1$, then:
\begin{align*}
(\phi_1\otimes \phi_1)\circ \Delta(x)(1,l)&=\phi_1((\varepsilon'\otimes Id)\circ \Delta(x))(l)=\phi_1(x)(l).
\end{align*}
Let us assume the result at rank $k$. Then:
\begin{align*}
(\phi_1\otimes \phi_1)\circ \Delta(x)(k+1,l)&=(\Delta \otimes Id)\circ (\phi_1 \otimes \phi_1) \circ \delta(x)(k,1,l)\\
&=(\phi_1\otimes \phi_1 \otimes \phi_1)\circ (\Delta \otimes Id)\circ \delta(x)(k,1,l)\\
&=(\phi_1\otimes \phi_1 \otimes \phi_1)\circ m^3_{2,4}\circ (\delta \otimes \delta)\circ \Delta(x)(k,1,l)\\
&=(\phi_1\otimes \phi_1\otimes \phi_1 \otimes \phi_1)\circ(\delta \otimes \delta)\circ \Delta(x)(k,l,1,l)\\
&=(\phi_1 \otimes \phi_1)\circ \Delta(x)(kl,l)\\
&=\Delta \circ \phi_1(x)(kl,l)\\
&=\phi_1(x)(kl+l)\\
&=\phi_1(x)((k+1)l).
\end{align*}
So the result is true for all $k,l\geq 1$. Hence, $(\phi_1 \otimes \phi_1)\circ \delta(x)=\delta \circ \phi_1(x)$: $\phi_1$ is a bialgebra morphism
from $(A,m,\delta)$ to $(\K[X],m,\delta)$.
\end{proof}

\section{Examples from quasi-posets}

\subsection{Definition}

\begin{defi}
\begin{enumerate}
\item Let $A$ be a set finite set. A quasi-order on $A$ is a transitive, reflexive relation $\leq$ on $A$.
If $\leq$ is a quasi-order on $A$, we shall say that $(A,\leq)$ is a quasi-poset. If $P$ is a quasi-poset:
\begin{enumerate}
\item Its isoclass is denoted by $\isoclass{P}$.
\item $\sim_P$ is defined by:
$$\forall a,b\in A,\: a\sim_P b\mbox{ if }(a\leq b \mbox{ and }b\leq a).$$
It is an equivalence on $A$.
\item $\overline{A}=A/\sim_P$ is given an order by:
$$\forall a,b\in A, \:\overline{a}\leq \overline{b}\mbox{ if }a\leq b.$$
The poset $(\overline{A},\leq)$ is denoted by $\overline{P}$. 
\item The cardinality of $\overline{P}$ is denoted by $cl(P)$.
\end{enumerate}
\item Let $n\in \N$.
\begin{enumerate}
\item The set of quasi-posets which underlying set is $[n]=\{1,\ldots,n\}$ is denoted by $\QP(n)$. 
\item The set of posets which underlying set is $[n]$ is denoted by $\P(n)$.
\item The set of isoclasses of quasi-posets of cardinality $n$ is denoted by $\qp(n)$. 
\item The set of isoclasses of quasi-posets of cardinality $n$ is denoted by $\p(n)$. 
\end{enumerate}
We put:
\begin{align*}
\QP&=\bigsqcup_{n\geq 0} \QP(n),&\P&=\bigsqcup_{n\geq 0} \P(n),&\qp&=\bigsqcup_{n\geq 0} \qp(n),&\p&=\bigsqcup_{n\geq 0} \p(n),\\
\h_\QP&=Vect(\QP),&\h_\P&=Vect(\P),&\h_\qp&=Vect(\qp)&\h_\p&=Vect(\p).
\end{align*}\end{enumerate}\end{defi}

As posets are quasi-posets, there are canonical injections from $\h_\P$ into $\h_\QP$ and from $\h_\p$ into $\h_\qp$.
Moreover, the map $P\longrightarrow \overline{P}$ induces surjective maps from $\h_\QP$ to $\h_\P$ 
and from $\h_\qp$ to $\h_\p$, both denoted by $\xi$. 
The map $P\longrightarrow \isoclass{P}$ induces maps $\isoclass{ }:\h_\QP\longrightarrow \h_\qp$
and $\isoclass{ }:\h_\P\longrightarrow \h_\p$. The following diagram commutes:
\begin{align}
\label{EQ1}&\xymatrix{\h_\QP\ar@{->>}[rd]^{\xi}\ar@{->>}[rr]^{\isoclass{}}&&\h_\qp\ar@{->>}[rd]^{\xi}&\\
&\h_\P\ar@{->>}[rr]^(.3){\isoclass{}}&&\h_\p\\
\h_\P\ar[ru]_{Id}\ar@{->>}[rr]^{\isoclass{}}\ar@{^(->}[uu]&&\h_\p\ar[ru]_{Id}\ar@{^(->}|(.5)\hole[uu]&}
\end{align}

We shall represent any element $P$ of $\QP$ by the Hasse graph of $\overline{P}$, indicating on the vertices the
elements of the corresponding equivalence class.
For example, the elements of $\QP(n)$, $n\leq 3$, are:
\begin{align*}
&1;&&\tdun{$1$};&&\tdun{$1$}\tdun{$2$},\tddeux{$1$}{$2$},\tddeux{$2$}{$1$},\tdun{$1,2$}\hspace{3mm};&&
\tdun{$1$}\tdun{$2$}\tdun{$3$},\tdun{$1$}\tddeux{$2$}{$3$},\tdun{$1$}\tddeux{$3$}{$2$},\tdun{$2$}\tddeux{$1$}{$3$},
\tdun{$2$}\tddeux{$3$}{$1$},\tdun{$3$}\tddeux{$1$}{$2$},\tdun{$3$}\tddeux{$2$}{$1$},
\tdun{$1$}\tdun{$2,3$}\hspace{3mm},\tdun{$2$}\tdun{$1,3$}\hspace{3mm},\tdun{$3$}\tdun{$1,2$}\hspace{3mm},
\end{align*}
\begin{align*}
\tdtroisun{$1$}{$3$}{$2$},\tdtroisun{$2$}{$3$}{$1$},\tdtroisun{$3$}{$2$}{$1$},
\pdtroisun{$1$}{$2$}{$3$},\pdtroisun{$2$}{$1$}{$3$},\pdtroisun{$3$}{$1$}{$2$},
\tdtroisdeux{$1$}{$2$}{$3$},\tdtroisdeux{$1$}{$3$}{$2$},\tdtroisdeux{$2$}{$1$}{$3$},\tdtroisdeux{$2$}{$3$}{$1$},
\tdtroisdeux{$3$}{$1$}{$2$},\tdtroisdeux{$3$}{$2$}{$1$},
\tddeux{$1$}{$2,3$}\hspace{3mm},\tddeux{$2$}{$1,3$}\hspace{3mm},\tddeux{$3$}{$1,2$}\hspace{3mm},
\tddeux{$2,3$}{$1$}\hspace{3mm},\tddeux{$1,3$}{$2$}\hspace{3mm},\tddeux{$1,2$}{$3$}\hspace{3mm},
\tdun{$1,2,3$}\hspace{5mm}.
\end{align*}
We shall represent any element $P\in \qp$ by the Hasse graph of $\overline{P}$, indicating on the vertices the cardinality of the corresponding equivalence
class, if this cardinality is not equal to $1$. For example, the elements of $\qp(n)$, $n\leq 3$, are:
\begin{align*}
&1;&&\tun;&&\tun\tun,\tdeux,\tdun{$2$};&&\tun\tun\tun,\tun\tdeux,\tun\tdun{$2$};&&\ttroisun,\ptroisun,\ttroisdeux,\tddeux{}{$2$},\tddeux{$2$}{},\tdun{$3$}.
\end{align*}

\subsection{First coproduct}

By Alexandroff's theorem \cite{Alexandroff,Stong}, 
finite quasi-posets are in bijection with finite topological spaces. Let us recall the definition of the topology attached to a quasi-poset.

\begin{defi} \begin{enumerate}
\item Let $P=(A,\leq)$ be a quasi-poset. An open set of $P$ is a subset $O$ of $A$ such that:
$$\forall i,j\in A,\: (i\in O \mbox{ and }i\leq j)\Longrightarrow (j\in O).$$
The set of open sets of $P$ (the topology associated to $P$) is denoted by $top(P)$.
\item Let $P=(A,\leq)$ be a quasi-poset and $B\subseteq A$. We denote by $P_{\mid B}$ the quasi-poset $(B,\leq_{\mid B})$.
\item Let $P=(A,\leq_P)$ be a quasi-poset. We assume that $A$ is also given a total order $\leq$: for example,
$A$ is a subset of $\N$. If the cardinality of $A$ is $n$, there exists a unique increasing bijection $f$ from $[n]$, with its usual order, to $(A,\leq)$. 
We denote by $Std(P)$ the quasi-poset, element of $\QP(n)$, defined by:
$$\forall i,j \in [n],\: i\leq_{Std(P)} j \Longleftrightarrow f(i)\leq_P f(j).$$
\end{enumerate}\end{defi}

\begin{prop} \begin{enumerate}
\item We define a product $m$ on $\h_\QP$ in the following way: if $P\in \QP(k)$, $Q\in \QP(l)$, then $PQ=m(P,Q) \in \QP(k+l)$ and
\begin{align*}
\forall i,j \in [k+l],\: i\leq_{PQ} \Longleftrightarrow &(1\leq i,j \leq k \mbox{ and }i \leq_P j)\\
&\mbox{ or }(k+1\leq i,j \leq k+l \mbox{ and }i-k\leq_Q j-k).
\end{align*}
\item We define a second product $\downarrow$ on $\h_\QP$ in the following way: 
if $P\in \QP(k)$, $Q\in \QP(l)$, then $PQ=m(P,Q) \in \QP(k+l)$ and
\begin{align*}
\forall i,j \in [k+l],\: i\leq_{PQ} \Longleftrightarrow &(1\leq i,j \leq k \mbox{ and }i \leq_P j)\\
&\mbox{ or }(k+1\leq i,j \leq k+l \mbox{ and }i-k\leq_Q j-k)\\
&\mbox{ or }(1\leq i \leq k<j \leq k+l).
\end{align*}
\item We define a coproduct $\Delta$ on $\h_\QP$ in the following way: 
$$\forall P\in \QP(n), \:\Delta(P)=\sum_{O\in top(P)} Std(P_{\mid [n]\setminus O})\otimes Std(P_{\mid O}).$$
\end{enumerate}
Then $(\h_\QP,m,\Delta)$ is a non-commutative, non-cocommutative Hopf algebra, and $(\h_\QP,\downarrow,\Delta)$ is an infinitesimal bialgebra.
\end{prop}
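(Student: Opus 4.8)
The plan is to verify the axioms one structure at a time, reducing every compatibility to a combinatorial statement about the open sets $top(P)$ and to the behaviour of $Std$ under restriction. First I would record the purely algebraic facts. Both $m$ and $\downarrow$ are associative with unit the empty quasi-poset $1$: $m$ concatenates quasi-posets on disjoint blocks, while $(P\downarrow Q)\downarrow R$ and $P\downarrow(Q\downarrow R)$ both build on $[k+l+p]$ the quasi-order whose blocks are internally $P,Q,R$ and in which every element of an earlier block lies below every element of a later block. Grading by cardinality makes $\Delta$ homogeneous (if $\sharp O=j$ then the term sits in degree $(n-j,j)$) and makes $(\h_\QP,m,\Delta)$ graded with one-dimensional degree-$0$ part, so connectedness will produce the antipode automatically once the bialgebra axioms hold. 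Non-commutativity of $m$ and non-cocommutativity of $\Delta$ are each witnessed by a small explicit example (in $\QP(2)$ and $\QP(3)$ respectively, the latter being the poset on $[3]$ with $1\leq 2$ and $1\leq 3$, whose two legs under $\Delta$ are not exchanged by the flip).

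The core of the argument is the analysis of $\Delta$. The key preliminary observation is that for $O\in top(P)$ the open sets of the subspace $P_{\mid O}$ are exactly the elements of $top(P)$ contained in $O$, whereas a subset $U\subseteq [n]\setminus O$ is open in $P_{\mid [n]\setminus O}$ if and only if $O\cup U\in top(P)$. Combining this with the compatibility of standardization and restriction (standardizing $P$ then restricting agrees with restricting then standardizing), I would show that both $(\Delta\otimes Id)\circ\Delta(P)$ and $(Id\otimes\Delta)\circ\Delta(P)$ equal
$$\sum Std(P_{\mid C_1})\otimes Std(P_{\mid C_2})\otimes Std(P_{\mid C_3}),$$
the sum ranging over ordered set partitions $[n]=C_1\sqcup C_2\sqcup C_3$ with $C_3\in top(P)$ and $C_2\cup C_3\in top(P)$, i.e.\ over flags $\emptyset\subseteq C_3\subseteq C_2\cup C_3\subseteq [n]$ of open sets. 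This symmetric description yields coassociativity. The counit is the projection onto degree $0$: the terms $O=[n]$ and $O=\emptyset$ of $\Delta(P)$ give $1\otimes P$ and $P\otimes 1$, and every other term is annihilated on one leg by $\varepsilon$.

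It then remains to establish the two compatibilities. Since $PQ$ carries no relation between the two blocks, $top(PQ)=\{O_P\sqcup O_Q\mid O_P\in top(P),\ O_Q\in top(Q)\}$; splitting the defining sum accordingly and invoking compatibility of $Std$ with disjoint unions gives $\Delta(PQ)=\Delta(P)\Delta(Q)$, so $(\h_\QP,m,\Delta)$ is a graded connected bialgebra and hence a Hopf algebra. For $(\h_\QP,\downarrow,\Delta)$ the decisive point is the dichotomy for $top(P\downarrow Q)$: because every element of $P$ lies below every element of $Q$, an open set $O$ either misses $P$ (then $O\in top(Q)$) or meets $P$, in which case it must contain all of $Q$, so $O=O_P\cup Q$ with $\emptyset\neq O_P\in top(P)$. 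Writing $\Delta(P)=P^{(1)}\otimes P^{(2)}$ and $\Delta(Q)=Q^{(1)}\otimes Q^{(2)}$, the first family contributes $(P\downarrow Q^{(1)})\otimes Q^{(2)}$ and the second contributes the terms of $P^{(1)}\otimes(P^{(2)}\downarrow Q)$ with $P^{(2)}\neq 1$; reinserting the missing boundary term $P\otimes Q$ gives exactly
$$\Delta(P\downarrow Q)=(P\downarrow Q^{(1)})\otimes Q^{(2)}+P^{(1)}\otimes(P^{(2)}\downarrow Q)-P\otimes Q,$$
the defining relation of a (unital) infinitesimal bialgebra.

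The main obstacle I anticipate is bookkeeping rather than conceptual: keeping the standardization maps straight when open sets are split iteratively (for coassociativity) and when the underlying set is reindexed by $m$ and $\downarrow$, so that the triple-partition description and the two identities hold on the nose and not merely up to isomorphism. Once the compatibility of $Std$ with restriction and with the two products is isolated as a preliminary lemma, the flag description for coassociativity and the open-set dichotomy for $P\downarrow Q$ make all three nontrivial identities fall out.
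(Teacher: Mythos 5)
Your proposal is correct, but note that the paper itself contains no argument for this proposition: its ``proof'' is the single line ``See \cite{FM,FMP}'', so the comparison is with the deferred proofs in those references rather than with anything in the present text. Your verification supplies exactly what such a proof requires, and every step checks out: the two preliminary facts about relative topologies (a subset $U\subseteq O$ is open in $P_{\mid O}$ iff $U\in top(P)$, and $U\subseteq [n]\setminus O$ is open in $P_{\mid [n]\setminus O}$ iff $U\cup O\in top(P)$) do yield the symmetric flag description $\emptyset\subseteq C_3\subseteq C_2\cup C_3\subseteq [n]$ of both iterated coproducts, hence coassociativity; the splitting $top(PQ)=\{O_P\sqcup(k+O_Q)\}$ gives multiplicativity of $\Delta$, since the standardization of a restriction of $PQ$ factors as the product of the standardized restrictions (the increasing bijection sends the $P$-block to an initial segment); graded connectedness then produces the antipode; and your dichotomy for $top(P\downarrow Q)$ (either $O\subseteq Q$-block and $O\in top(Q)$, or $O=O_P\sqcup Q$-block with $\emptyset\neq O_P\in top(P)$) gives precisely the Loday--Ronco unital infinitesimal relation $\Delta(P\downarrow Q)=(P\downarrow Q^{(1)})\otimes Q^{(2)}+P^{(1)}\otimes(P^{(2)}\downarrow Q)-P\otimes Q$, the subtracted term $P\otimes Q$ accounting correctly for the overlap excluded by requiring $O_P\neq\emptyset$. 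Your non-cocommutativity witness (the poset on $[3]$ with $1\leq 2$, $1\leq 3$, whose coproduct contains a term of the form singleton $\otimes$ two-point antichain but not its flip) is also valid. You were right to isolate the compatibility of $Std$ with restriction and with the two products as the one genuinely delicate bookkeeping lemma; with it stated once, all three identities hold on the nose, and this is the same route taken in the cited references.
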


\begin{proof} See \cite{FM,FMP}. \end{proof}\\

\textbf{Examples.} If $\{a,b\}=\{1,2\}$ and $\{i,j,k\}=\{1,2,3\}$:
\begin{align*}
\Delta(\tdun{$1$})&=\tdun{$1$}\otimes 1+1\otimes \tdun{$1$},\\
\Delta(\tddeux{$a$}{$b$})&=\tddeux{$a$}{$b$}\otimes 1+1\otimes \tddeux{$a$}{$b$}+\tdun{$a$}\otimes \tdun{$b$},\\
\Delta(\tdtroisun{$i$}{$k$}{$j$})&=\tdtroisun{$i$}{$k$}{$j$}\otimes 1+1\otimes \tdtroisun{$i$}{$k$}{$j$}
+\tddeux{$i$}{$j$}\otimes \tdun{$k$}+\tddeux{$i$}{$k$}\otimes \tdun{$j$}+\tdun{$i$}\otimes \tdun{$j$}\tdun{$k$},\\
\Delta(\pdtroisun{$i$}{$j$}{$k$})&=\pdtroisun{$i$}{$j$}{$k$}\otimes1+1\otimes \pdtroisun{$i$}{$j$}{$k$}
+\tdun{$j$}\otimes \tddeux{$k$}{$i$}+\tdun{$k$}\otimes \tddeux{$j$}{$i$}+\tdun{$j$}\tdun{$k$}\otimes \tdun{$i$},\\
\Delta(\tdtroisdeux{$i$}{$j$}{$k$})&=\tdtroisdeux{$i$}{$j$}{$k$}\otimes 1+1\otimes \tdtroisdeux{$i$}{$j$}{$k$}
\tdun{$i$}\otimes \tddeux{$j$}{$k$}+\tddeux{$i$}{$j$}\otimes \tdun{$k$}.
\end{align*}

\textbf{Remark}. This Hopf algebraic structure is compatible with the morphisms of (\ref{EQ1}), that is to say:
\begin{enumerate}
\item $\h_\P$ is a Hopf subalgebra of $\h_\QP$.
\item observe that:
\begin{itemize}
\item If $(P_1,P_2)$ and $(Q_1,Q_2)$ are pairs of isomorphic quasi-posets, then $P_1Q_1$ and $P_2Q_2$ are isomorphic.
\item If $P_1$ and $P_2$ are isomorphic quasi-posets of $\QP(n)$, and if $\phi:[n]\longrightarrow [n]$ is an isomorphism from $P_1$
to $P_2$, then the topology associated to $P_2$ is the image by $\phi$ of the topology associated to $P_1$ and for any subset $I$
of $P_1$, $\phi_{\mid I}$ is an isomorphism from $(P_1)_{\mid I}$ to $(P_2)_{\mid \phi(I)}$.
\end{itemize}
Consequently, the surjective map $\isoclass{}:\h_\QP\longrightarrow \h_\qp$ is compatible with the product and the coproduct:
$\h_\qp$ inherits a Hopf algebra structure. Its product is the disjoint union of quasi-posets.
For any quasi-poset $P=(A,\leq_P)$:
$$\Delta(\isoclass{P})=\sum_{O\in top(P)} \isoclass{P_{\mid A\setminus O}}\otimes\isoclass{P_{\mid O}}.$$
\item $\h_\p$ is a Hopf subalgebra of $\h_\qp$.
\item All the morphisms in (\ref{EQ1}) are Hopf algebra morphisms.
\end{enumerate}

\begin{defi}\begin{enumerate}
\item We shall say that a finite quasi-poset $P=(A,\leq_P)$ is connected if its associated topology is connected.
\item For any finite quasi-poset $P$, we denote by $cc(P)$ the number of connected components of its associated topology.
\end{enumerate}\end{defi}

It is well-known that $P$ is connected if, and only if, the Hasse graph of $\overline{P}$ is connected.
Any quasi-poset $P$ can be decomposed as the disjoint union of its connected components; in an algebraic setting,
$\h_\qp$ is generated as a polynomial algebra by the connected quasi-posets.
This is not true in $\h_\QP$: for example, $\tddeux{$1$}{$3$}\tdun{$2$}$ is both not connected and indecomposable in $\h_\QP$.

\subsection{Second coproduct}

\begin{defi}
Let $P=(A,\leq_P)$ be a quasi-poset and let $\sim$ be an equivalence on $A$.
\begin{enumerate}
\item We define a second quasi-order $\leq_{P|\sim}$ on $A$ by the relation:
$$\forall x,y\in A,\:x\leq_{P|\sim} y\mbox{ if }(x\leq_P y\mbox{ and }x\sim y).$$
\item We define a third quasi-order $\leq_{P/\sim}$ on $A$ as the transitive closure of the relation $\mathcal{R}$ defined by:
$$\forall x,y\in A,\:x \mathcal{R} y\mbox{ if } (x\leq_P y \mbox{ or }x \sim y).$$
\item We shall say that $\sim$ is $P$-compatible and we shall denote $\sim \triangleleft P$ if the two following conditions are satisfied:
\begin{itemize}
\item The restriction of $P$ to any equivalence class of $\sim$ is connected.
\item The equivalences $\sim_{P/\sim}$ and $\sim$ are equal. In other words:
$$\forall x,y\in A, \: (x\leq_{P/\sim} y\mbox{ and }y\leq_{P/\sim} x) \Longrightarrow x\sim y;$$
note that the converse assertion trivially holds.
\end{itemize}\end{enumerate}\end{defi}

\textbf{Remarks.} \begin{enumerate}
\item $P|\sim$ is the disjoint union of the restriction of $\leq_P$ to the equivalence classes of $\sim$.
\item Let $x,y\in P$. Then $x\leq_{P/\sim} y$ if there exist $x_1,x'_1,\ldots,x_k,x'_k \in A$ such that:
$$x\leq_P x_1 \sim x'_1 \leq_P\ldots \leq_P x_k\sim x'_k \leq_P y.$$
\item If $\sim  \triangleleft P$, then:
\begin{enumerate}
\item The equivalence classes of $\sim_{P/\sim}$ are the equivalence classes of $\sim$ and are included in a connected component of $P$. 
This implies that the connected components of $P/\sim$ are the connected components of $P$. Consequently:
\begin{align}
cl(P/\sim)&=cl(\sim),&cc(P/\sim)&=cc(P),
\end{align}
where $cl(\sim)$ is the number of equivalence classes of $\sim$.
\item If $x\sim_P y$ and $x\sim y$, then $x\sim_{P|\sim} y$: the equivalence classes of $\sim_{P|\sim}$ are the equivalence classes
of $\sim_P$; the connected components of $P|\sim$ are the equivalence classes of $\sim$. Consequently:
\begin{align}
cl(P|\sim)&=cl(P),&cc(P|\sim)&=cl(\sim).
\end{align}\end{enumerate}\end{enumerate}

\begin{defi}
Let $P\in \QP$. We shall say that $P$ is \emph{discrete} if $\isoclass{\overline{P}}=\tun^{cl(P)}$.
\end{defi}

In other words, $P$ is discrete if, and only if, $\sim_P=\leq_P$.

\begin{defi}
We define a second coproduct $\delta$ on $\h_\QP$ in the following way: for all $P\in \QP$,
$$\delta(P)=\sum_{\sim \triangleleft P} (P/\sim) \otimes (P\mid \sim).$$
Then $(\h_\QP,m,\delta)$ is a bialgebra. Its counit $\varepsilon'$ is given by:
$$\forall P\in \QP,\: \varepsilon'(P)=\begin{cases}
1\mbox{ if $P$ is discrete},\\
0\mbox{ otherwise}.
\end{cases}$$
\end{defi}

\begin{proof} Firstly, let us prove the compatibility of $\delta$ and $m$. Let $P=(A,\leq_P)$ and $Q=(B,\leq_Q)$ be two elements of $\QP$.
Let $\sim$ be an equivalence relation on $P$. We denote by $\sim'$ and $\sim''$ the restriction of $\sim$ to $P$ and $Q$. Then:
\begin{itemize}
\item If $\sim\triangleleft PQ$, then as the equivalence classes of $\sim$ are connected, they are included in $A$ or in $B$.
Consequently, if $x\in A$ and $y\in B$, $x$ and $y$ are not equivalent for $\sim$. Moreover, $\sim'\triangleleft P$ and $\sim'' \triangleleft Q$, and:
\begin{align*}
PQ|\sim&=(P|\sim')(Q|\sim''),&PQ/\sim&=(P/\sim')(Q/\sim'').
\end{align*}
\item Conversely, if $\sim'\triangleleft P$,$\sim'' \triangleleft Q$ and for all $x\in A$, $y\in B$, $x$ and $y$ not are not $\sim$-equivalent,
then $\sim\triangleleft PQ$.
\end{itemize}
Hence:
\begin{align*}
\delta(PQ)&=\sum_{\sim \triangleleft PQ} (PQ/\sim)\otimes (PQ|\sim)\\
&=\sum_{\sim' \triangleleft P,\sim'' \triangleleft Q} (P/\sim')(Q/\sim'')\otimes (P|\sim')(Q|\sim'')\\
&=\delta(P)\delta(Q).
\end{align*}

Let us now prove the coassociativity of $\delta$. Let $P\in \QP$. 

\textit{First step.} We put:
\begin{align*}
A&=\{(r,r')\mid r\triangleleft P,\: r'\triangleleft P/r\},&
B&=\{(s,s')\mid s\triangleleft P,\: s'\triangleleft P_\mid s\}.
\end{align*}
We consider the maps:
\begin{align*}
F&:\left\{\begin{array}{rcl}
A&\longrightarrow&B\\
(r,r')&\longrightarrow&(r',r),
\end{array}\right.&
G&:\left\{\begin{array}{rcl}
B&\longrightarrow&A\\
(s,s')&\longrightarrow&(s',s).
\end{array}\right.
\end{align*}

$F$ is well-defined: we put $(s,s')=(r',r)$. The equivalence classes of $s$ are the equivalence classes of $r'$, so are $P$-connected.
If $x\sim_{P/s} y$, there exist $x_1,x'_1,\ldots,x_k,s'_k$ and $y_1,y'_1,\ldots,y_l,y_l$ such that:
\begin{align*}
&x \leq_P x_1 r' x'_1 \leq_P\ldots \leq_P x_kr' x'_k \leq_P y,&
&y \leq_P y_1 r' y'_1 \leq_P\ldots \leq_P y_lr' y'_l \leq_P x.
\end{align*}
Hence:
\begin{align*}
&x \leq_{P/r} x_1 r' x'_1 \leq_{P/r}\ldots \leq_{P/r} x_kr' x'_k \leq_{P/r} y,&
&y \leq_{P/r} y_1 r' y'_1 \leq_{P/r}\ldots \leq_{P/r} y_lr' y'_l \leq_{P/r} x.
\end{align*}
So $x\sim_{P/r} y$. As $r'\triangleleft P/r$, $x\sim_P y$: $s\triangleleft P$.

Let us assume that $x s' y$. Then $x r y$, so, as $r\triangleleft y$, there exists a path from $x$ to $y$ in the Hasse graph of $P$,
made of vertices all $r$-equivalent to $x$ and $y$. If $x'$ and $y$' are two elements of this path,
Then $x' r y'$, so $x' \leq_{G/r} y'$ and finally $x' \leq_{(P/r)/r'} y'$. As $r'\triangleleft P/r$, $x' r'y'$, so $x s y$.
So the elements of this path are all $P|s$-equivalent: the equivalence classes of $s'$ are $P|s$-connected.

Let us assume that $x\sim_{(P\mid s)/s'} y$. There exist $x_1,x'_1,\ldots,x_k,x'_k$ and $y_1,y'_1,\ldots,y_l,y'_l$ such that:
\begin{align*}
&x \leq_{P\mid r'} x_1 r x'_1 \leq_{P\mid r'} \ldots \leq_{P\mid r'} x_kr x'_k \leq_{P\mid r'} y,&
&y \leq_{P\mid r'} y_1 r y'_1 \leq_{P\mid r'}\ldots \leq_{P\mid r'} y_lr y'_l \leq_{P\mid r'} x.
\end{align*}
Then:
\begin{align*}
&x \leq_P x_1 r x'_1 \leq_P \ldots \leq_P x_kr x'_k \leq_P y,&
&y \leq_P y_1 r y'_1 \leq_P\ldots \leq_P y_lr y'_l \leq_P x,
\end{align*}
So $x \leq_{P/r} y$ and $y\leq_{P/r} x$. As $r \triangleleft P$, $x r y$, so $x s' y$: we obtain that $s' \triangleleft P\mid s$. \\

$G$ is well-defined: let $(s,s') \in B$ and let us put $G(s,s')=(r,r')$. The equivalence classes of $r$ are $P|s$-connected, so are $P$-connected.
Let us assume that $x \sim_{P/r} y$.  There exist $x_1,x'_1,\ldots,x_k,x'_k$ and $y_1,y'_1,\ldots,y_l,y'_l$ such that:
\begin{align*}
&x \leq_P x_1 s' x'_1 \leq_P\ldots \leq_P x_k s' x'_k \leq_P y,&
&y \leq_P y_1 s' y'_1 \leq_P\ldots \leq_P y_l s' y'_l \leq_P x.
\end{align*}
As the equivalence classes of $s'$ are $P|s$-connected, all this elements are in the same connected component of $P|s$,
so are $s$-equivalent: 
\begin{align*}
&x \leq_{P\mid s} x_1 s' x'_1 \leq_{P\mid s} \ldots \leq_{P\mid s} x_ks' x'_k \leq_{P\mid s} y,&
&y \leq_{P\mid s} y_1 s' y'_1 \leq_{P\mid s}\ldots \leq_{P\mid s} y_ls' y'_l \leq_{P\mid s} x.
\end{align*}
Hence, $x \sim_{(P\mid s)/s'} y$, so as $s'\triangleleft P\mid s$, $x s' y$, so $x r y$: $r\triangleleft P$.

The equivalence classes of $r'$ are the equivalence classes of $s$, so are $P$-connected and therefore $P/r$-connected.
Let us assume that $x \sim_{(P/r)/r'} y$. Note that if $x' s' y'$, then $x'$ and $y'$ are in the same connected component of $P|s$, so $x' s y$.
By the definition of $\leq_{P/s'}$ as a transitive closure, using this observation, we obtain:
\begin{align*}
&x \leq_P x_1 s x'_1 \leq_P\ldots \leq_P x_k s x'_k \leq_P y,&
&y \leq_P y_1 s y'_1 \leq_P\ldots \leq_P y_l s y'_l \leq_P x.
\end{align*}
So $x \sim_{P/s} y$. As $s \triangleleft P$, $x s y$, so $x r'y$: $r'\triangleleft P/r$.\\

Clearly, $F$ and $G$ are inverse bijections.\\

\textit{Second step.} Let $(r,r') \in A$ and let $F(r,r')=(s,s')$. 
Note that if $x r y$, then $x/\sim_{P/r} y$, so $x/\sim_{(P/r)/r'} y$, so $x r' y$ as $r' \triangleleft P/r$. Then:
\begin{align*}
\leq_{(P/r)/r'}&=\mbox{transitive closure of ($(x r' y)$ or $(x\leq_{P/r} y)$)}\\
&=\mbox{transitive closure of ($(x r' y)$ or $(x\leq_P y)$ or $(x \leq_r y)$)}\\
&=\mbox{transitive closure of ($(x r' y)$ or $(x\leq_P y)$)}\\
&=\mbox{transitive closure of ($(x s y)$ or $(x\leq_P y)$)}\\
&=\leq_{P/s}.
\end{align*}
So $P/s=(P/r)/r'$. 
\begin{align*}
\leq_{(P|s)/s'}&=\mbox{transitive closure of ($(x s' y)$ or $(x\leq_{P|s} y)$)}\\
&=\mbox{transitive closure of ($(x r y)$ or $(x\leq_{P|r'} y)$)}\\
&=\mbox{transitive closure of ($(x r y)$ or ($(x\leq_P y)$ and $(x r' y)$))}\\
&=\mbox{transitive closure of (($(x r y)$ or $(x\leq_P y)$) and ($(s r y)$ or $(x r' y)$))}\\
&=\mbox{transitive closure of ($(x \leq_{Pr/r} y)$ and $(s r' y)$)}\\
&=\leq_{(P/r)|r'}.
\end{align*}
So $(P|s)/s'=(P/r)|r'$. For all $x,y$:
\begin{align*}
x\leq_{(P|s)/s'}y&\Longleftrightarrow (x \leq_{P\mid s} y) \mbox{ and } (x s' y)\\
&\Longleftrightarrow (x \leq_P y) \mbox{ and } x s y \mbox{ and } (x s' y)\\
&\Longleftrightarrow (x \leq_P y) \mbox{ and } x r' y \mbox{ and } (x r y)\\
&\Longleftrightarrow (x \leq_P y) \mbox{ and } (x r y)\\
&\Longleftrightarrow x\leq_{P|r} y.
\end{align*}
So $(P|s)|s'=P|r$. Finally:
\begin{align*}
(\delta \otimes Id)\circ \delta(P)&=\sum_{(r,r')\in A} (P/r)/r' \otimes (P/r)|r'\otimes P|r\\
&=\sum_{(s,s')\in B} P/s\otimes (P|s)/s'\otimes (P|s)|s'\\
&=(Id \otimes \delta)\circ \delta(P).
\end{align*}
So $\h_\QP$ is a bialgebra. \\

Let $P$ be a quasi-poset. If $P$ is discrete, then $\delta(P)=P\otimes P$, so $(\varepsilon'\otimes Id)\circ \delta(P)=(Id \otimes \varepsilon')\circ \Delta(P)=P$.
If $P$ is not discrete, there are three types of relations $\sim \triangleleft P$:
\begin{enumerate}
\item The equivalence classes of $\sim$ are the connected components of $P$: in this case, $P|\sim=P$ and $P/\sim=P_1$ is discrete.
\item $\sim=\sim_P$: in this case, $P/\sim=P$ and $P|\sim=P_2$ is discrete.
\item $\sim$ is not one of two preceding relations: in this case, nor $P/\sim$, nor $P/\sim$ is discrete.
\end{enumerate}
So:
$$\delta(P)-P_1\otimes P-P\otimes P_2\in Ker(\varepsilon')\otimes Ker(\varepsilon'),$$
which implies that $(\varepsilon'\otimes Id)\circ \delta(P)=(Id \otimes \varepsilon')\circ \Delta(P)=P$. \end{proof}\\

\textbf{Examples.} If $\{a,b\}=\{1,2\}$ and $\{i,j,k\}=\{1,2,3\}$:
\begin{align*}
\delta(\tdun{$1$})&=\tdun{$1$}\otimes \tdun{$1$},\\
\delta(\tddeux{$a$}{$b$})&=\tddeux{$a$}{$b$}\otimes \tdun{$a$}\tdun{$b$}+\tdun{$a,b$}\hspace{3mm} \otimes\tddeux{$a$}{$b$},\\
\delta(\tdtroisun{$i$}{$k$}{$j$})&=\tdtroisun{$i$}{$k$}{$j$}\otimes \tdun{$i$}\tdun{$j$}\tdun{$k$}
+\tddeux{$i,j$}{$k$}\hspace{2mm}\otimes \tddeux{$i$}{$j$}\tdun{$k$}
+\tddeux{$i,k$}{$j$}\hspace{2mm}\otimes \tddeux{$i$}{$k$}\tdun{$j$}
+\tdun{$i,j,k$}\hspace{5mm}\otimes \tdtroisun{$i$}{$k$}{$j$},\\
\delta(\pdtroisun{$i$}{$j$}{$k$})&=\pdtroisun{$i$}{$j$}{$k$}\otimes \tdun{$i$}\tdun{$j$}\tdun{$k$}
+\tddeux{$k$}{$i,j$}\hspace{2mm}\otimes \tddeux{$j$}{$i$}\tdun{$k$}
+\tddeux{$j$}{$i,k$}\hspace{2mm}\otimes \tddeux{$k$}{$i$}\tdun{$j$}
+\tdun{$i,j,k$}\hspace{5mm}\otimes \pdtroisun{$i$}{$j$}{$k$},\\
\delta(\tdtroisdeux{$i$}{$j$}{$k$})&=\tdtroisdeux{$i$}{$j$}{$k$}\otimes \tdun{$i$}\tdun{$j$}\tdun{$k$}
+\tddeux{$i,j$}{$k$}\hspace{2mm}\otimes \tddeux{$i$}{$j$}\tdun{$k$}
+\tddeux{$i$}{$j,k$}\hspace{2mm}\otimes \tdun{$i$}\tddeux{$j$}{$k$}
+\tdun{$i,j,k$}\hspace{5mm}\otimes \tdtroisdeux{$i$}{$j$}{$k$}.
\end{align*}

\textbf{Remarks.} \begin{enumerate}
\item $\delta$ is the internal coproduct of \cite{FFM}.
\item $(\h_\QP,m,\delta)$ is not a Hopf algebra: for all $n\geq 1$, 
$\delta(\tdun{$n$})=\tdun{$n$}\otimes \tdun{$n$}$, and $\tdun{$n$}$ has no inverse in $\h_\QP$.
\item This coproduct is also compatible with the map $\isoclass{}$, so we obtain a bialgebra structure on $\h_\qp$ with the coproduct
defined by:
$$\delta(\isoclass{P})=\sum_{\sim \triangleleft P} \isoclass{P/\sim}\otimes \isoclass{P|\sim}.$$
\item $\h_\P$ and $\h_\p$ are not stable under $\delta$, as if $P$ is a poset and $\sim\triangleleft P$, $P/\sim$ is not necessarily a poset
(although $P|\sim$ is). However, there is a way to define a coproduct $\overline{\delta}=(\xi\otimes Id)\circ \delta$ on $\h_p$:
$$\forall P\in \P(n),\:\overline{\delta}(\isoclass{P})=\sum_{\sim\triangleleft P}=\overline{\isoclass{P/\sim}}\otimes \isoclass{P|\sim}.$$
$(\h_\p,m,\overline{\delta})$ is a quotient of $(\h_\qp,m,\delta)$ through the map $\xi$.
\end{enumerate}

\subsection{Cointeractions}

\begin{theo}
We consider the map:
\begin{align*}
\rho=(Id \otimes \isoclass{})\circ \delta&:\left\{\begin{array}{rcl}
\h_\QP&\longrightarrow&\h_\QP \otimes \h_\qp\\
P\in \QP&\longrightarrow&\displaystyle \sum_{\sim \triangleleft P} (P/\sim) \otimes \isoclass{P\mid \sim}.
\end{array}\right.
\end{align*}
The bialgebras $(\h_\QP,m,\Delta)$ and $(\h_\qp,m,\delta)$ are in cointeraction via $\rho$.
\end{theo}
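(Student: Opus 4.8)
The plan is to verify the axioms of Definition \ref{defi1} for $A=(\h_\QP,m,\Delta)$, $B=(\h_\qp,m,\delta)$ and the coaction $\rho=(Id\otimes\isoclass{\,})\circ\delta$. Most of the required conditions reduce at once to properties already established for $\delta$ and for the quotient morphism $\isoclass{\,}$, so I would dispatch these first and then concentrate on the single genuinely new compatibility, the interaction of $\Delta$ with $\rho$. For the coaction axioms: since $\isoclass{\,}$ is a morphism for the $\delta$-structure, i.e. $\delta\circ\isoclass{\,}=(\isoclass{\,}\otimes\isoclass{\,})\circ\delta$, applying $Id\otimes\isoclass{\,}\otimes\isoclass{\,}$ to the coassociativity identity $(\delta\otimes Id)\circ\delta=(Id\otimes\delta)\circ\delta$ of $\h_\QP$ yields exactly $(\rho\otimes Id)\circ\rho=(Id\otimes\delta)\circ\rho$; the counit identity $(Id\otimes\varepsilon')\circ\rho=Id$ follows from $(Id\otimes\varepsilon')\circ\delta=Id$ together with $\varepsilon'\circ\isoclass{\,}=\varepsilon'$ (discreteness being an isomorphism invariant); and $\rho(1)=1\otimes1$ is immediate. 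That $\rho$ is an algebra morphism holds because $\delta$ and $\isoclass{\,}$ both are. Finally $(\varepsilon\otimes Id)\circ\rho(P)=\sum_{\sim\triangleleft P}\varepsilon(P/\sim)\isoclass{P|\sim}$ vanishes whenever $P$ has positive degree, since $P/\sim$ has the same underlying set as $P$ and hence the same cardinality, forcing $\varepsilon(P/\sim)=0$; this gives $(\varepsilon\otimes Id)\circ\rho=\varepsilon(\cdot)1_B$.

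There remains the central identity $m^3_{2,4}\circ(\rho\otimes\rho)\circ\Delta=(\Delta\otimes Id)\circ\rho$. Writing both sides on $P\in\QP(n)$, the left-hand side sums, over $\sim\triangleleft P$ and open sets $O$ of $P/\sim$, the terms $Std((P/\sim)_{\mid[n]\setminus O})\otimes Std((P/\sim)_{\mid O})\otimes\isoclass{P|\sim}$, while the right-hand side sums, over $O'\in top(P)$ and equivalences $\sim_1\triangleleft P_{\mid[n]\setminus O'}$, $\sim_2\triangleleft P_{\mid O'}$, the terms obtained by applying $\rho$ to each tensor factor of $\Delta(P)$ and regrouping through $m^3_{2,4}$, namely $Std(P_{\mid[n]\setminus O'})/\sim_1\otimes Std(P_{\mid O'})/\sim_2\otimes\isoclass{P_{\mid[n]\setminus O'}|\sim_1}\,\isoclass{P_{\mid O'}|\sim_2}$. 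The heart of the proof is a bijection between the index sets $\{(\sim,O)\}$ and $\{(O',\sim_1,\sim_2)\}$ under which the summands coincide. The key observation is that an open set $O$ of $P/\sim$ is necessarily $\sim$-saturated, because elements of one $\sim$-class are mutually comparable in $P/\sim$ and hence lie together inside or outside any up-closed set, and $O$ is automatically open in $P$ since $\leq_{P/\sim}$ contains $\leq_P$; conversely every $\sim$-saturated open set of $P$ is open in $P/\sim$. Given such an $O$ I set $O'=O$, $\sim_1=\sim_{\mid[n]\setminus O}$, $\sim_2=\sim_{\mid O}$, and conversely glue $\sim_1,\sim_2$ into $\sim=\sim_1\sqcup\sim_2$ on $[n]$.

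To finish I would check three compatibilities for a $\sim$-saturated open set $O$, all resting on the fact that in $P/\sim$ every relation between $O$ and its complement points into $O$, so that no cycle and no transitive chain defining the contraction can cross the partition. First, $\sim\triangleleft P$ is equivalent to $\sim_1\triangleleft P_{\mid[n]\setminus O}$ together with $\sim_2\triangleleft P_{\mid O}$: the connectedness condition is local and decomposes class by class, and the condition $\sim=\sim_{P/\sim}$ decomposes because $\sim_{P/\sim}$ cannot identify an element of $O$ with one outside $O$. Second, restriction commutes with contraction on $O$ and on its complement, giving $Std((P/\sim)_{\mid[n]\setminus O})=Std(P_{\mid[n]\setminus O}/\sim_1)$ and $Std((P/\sim)_{\mid O})=Std(P_{\mid O}/\sim_2)$, so the first two tensor factors agree. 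Third, since each $\sim$-class lies entirely in $O$ or entirely in its complement, $P|\sim$ is the disjoint union $(P_{\mid[n]\setminus O}|\sim_1)\sqcup(P_{\mid O}|\sim_2)$, whose isoclass is the product $\isoclass{P_{\mid[n]\setminus O}|\sim_1}\,\isoclass{P_{\mid O}|\sim_2}$ produced by $m^3_{2,4}$.

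I expect the first compatibility, specifically showing that the condition $\sim=\sim_{P/\sim}$ splits across the partition, to be the main obstacle: it is the only place where the global transitive-closure definition of the contracted order must be controlled, and it is precisely the up-closedness and $\sim$-saturation of $O$ — forcing all cross-relations of $P/\sim$ to run from the complement into $O$ — that rule out a comparability cycle straddling the two parts and thereby make the splitting valid. Once these three points are in place, matching the summands term by term completes the verification of the cointeraction axioms.
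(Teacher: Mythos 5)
Your proposal is correct and follows essentially the same route as the paper: the axioms other than the mixed compatibility are dispatched by composition and by transporting the already-proved properties of $\delta$ through $\isoclass{\,}$, and the central identity is proved via the same bijection $(\sim,O)\leftrightarrow(O,\sim_{\mid [n]\setminus O},\sim_{\mid O})$ between $\{(\sim,O)\mid \sim\triangleleft P,\ O\in top(P/\sim)\}$ and $\{(O,\sim_1,\sim_2)\mid O\in top(P),\ \sim_1\triangleleft P_{\mid [n]\setminus O},\ \sim_2\triangleleft P_{\mid O}\}$, resting on the same three verifications (splitting of $\triangleleft$, commutation of contraction with restriction to a saturated open set, and the disjoint-union decomposition of $P|\sim$ yielding the product of isoclasses). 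Your direct observation that any open set of $P/\sim$ is $\sim$-saturated makes the paper's connected-component refinement in the definition of its map $F$ visibly vacuous, which is a mild streamlining rather than a different argument.
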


\begin{proof} By composition, $\rho$ is an algebra morphism. Let us take $P\in \QP(n)$. We put:
\begin{align*}
A&=\{(r,O)\mid r\triangleleft P, O\in top(P/r)\},&
B&=\{(O,s,s')\mid O\in top(P), s\triangleleft P_{[n]\setminus O}, s'\triangleleft P_{\mid O}\}.
\end{align*}

\textit{First step.} We define a map $F:A \longrightarrow B$, sending $(r,O)$ to $(O,s,s')$, by:
\begin{itemize}
\item $x s y$ if $x r y$ and $x,y$ are in the same connected component of $P_{\mid [n]\setminus O}$.
\item $x s' y$ if $x r y$ and $x,y$ are in the same connected component of $P_{\mid O}$.
\end{itemize}
Let us prove that $F$ is well-defined. Let us take $x, y\in [n]$, with $x\in O$ and $x\leq_P y$. Then $x \leq_{P/r} y$.
as $O$ is an open set of $P/r$, $y\in O$: $O$ is an open set of $P$. By definition, the equivalence classes of $s'$ are 
the intersection of the equivalence classes of $r$ and of the connected components of $O$. As $O$ is a union of equivalence classes of $r$,
they are $P_{\mid O}$-connected. If $x \sim_{P_{\mid O}/s'} y$, then $x \sim_{P/r} y$ and $x$ and $y$ are in the same connected component of $O$.
As $r\triangleleft r$, $x r y$, so $x s' y$: $s'\triangleleft P_{\mid O}$. Similarly, $s\triangleleft P_{\mid [n]\setminus O}$.\\

\textit{Second step.} We define a map $G:B \longrightarrow A$, sending $(O,s,s')$ to $(O,r,r')$, by:
$$x r y \mbox{ if }(x,y\notin O\mbox{ and }x s y)\mbox{ or }(x,y\in O\mbox{ and }x s' y).$$
Let us prove that $G$ is well-defined. Let $x,y\in [N]$, with $x\in O$ and $x\leq_{P/r} y$. There exists $x_1,x'_1,\ldots,x_k,x'_k$ such that:
$$x \leq_P x_1 r x'_1 \leq_P\ldots \leq_P x_k rx'_k \leq_P y.$$
As $O$ is an open set of $P$, $x_1 \in O$; by definition of $r$, $x'_1\in O$. Iterating, we obtain that $x_2,x'_2,\ldots,x_k,x'_k,y\in O$. So $O$ is open in $P/r$.

Let us assume that $x r y$. Then $x,\in O$ or $x,y\notin O$. As $s\triangleleft P_{\mid [n]\setminus O}$ and $P_{\mid O}$,
there exists a path from $x$ to $y$ in the Hasse graph of $P$ formed by elements $s$- or $s'-$ equivalent to $x$ and $y$,
so the equivalence classes of $r$ are $P$-connected. 

Let us assume that $x \sim_{P/r} y$. here exists $x_1,x'_1,\ldots,x_k,s'_k$ and $y_1,y'_1,\ldots,y_l,y_l$ such that:
\begin{align*}
&x \leq_P x_1 r x'_1 \leq_P\ldots \leq_P x_k r x'_k \leq_P y,&
&y \leq_P y_1 r y'_1 \leq_P\ldots \leq_P y_l r y'_l \leq_P x.
\end{align*}
If $x,y\in O$, then all these elements are in $O$, so $x \sim_{P_{\mid O}/s'} y$, and then $x s' y$, so $x r y$.
If $x,y\notin O$, as $O$ is an open set, none of these elements is in $O$, so $x \sim_{P_{\mid [n]\setminus O}/s} y$,
so $x s y$ and finally $x r y$: $r\triangleleft P$.\\

\textit{Third step.} Let $(r,O)\in A$. We put $F(r,O)=(O,s,s')$ and $G(O,s,s')=(\tilde{r},O)$. If $x r y$, as $O$ is an open set of $P/r$,
both $x$ and $y$ are in $O$ or both are not in $O$. Hence, $x s y$ or $x s' y$, so $x \tilde{r} y$. 

If $x \tilde{r} y$, then $x s y$ or $x s' y$, so $x r y$: $\tilde{r}=r$ and $G\circ F=Id_A$.\\

Let $(O,s,s')\in B$. We put $G(O,s,s')=(r,O)$ and $F(r,O)=(O,\tilde{s},\tilde{s}')$. If $x s y$, then $x$ and $y$ are in the same connected component
of $[n]\setminus O$ as $s \triangleleft P_{\mid [n]\setminus O}$ and $x r y$, so $x \tilde{s} y$.
If $x \tilde{s} y$, then $x r y$, so $x s y$: we obtain that $\tilde{s}=s$. Similarly, $\tilde{s}'=s'$, which proves that $F\circ G=Id_B$.\\

We proved that $F$ and $G$ are inverse bijections. Let $(r,O)\in A$ and $(O,s,s')=F(O,r)$. 
\begin{align*}
\leq_{(P/r)_{\mid [n]\setminus O}}&=\mbox{transitive closure of ($x r y$ and $x \leq_P y$) restricted to $[n]\setminus O$}\\
&=\mbox{transitive closure of ($x r y$ and $x \leq_{P_{\mid [n]\setminus O}} y$)}\\
&=\mbox{transitive closure of ($x s y$ and $x \leq_{P_{\mid [n]\setminus O}} y$)}\\
&=\leq_{P_{\mid [n]\setminus [n]}/s}.
\end{align*}
So $(P/r)_{\mid [n]\setminus O}=P_{\mid [n]\setminus O}/s$. Similarly, $(P/r)_{\mid O}=P_{\mid O}/s'$.

Let us now consider $P_{\mid R}$. Its connected components are the equivalence classes of $r$, that is to say the equivalence classes of $s$ and $s'$;
for any such equivalence class $I$, $(P_{\mid R})_{\mid I}=P_{\mid I}$. So $P_{\mid R}$ is the disjoint union of $(P_{\mid [n]\setminus O})_{\mid s}$
and $(P_{\mid O})_{\mid s'}$, and therefore is isomorphic to $Std(P_{\mid [n]\setminus O})_{\mid s})Std((P_{\mid O})_{\mid s'})$,
but not equal, because of the reindexation induced by the standardization. 
Hence, $\isoclass{P_{\mid R}}=\isoclass{(P_{\mid [n]\setminus O})_{\mid s}} \isoclass{(P_{\mid O})_{\mid s'}}$.

Finally:
\begin{align*}
(\Delta \otimes Id)\circ \rho(P)&=\sum_{(r,O)\in A} (G/r)_{\mid [n]\setminus O}\otimes (G/r)_{\mid O} \otimes \isoclass{G_{|r}}\\
&=\sum_{(O,s,s')\in B}(P_{\mid [n]\setminus O})/s\otimes (P_{\mid O})/s'\otimes \isoclass{(P_{\mid [n]\setminus O})_{\mid s}}
\isoclass{(P_{\mid O})_{\mid s'}}\\
&=m^3_{2,4}\circ (\rho \otimes \rho)\circ \Delta(P).
\end{align*}
Moreover, $(\varepsilon\otimes Id)\circ \rho(P)=\delta_{P,1}1\otimes 1=\varepsilon(P)1\otimes 1$. \end{proof}\\

\textbf{Remark.} As noticed in \cite{FFM}, $(\h_\QP,m,\Delta)$ and $(\h_\QP,m,\delta)$ are not in cointeraction through $\delta$.\\

Taking the quotient through $\isoclass{}$:

\begin{cor}
The bialgebras $(\h_\qp,m,\Delta)$ and $(\h_\qp,m,\delta)$ are in cointeraction via $\delta$.
Moreover, $\h_\qp$ si given a graduation by:
$$\forall n\geq 0, (\h_\qp)_n=Vect(P\in \qp\mid cl(P)=n).$$
With this graduation, hypotheses 1--5 of section \ref{sect1-3} are satisfied.
\end{cor}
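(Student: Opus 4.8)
The plan is to obtain the corollary entirely by transporting the structures already built on $\h_\QP$ through the surjection $\isoclass{}$, and then to check hypotheses 1--5 by hand. First I would establish the cointeraction. The preceding theorem gives that $(\h_\QP,m,\Delta)$ and $(\h_\qp,m,\delta)$ are in cointeraction via $\rho=(Id\otimes\isoclass{})\circ\delta$. Since $\isoclass{}:\h_\QP\to\h_\qp$ is a surjective morphism of Hopf algebras for $(m,\Delta)$ and of bialgebras for $(m,\delta)$, and since $(\isoclass{}\otimes\isoclass{})\circ\delta=\delta\circ\isoclass{}$, one has $(\isoclass{}\otimes Id)\circ\rho=\delta\circ\isoclass{}$, so that $\rho$ descends to $\delta$ on $\h_\qp$. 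Each of the four axioms of Definition \ref{defi1} is an equality of maps out of $\h_\QP$ (or $\h_\QP\otimes\h_\QP$); pushing them through $\isoclass{}$ and using that $\isoclass{}$ is onto and intertwines $m$, $\Delta$, $\rho$ and the counits, they pass to the quotient and yield the cointeraction of $(\h_\qp,m,\Delta)$ and $(\h_\qp,m,\delta)$ via $\delta$. This is hypothesis 3, while hypotheses 1 and 2 are the already-proved facts that $(\h_\qp,m,\Delta)$ is a Hopf algebra and $(\h_\qp,m,\delta)$ a bialgebra.

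Next I would check that $cl$ defines the required grading. Disjoint union gives $cl(PQ)=cl(P)+cl(Q)$, and since an open set $O$ is saturated for $\sim_P$ one has $cl(P_{\mid A\setminus O})+cl(P_{\mid O})=cl(P)$, so $\Delta$ is graded; the only quasi-poset with $cl=0$ is empty, so the Hopf algebra is connected. As $\h_\qp$ is the polynomial algebra on the connected quasi-posets, as a graded algebra it is $S(V)$ with $V$ the span of the connected quasi-posets, graded by $cl$; this is hypothesis 1. For hypothesis 4, $V_1=(\h_\qp)_1$ is spanned by the classes $g_n$ ($n\ge1$) of the quasi-posets having a single $\sim_P$-class of cardinality $n$, and the remark following the bialgebra $(\h_\QP,m,\delta)$ gives $\delta(g_n)=g_n\otimes g_n$; the $g_\alpha$ are then exactly the discrete quasi-posets.

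The heart of the proof is hypothesis 5, for which I would analyse the extreme terms of $\delta(P)$ for a connected $P$ with $cl(P)=n\ge2$. I would first show that $\sim_P$ is the finest and the total equivalence the coarsest element of $\{\sim\triangleleft P\}$: a $\sim_P$-class cannot be split while staying compatible, since its elements are mutually $\leq_P$-comparable and any splitting is merged again in $\sim_{P/\sim}$; and for connected $P$ the coarsest compatible equivalence has a single block. The term coming from the total equivalence is $g_{|P|}\otimes P$ (here $P/\!\sim$ contracts to one class of cardinality $|P|$, and $P|\!\sim=P$), and the term coming from $\sim_P$ is $P\otimes\isoclass{P|\sim_P}$ with $\isoclass{P|\sim_P}$ discrete, hence a group-like $g_\alpha$. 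For every remaining $\sim$ I would invoke the two structural remarks on $P$-compatibility: $P/\sim$ is connected with $cl(P/\sim)=cl(\sim)\le n-1$ (as $\sim\ne\sim_P$), so it lies in $V_1\oplus\dots\oplus V_{n-1}$, while the connected components of $P|\sim$ are the $\sim$-classes, each a proper subfamily of the $\sim_P$-classes (as $\sim$ is not total), hence each of $cl\le n-1$, so $P|\sim\in S(V_1\oplus\dots\oplus V_{n-1})$. This yields
\[
\delta(P)-g_{|P|}\otimes P-P\otimes\isoclass{P|\sim_P}\in S(V_1\oplus\dots\oplus V_{n-1})^{\otimes2},
\]
so putting each connected $P$ with $cl(P)=n$, $|P|=i$ and $\isoclass{P|\sim_P}=g_\alpha$ into $V_n(g_i,g_\alpha)$ gives the decomposition of hypothesis 5.

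The main obstacle I anticipate is precisely this last step: one must be certain that the two subtracted terms are the \emph{only} ones whose left-hand (resp. right-hand) tensor factor is a single connected piece of full degree $n$. This rests on the extremality of $\sim_P$ and of the total equivalence, combined with the fact that $cl(P|\sim)=cl(P)$ forces the attention onto the connected components of $P|\sim$ rather than onto its total number of classes. Once those extremal equivalences are correctly identified, the remaining degree bookkeeping is routine.
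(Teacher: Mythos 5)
Your proof is correct and takes essentially the same route as the paper: the paper obtains the cointeraction on $\h_\qp$ precisely by passing the $\h_\QP$-level theorem through the quotient $\isoclass{}$, and its one-line proof of the corollary merely identifies $V$ as the span of the connected quasi-posets and the group-like basis of $(\h_\qp)_1$ as $(\tdun{$n$}\,)_{n\geq 1}$, leaving hypotheses 1--5 as implicit verifications. Your explicit check of hypothesis 5 via the two extremal compatible equivalences (the total one giving $g_{|P|}\otimes P$, and $\sim_P$ giving $P\otimes g_\alpha$, with $\sim_P\subseteq\sim$ forcing all other terms into degrees $\leq n-1$ on both sides) is exactly the bookkeeping the paper omits, and it is consistent with the paper's earlier three-case analysis of $\sim\triangleleft P$ in the counit computation.
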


\begin{proof} Here, $V$ is the space generated by the set of connected quasi-posets; the basis $(g_i)_{i\in I}$ of group-like elements
of $(\h_\qp)_1$ is $(\tdun{$n$})_{n\geq 1}$. \end{proof}\\

We denote by $M_\qp$ the monoid of characters of $(\h_\qp,m,\delta)$. Its product is denoted by $*$.
Using proposition \ref{prop4} on $\h_\QP$:

\begin{cor} \label{cor20}
Let $\lambda\in M_\qp$. The following map is a Hopf algebra endomorphism:
\begin{align*}
\phi_\lambda&:\left\{\begin{array}{rcl}
(\h_\QP,m,\Delta)&\longrightarrow&(\h_\QP,m,\Delta)\\
P\in \QP&\longrightarrow&\displaystyle \sum_{\sim \triangleleft P} \lambda_{\isoclass{P\mid \sim}} P/\sim.
\end{array}\right.
\end{align*}
It is bijective if, and only if, for all $n\geq 1$, $\lambda_{\tdun{$n$}}\neq 0$. If this holds, $\phi_{\lambda}^{-1}=\phi_{\lambda^{*-1}}$.
\end{cor}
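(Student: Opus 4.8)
The plan is to recognize $\phi_\lambda$ as the endomorphism $Id\leftarrow\lambda$ produced by the action $\leftarrow$ of Proposition \ref{prop4}, and then to deduce everything from the general statements already proved together with Lemma \ref{lem5}.

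First I would note that, for $\rho=(Id\otimes \isoclass{})\circ\delta$ the coaction making $(\h_\QP,m,\Delta)$ and $(\h_\qp,m,\delta)$ cointeracting bialgebras, one has
$$
(Id\otimes\lambda)\circ\rho(P)=\sum_{\sim\triangleleft P}(P/\sim)\,\lambda(\isoclass{P\mid\sim})=\phi_\lambda(P),
$$
so that $\phi_\lambda=Id\leftarrow\lambda$. Since the identity lies in $E_{\h_\QP\rightarrow\h_\QP}$, the first point of Proposition \ref{prop4} guarantees that $Id\leftarrow\lambda$ is again a bialgebra endomorphism of $(\h_\QP,m,\Delta)$; because $\h_\QP$ is a Hopf algebra, it is automatically a Hopf algebra endomorphism. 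This disposes of the first assertion with no extra work.

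For the invertibility statement I would combine the monoid-morphism property with Lemma \ref{lem5}. By the third point of Proposition \ref{prop4}, $\phi_\lambda\circ\phi_\mu=Id\leftarrow(\lambda*\mu)=\phi_{\lambda*\mu}$ for all $\lambda,\mu\in M_\qp$, and moreover $\phi_{\varepsilon'}=Id\leftarrow\varepsilon'=(Id\otimes\varepsilon')\circ\rho=Id$ by counitality of the coaction. Hence, if $\lambda$ is invertible in $M_\qp$ with inverse $\lambda^{*-1}$, then $\phi_\lambda\circ\phi_{\lambda^{*-1}}=\phi_{\lambda*\lambda^{*-1}}=\phi_{\varepsilon'}=Id$ and symmetrically $\phi_{\lambda^{*-1}}\circ\phi_\lambda=Id$, so $\phi_\lambda$ is bijective with $\phi_\lambda^{-1}=\phi_{\lambda^{*-1}}$. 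By Lemma \ref{lem5}, the invertibility of $\lambda$ in $M_\qp$ is equivalent to $\lambda_{\tdun{$n$}}\neq 0$ for every $n\geq 1$, the group-like generators being exactly the $\tdun{$n$}$; this yields one direction of the equivalence together with the formula for the inverse.

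It remains to prove the converse implication, namely that bijectivity forces $\lambda_{\tdun{$n$}}\neq 0$ for all $n$, and this is the only point requiring a direct computation. I would argue by contraposition: let $P_n\in\QP(n)$ be the quasi-poset on $[n]$ in which all elements are equivalent, so that $\isoclass{P_n}=\tdun{$n$}$ and $P_n$ is group-like for $\delta$. The only $P_n$-compatible equivalence is the total one, for which $P_n/\sim=P_n\mid\sim=P_n$, whence $\phi_\lambda(P_n)=\lambda_{\tdun{$n$}}\,P_n$. Thus $\lambda_{\tdun{$n$}}=0$ makes $\phi_\lambda(P_n)=0$ and destroys injectivity, completing the equivalence. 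The main (and mild) obstacle is precisely the identification $\phi_\lambda=Id\leftarrow\lambda$ and the verification $\phi_{\varepsilon'}=Id$; once these are secured, the statement is a direct consequence of Proposition \ref{prop4} and Lemma \ref{lem5}, and no further manipulation of compatible equivalences is needed.
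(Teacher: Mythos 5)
Your proof is correct and follows essentially the same route as the paper: identify $\phi_\lambda=Id\leftarrow\lambda$, apply Proposition \ref{prop4} for the morphism and composition properties together with Lemma \ref{lem5} for invertibility of $\lambda$, and evaluate on the quasi-posets with isoclass $\tdun{$n$}$ (which are eigenvectors with eigenvalue $\lambda_{\tdun{$n$}}$) for the converse. Your only additions are making explicit the verifications $\phi_\lambda=(Id\otimes\lambda)\circ\rho$ and $\phi_{\varepsilon'}=Id$, which the paper leaves implicit.
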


\begin{proof} $\phi_\lambda=Id\leftarrow \lambda$, so is an element of $E_{\h_\QP\rightarrow\h_\QP}$.\\

$\Longrightarrow$. For all $n\geq 0$, $\phi_\lambda(\tdun{$n$})=\lambda_{\tdun{$n$}}\tdun{$n$}$.
As $\phi_\lambda$ is injective,  $\lambda_{\tdun{$n$}}\neq 0$.\\

$\Longleftarrow$. By proposition \ref{lem5}, $\lambda$ is invertible in $M_\qp$: let us denote its inverse by $\mu$.
Then, by proposition \ref{prop4}:
\begin{align*}
\phi_\lambda \circ \phi_\mu&=Id\leftarrow(\lambda*\mu)=Id \leftarrow \varepsilon'=Id.
\end{align*}
Similarly, $\phi_\mu\circ \phi_\lambda=Id$. \end{proof}

\section{Ehrhart polynomials}

\textbf{Notations.} For all $k\geq 0$, we denote by $H_k$ the $k$-th Hilbert polynomial:
$$H_k(X)=\frac{X(X-1)\ldots (X-k+1)}{k!}.$$

\subsection{Definition}

\begin{defi} \label{defi21}
Let $P\in \QP(n)$ and $k\geq 1$. We put:
\begin{align*}
L_P(k)&=\{f:[n]\longrightarrow [k]\mid \forall i,j\in [n],i\leq_P j\Longrightarrow f(i)\leq f(j)\},\\
L^{str}_P(k)&=\{f\in L_P(k)\mid \forall i,j\in [n],(i\leq_P j\mbox{ and } f(i)=f(j))\Longrightarrow i\sim_P j\},\\
W_P(k)&=\{w\in L_P(k)\mid w([n])=[k]\},\\
W^{str}_P(k)&=\{w\in L^{str}_P(k)\mid w([n])=[k]\}.
\end{align*}
By convention:
$$L_P(0)=L^{str}_P(0)=W_P(0)=W_P^{str}(0)=\begin{cases}
\emptyset \mbox{ if }P\neq 1,\\
\{1\}\mbox{ if }P=1.
\end{cases}$$
We also put:
\begin{align*}
L_P&=\bigcup_{k\geq 0} L_P(k),&
L^{str}_P&=\bigcup_{k\geq 0} L^{str}_P(k),&
W_P&=\bigsqcup_{k\geq 0} W_P(k),&
W^{str}_P&=\bigsqcup_{k\geq 0} W^{str}_P(k).
\end{align*}
Note that the elements of $W_P$ and $W_P^{str}$ are packed words (see definition \ref{defi40}).
\end{defi}

\begin{prop} \label{prop22}
Let $P\in \QP$. There exist unique polynomials $ehr_P$ and $ehr^{str}_P\in \mathbb{Q}[X]$, such that for $k\geq 0$:
\begin{align*}
ehr_P(k)&=\sharp L_P(k),&
ehr^{str}_P(k)&=\sharp L^{str}_P(k).
\end{align*} \end{prop}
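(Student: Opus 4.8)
The plan is to count $L_P(k)$ by grouping the order-preserving maps according to their image, which reduces everything to counting the surjective maps in $W_P$ and reading off a polynomial whose coefficients are the integers $\sharp W_P(m)$.

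First I would set up the \emph{packing decomposition}. Fix $P\in\QP(n)$ and $k\geq 1$. Given $f\in L_P(k)$, let $S=f([n])\subseteq[k]$, let $m=\sharp S$, and let $\iota_S:[m]\longrightarrow[k]$ be the unique increasing bijection onto $S$. Then $w=\iota_S^{-1}\circ f:[n]\longrightarrow[m]$ is surjective, and since $\iota_S$ (hence $\iota_S^{-1}$) is strictly increasing, $w$ is still order-preserving for $\leq_P$; thus $w\in W_P(m)$. Conversely, every pair consisting of $w\in W_P(m)$ and an increasing injection $[m]\hookrightarrow[k]$ yields by composition an element of $L_P(k)$ whose image has cardinality $m$, and these two constructions are mutually inverse. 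As the increasing injections $[m]\hookrightarrow[k]$ correspond bijectively to the $m$-subsets of $[k]$, there are exactly $\binom{k}{m}$ of them, so I obtain
$$\sharp L_P(k)=\sum_{m=0}^{n}\sharp W_P(m)\binom{k}{m},$$
the sum stopping at $n$ because a surjection from an $n$-element set forces $m\leq n$.

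Next I would turn this into a polynomial identity. Recalling $H_m(X)=\frac{X(X-1)\cdots(X-m+1)}{m!}$, one has $H_m(k)=\binom{k}{m}$ for every integer $k\geq 0$, so the polynomial
$$ehr_P(X)=\sum_{m=0}^{n}\sharp W_P(m)\,H_m(X)\in\mathbb{Q}[X]$$
satisfies $ehr_P(k)=\sharp L_P(k)$ for all $k\geq 1$, and also for $k=0$ once one matches the stated convention for $L_P(0)$ against the values $H_m(0)=\delta_{m,0}$. Uniqueness is then immediate: two polynomials agreeing at the infinitely many points $k\geq 0$ must coincide.

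For $ehr^{str}_P$ the same argument applies verbatim, with $W_P$ replaced by $W^{str}_P$. The only extra point to verify is that packing respects the strict condition, which it does precisely because $\iota_S$ is injective: for $i\leq_P j$ one has $w(i)=w(j)\Longleftrightarrow f(i)=f(j)$, so the implication ``$f(i)=f(j)\Rightarrow i\sim_P j$'' transfers between $f$ and its packing $w$ in both directions. Hence $f\in L^{str}_P(k)$ if and only if its packing lies in $W^{str}_P(m)$, giving $\sharp L^{str}_P(k)=\sum_{m=0}^{n}\sharp W^{str}_P(m)\binom{k}{m}$ and $ehr^{str}_P(X)=\sum_{m=0}^{n}\sharp W^{str}_P(m)\,H_m(X)$. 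The main (and essentially only) obstacle is establishing the packing bijection cleanly, in particular checking that order-preservation and the strict condition are stable under pre- and post-composition with increasing maps; once that is in place, polynomiality and uniqueness are formal.
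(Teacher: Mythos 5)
Your proposal is correct and follows essentially the same route as the paper: the paper's proof also rests on the identity $\sharp L_P(k)=\sum_{i} \sharp W_P(i)\binom{k}{i}$ obtained by grouping maps according to their image, and then expresses $ehr_P$ and $ehr^{str}_P$ in the Hilbert polynomials $H_i(X)$, checking the $k=0$ convention separately. You merely spell out the packing bijection and the transfer of the strict condition in more detail than the paper, which states the counting identity directly.
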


\begin{proof}  This is obvious if $P=1$, with $ehr_1(X)=ehr_1^{str}(X)=1$.
Let us assume that $P\in \QP(n)$, $n\geq 1$. Note that if $i>n$, $W_P(i)=0$. For all $k\geq 1$:
$$\sharp L_P(k)=\sum_{i=1}^k \sharp W_P(i) \binom{k}{i}=\sum_{i=1}^k \sharp W_P(i)H_i(k)=\sum_{i=1}^n \sharp W_P(i)H_i(k).$$
So:
$$ehr_P(X)=\displaystyle \sum_{i=1}^n \sharp W_P(i)H_i(X).$$
Moreover, if $k=0$:
$$ehr_P(0)=\sum_{i=1}^n \sharp W_P(i)H_i(0)=\sharp L_P(0).$$
In the same way:
$$ehr^{str}_P(X)=\displaystyle \sum_{i=1}^n \sharp W^{str}_P(i)H_i(X).$$
These are indeed elements of $\mathbb{Q}[X]$. \end{proof}\\

\textbf{Remarks.} \begin{enumerate}
\item Let $P,Q \in \QP(n)$. 
\begin{itemize}
\item If they are isomorphic, then $ehr_P(k)=ehr_Q(k)$ for all $k\geq 1$, so $ehr_P=ehr_Q$. 
\item If $w\in L_P$, for all $x,y\in P$ such that $x\sim_P y$, then $w(x)\leq w(y)$ and $w(y)\leq w(x)$, so $w(x)=w(y)$:
$w$ goes through the quotient by $\sim_P$. We obtain in this way a bijection from $L_P(k)$ to $L_{\overline{P}}(k)$ for all $k$, 
so $ehr_P=ehr_{\overline{P}}$. Similarly, $ehr^{str}_P=ehr^{str}_{\overline{P}}$.
\end{itemize}
Hence, we obtain maps, all denoted by $ehr$ and $ehr^{str}$, such the following diagrams commute:
\begin{align*}
&\xymatrix{\h_\QP\ar@{->>}^{\isoclass{}}[r] \ar[rd]_{ehr}&\h_\qp\ar@{->>}^{\xi}[r] \ar[d]^{ehr}&\h_\p \ar[ld]^{ehr}\\
&\K[X]}&&
&\xymatrix{\h_\QP\ar@{->>}^{\isoclass{}}[r] \ar[rd]_{ehr^{str}}&\h_\qp\ar@{->>}^{\xi}[r] \ar[d]^(.44){ehr^{str}}&\h_\p \ar[ld]^{ehr^{str}}\\
&\K[X]}&
\end{align*}
\item Let $P \in \P(n)$. The classical definition of the Ehrhart polynomial $ehr^{cl}(t)$ is the number of of integral points of $tPol(P)$, 
where $Pol(P)$ is the polytope associated to $P$. Hence, $ehr^{cl}(X)=ehr(X+1)$. 
\end{enumerate}

\begin{theo} \label{theo23}
The morphisms $ehr,ehr^{str}:\h_\QP,\h_\qp,\h_\p\longrightarrow\K[X]$ are Hopf algebra morphisms. 
\end{theo}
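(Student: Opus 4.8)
Since $\isoclass{}:\h_\QP\to\h_\qp$ and $\xi:\h_\qp\to\h_\p$ are surjective Hopf algebra morphisms through which $ehr$ and $ehr^{str}$ factor (as recorded in the remark preceding the statement), it suffices to prove the theorem for the maps defined on $\h_\QP$; surjectivity then transports the morphism property to $\h_\qp$ and $\h_\p$. The plan is to verify separately that $ehr$ is an algebra morphism, a coalgebra morphism, and compatible with units and counits, and then to observe that each argument applies verbatim to $ehr^{str}$.

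For the algebra morphism property I would use that the product $PQ$ of $P\in\QP(k)$ and $Q\in\QP(l)$ carries no order relation between its two blocks. Hence a map $f\in L_{PQ}(m)$ is nothing but a pair given by $f$ restricted to $[k]$ (an element of $L_P(m)$) and the shift of $f$ restricted to $\{k+1,\dots,k+l\}$ (an element of $L_Q(m)$), so $\sharp L_{PQ}(m)=\sharp L_P(m)\,\sharp L_Q(m)$ for every $m\geq 1$. Two polynomials agreeing on all positive integers coincide, whence $ehr_{PQ}=ehr_P\,ehr_Q$; the strict condition likewise imposes nothing across the blocks, giving $ehr^{str}_{PQ}=ehr^{str}_P\,ehr^{str}_Q$.

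The main point is the coalgebra morphism property. Fixing $P\in\QP(n)$, I would prove for all integers $k,l\geq 1$ the numerical identity
$$ehr_P(k+l)=\sum_{O\in top(P)} ehr_{P_{\mid[n]\setminus O}}(k)\,ehr_{P_{\mid O}}(l).$$
The key observation is that, given $f\in L_P(k+l)$, the preimage $O=f^{-1}(\{k+1,\dots,k+l\})$ is an \emph{open} set of $P$: if $i\in O$ and $i\leq_P j$, then $f(i)\leq f(j)$ forces $f(j)\geq k+1$, so $j\in O$. Conversely, for fixed $O\in top(P)$ the maps $f\in L_P(k+l)$ with that preimage correspond bijectively to pairs $(g,h)$ with $g\in L_{P_{\mid[n]\setminus O}}(k)$ and $h\in L_{P_{\mid O}}(l)$ (identifying $\{k+1,\dots,k+l\}$ with $[l]$): openness of $O$ means the only relations joining the two parts have the form $i\leq_P j$ with $i\notin O$, $j\in O$, and these are automatically respected since the lower part lands in $[k]$ and the upper part in $\{k+1,\dots,k+l\}$. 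Summing over $O$ yields the identity. As $ehr$ depends only on the isoclass and is therefore unchanged by standardization, the right-hand side is exactly $\big((ehr\otimes ehr)\circ\Delta(P)\big)(k,l)$ under the identification of $\K[X]\otimes\K[X]$ with $\K[X,Y]$, in which $\Delta(R)(X,Y)=R(X+Y)$. Holding for all $k,l\geq 1$, it holds as a polynomial identity, so $\Delta\circ ehr=(ehr\otimes ehr)\circ\Delta$. The same bijection works for $ehr^{str}$: within each block the strict condition restricts correctly because $\sim_{P_{\mid O}}$ is the restriction of $\sim_P$, while any cross relation $i\leq_P j$ (with $j\not\leq_P i$) across $O$ and its complement automatically gives $f(i)<f(j)$.

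Finally, compatibility with units is $ehr_1=1=ehr^{str}_1$, and compatibility with counits reduces, using $\varepsilon_{\K[X]}(R)=R(0)$ and that $\varepsilon_{\h_\QP}$ is the projection onto degree $0$, to the equalities $ehr_P(0)=ehr^{str}_P(0)=0$ for $P\neq 1$; these follow from the convention $L_P(0)=L^{str}_P(0)=\emptyset$ in Definition \ref{defi21} together with $ehr_P(0)=\sharp L_P(0)$ from the proof of Proposition \ref{prop22}. The step I expect to be the real obstacle is the coalgebra identity, whose whole content is the recognition that the level set $f^{-1}(\{k+1,\dots,k+l\})$ of an order-preserving map is precisely an open set of $top(P)$; once that is secured the bijection between $L_P(k+l)$ and $\bigsqcup_{O} L_{P_{\mid[n]\setminus O}}(k)\times L_{P_{\mid O}}(l)$ is forced, and the passage to a polynomial identity is routine.
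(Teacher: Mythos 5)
Your proof is correct and follows essentially the same route as the paper: the same key bijection $f\mapsto(f_{\mid[n]\setminus O},f_{\mid O})$ indexed by the open level set $O=f^{-1}(\{k+1,\ldots,k+l\})$, the same block-wise argument for multiplicativity, and the same counit check via $ehr_P(0)=\sharp L_P(0)$. The only (immaterial) difference is that you run the argument directly on $\h_\QP$ and push forward along the surjections, whereas the paper reduces to $\h_\p$ first.
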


\begin{proof} It is enough to prove it for $ehr,ehr^{str}:\h_\p\longrightarrow \K[X]$.\\

\textit{First step.} Let $P\in \P(n)$. Let us prove that for all $k,l\geq 0$:
\begin{align*}
ehr_P(k+l)&=\sum_{O\in Top(P)} ehr_{P_{\mid [n]\setminus O}}(k) ehr_{P_{\mid O}}(l),&
ehr^{str}_P(k+l)&=\sum_{O\in Top(P)} ehr^{str}_{P_{\mid [n]\setminus O}}(k) ehr^{str}_{P_{\mid O}}(l).
\end{align*}

Let $f\in L_P(k+l)$. We put $O=f^{-1}(\{k+1,\ldots,k+l\})$. If $x\in O$ and $x\leq_P y$, then $f(x)\leq f(y)$, so $y\in O$: $O$ is an open set of $P$. 
By restriction, the following maps are elements of respectively $L_{P_{\mid [n]\setminus O}}(k)$ and $L_{P_{\mid O}}(l)$:
\begin{align*}
f_1&:\left\{\begin{array}{rcl}
[n]\setminus O&\longrightarrow&[k]\\
x&\longrightarrow&f(x),
\end{array}\right.&
f_2&:\left\{\begin{array}{rcl}
O&\longrightarrow&[l]\\
x&\longrightarrow&f(x)-k.
\end{array}\right.
\end{align*}
This defines a map:
$$\upsilon:\left\{\begin{array}{rcl}
L_P(k+l)&\longrightarrow&\displaystyle\bigsqcup_{O\in Top(P)} L_{P_{\mid [n]\setminus O}}(k)\times L_{P_{\mid O}}(l)\\
f&\longrightarrow&(f_1,f_2).
\end{array}\right.$$
This map is clearly injective; moreover:
$$\nu(L^{str}_P(k+l))\subseteq \bigsqcup_{O\in Top(P)} L^{str}_{P_{\mid [n]\setminus O}}(k)\times L^{str}_{P_{\mid O}}(l).$$
Let us prove that $f$ is surjective. Let $(f_1,f_2)\in L_{P_{\mid [n]\setminus O}}(k)\otimes L_{P_{\mid O}}(l)$, with $O\in Top(P)$. We define a map
$f:P\longrightarrow [k+l]$ by:
$$f(x)=\begin{cases}
f_1(x)\mbox{ if }x\notin O,\\
f_2(x)+k\mbox{ if }x\in O.
\end{cases}$$
Let $x\leq_P i$. As $O$ is an open set of $P$, three cases are possible:
\begin{itemize}
\item $x,y\notin O$: then $f_1(x)\leq f_1(y)$, so $f(x)\leq f(y)$.
\item $x,y\in O$: then $f_2(x)\leq f_2(y)$, so $f(x)\leq f(y)$.
\item $x\notin O$, $y\in O$: then $f(x)\leq k<fyj)$.
\end{itemize}
So $f\in L_P(k+l)$, and $\upsilon(f)=(f_1,f_2)$: $\upsilon$ is surjective, and finally bijective.
Moreover, if $f_1\in L^{str}_{[n]\setminus O}(k)$ and $f_2\in L^{str}_{P_{\mid O}}(l)$, then $f=\upsilon^{-1}(f_1,f_2)\in L^{str}_P(k+l)$.
Finally:
\begin{align*}
f(L_P(k+l))&=\displaystyle\bigsqcup_{O \in Top(P)} L_{P_{\mid [n]\setminus O}}(k)\times L_{P_{\mid O}}(l),\\
f(L^{str}_P(k+l))&=\displaystyle\bigsqcup_{O\in Top(P)} L^{str}_{P_{\mid [n]\setminus O}}(k)\times L^{str}_{P_{\mid O}}(l).
\end{align*}
Taking the cardinals, we obtain the announced result.\\

\textit{Second step.} Let $P\in \P(m)$, $Q\in \P(n)$, and $f:[m+n]\longrightarrow [k]$. Then $f\in L_{PQ}(k)$
if, and only if, $f_{\mid [m]} \in L_P(k)$ and $Std(f_{\mid [m+n]\setminus [m]}\in L_Q(k)$. So $ehr_{PQ}(k)=ehr_P(k)ehr_Q(k)$,
and then $ehr_{PQ}(X)=ehr_P(X)ehr_Q(X)$: $ehr$ is an algebra morphism. 

Let $P$ be a finite poset, and $k,l \geq 0$. By the first step:
\begin{align*}
(ehr\otimes ehr)\circ \Delta(P)(k,l)&=\sum_{O\in Top(P)} ehr_{P_{\mid [n]\setminus O}}(k) ehr_{P_{\mid O}}(l)\\
&=ehr_P(k+l)\\
&=\Delta \circ ehr(P)(k,l).
\end{align*}
As this is true for all $k,l\geq 1$, $(ehr \otimes ehr)\circ \Delta(P)=\Delta \circ ehr(P)$. Moreover:
$$\varepsilon\circ ehr(P)=ehr_P(0)=\begin{cases}
1\mbox{ if }P=1,\\
0\mbox{ otherwise},
\end{cases}$$
so $\varepsilon \circ ehr=\varepsilon$. \\

 The proof is similar for $ehr^{str}$. \end{proof}

\subsection{Recursive computation of $ehr$ and $ehr^{str}$}

Let us recall this classical result:

\begin{lemma}
We consider the following maps:
$$L:\left\{\begin{array}{rcl}
\K[X]&\longrightarrow&\K[X]\\
H_k(X)&\longrightarrow&H_{k+1}(X).
\end{array}\right.$$
The map $L$ is injective, and $L(\K[X])=\K[X]_+$. Moreover, for all $P\in \K[X]$, for all $n\geq 0$:
$$L(P)(n+1)=P(0)+\ldots+P(n).$$
\end{lemma}

\begin{proof} Let us consider $P=H_k(X)$. For all $n \geq 0$:
\begin{align*}
H_k(0)+\ldots+H_k(n)&=\binom{0}{k}+\ldots+\binom{n}{k}\\
&=\binom{k}{k}+\ldots+\binom{n}{k}\\
&=\binom{n+1}{k+1}\\
&=H_{k+1}(n+1)\\
&=L(H_k)(n+1).
\end{align*}
By linearity, for any $P\in \K[X]$, $L(P)(n+1)=P(0)+\ldots+P(n)$ for all $n\geq 1$. \end{proof}\\

\begin{prop}\label{prop25}
Let $P\in \P(n)$.
\begin{align*}
ehr_P(X)&=L\left(\sum_{\emptyset \neq O\in Top(P)} ehr_{P_{\mid [n]\setminus O}}(X)\right),\\
ehr^{str}_P(X)&=L\left(\sum_{\mbox{\scriptsize $\emptyset\neq O\in Top(P)$, discrete}} ehr_{P_{\mid [n]\setminus O}}^{str}(X)\right).
\end{align*}\end{prop}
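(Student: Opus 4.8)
The plan is to prove both identities as equalities of polynomials by checking them at all integers of the form $k+1$, $k\geq 0$, which suffices since a polynomial is determined by infinitely many of its values. Writing $n=|P|$ (I will assume $n\geq 1$, the unit $P=1$ being excluded), I will use the preceding lemma in the form $L(Q)(k+1)=Q(0)+\cdots+Q(k)$, together with Proposition~\ref{prop22}, which gives $ehr_R(j)=\sharp L_R(j)$ and $ehr^{str}_R(j)=\sharp L^{str}_R(j)$ for every poset $R$ and every $j\geq 0$. Thus everything reduces to the two purely combinatorial identities
\[
\sharp L_P(k+1)=\sum_{j=0}^{k}\ \sum_{\emptyset\neq O\in Top(P)}\sharp L_{P_{\mid [n]\setminus O}}(j),\qquad
\sharp L^{str}_P(k+1)=\sum_{j=0}^{k}\ \sum_{\substack{\emptyset\neq O\in Top(P)\\ P_{\mid O}\ \mathrm{discrete}}}\sharp L^{str}_{P_{\mid [n]\setminus O}}(j),
\]
valid for all $k\geq 0$.

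The core of the argument will be a single explicit bijection realizing the first identity, obtained by slicing a map at its top value. To $f\in L_P(k+1)$ I will associate the triple $(j,O,g)$ where $m=\max f([n])\in\{1,\dots,k+1\}$, $O=f^{-1}(m)$, $j=m-1$, and $g=f_{\mid [n]\setminus O}$. I will check that $O$ is a nonempty open set (if $x\in O$ and $x\leq_P y$ then $f(y)\geq m$, forcing $f(y)=m$), and that $g$ takes values in $[j]$ and is order-preserving, hence $g\in L_{P_{\mid [n]\setminus O}}(j)$. The inverse glues a triple back by setting $f$ equal to $g$ off $O$ and constantly $j+1$ on $O$; order-preservation of the glued map uses exactly that $O$ is open (no relation points out of $O$), and one recovers $m=j+1$ as the maximum and $O$ as its fibre, so the two constructions are mutually inverse. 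Summing over $O$ and $j$ gives the first identity; the degenerate case $O=[n]$ (where $P_{\mid [n]\setminus O}=1$ and the index $j=0$ accounts for the constant map) causes no trouble.

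For the strict version I will show the same bijection restricts to $L^{str}$. The only new point is the equivalence: $f$ satisfies the strictness condition if and only if $g$ does and, in addition, $P_{\mid O}$ is discrete. Indeed, any two elements of $O$ that are comparable in $P$ share the value $m$, so strictness of $f$ forces them to be $\sim_P$-equivalent, which is precisely discreteness of $P_{\mid O}$. Conversely, when $P_{\mid O}$ is discrete and $g\in L^{str}$, the glued map is strict: an equal-valued comparable pair lies either in $[n]\setminus O$ (handled by strictness of $g$) or in $O$ (handled by discreteness), while a comparable pair split across $O$ necessarily has distinct values $\leq j<j+1$. This yields the second identity, and the reduction of the first paragraph then concludes. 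The main obstacle I anticipate is not the bijection itself but bookkeeping the strictness–discreteness correspondence and the degenerate blocks carefully; the remaining verifications are routine.
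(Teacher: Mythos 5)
Your proposal is correct: the slicing bijection $f\mapsto (j,O,g)$ at the top value is valid, the degenerate block $(j,O)=(0,[n])$ is handled properly, and the equivalence (strictness of $f$) $\Longleftrightarrow$ (strictness of $g$ and $P_{\mid O}$ discrete) is exactly right for posets; you are also right to exclude $P=1$, for which the right-hand side vanishes while $ehr_1=1$. Your route differs from the paper's in a modest but genuine way. The paper treats the proposition as a corollary of theorem \ref{theo23}: it evaluates the compatibility $ehr_P(k+l)=\sum_{O\in Top(P)}ehr_{P_{\mid [n]\setminus O}}(k)\,ehr_{P_{\mid O}}(l)$ at $l=1$, where $ehr_{P_{\mid O}}(1)=1$ always, and $ehr^{str}_{P_{\mid O}}(1)=1$ exactly when $P_{\mid O}$ is discrete (which is where the discreteness condition enters); the $O=\emptyset$ term then yields the recursion $ehr_P(n+1)=Q(n)+ehr_P(n)$ with $Q(X)=\sum_{\emptyset\neq O\in Top(P)}ehr_{P_{\mid[n]\setminus O}}(X)$, which is unrolled down to $ehr_P(1)=Q(0)$ and matched against the lemma on $L$. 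You instead prove the already-telescoped identity $\sharp L_P(k+1)=\sum_{j=0}^{k}Q(j)$ in one stroke, by a single bijection graded by $\max f$; combinatorially this is the $l=1$ instance of the bijection used in the first step of the proof of theorem \ref{theo23}, refined so that the top fibre $O$ is forced nonempty, which absorbs the recursion. What each buys: the paper's proof is shorter because it reuses the established morphism property and needs no new bijection, while yours is self-contained (independent of theorem \ref{theo23}) and makes transparent where the constraints $O\neq\emptyset$ and discreteness of $P_{\mid O}$ originate. Both arguments ultimately rest on the same two ingredients, namely proposition \ref{prop22} and the lemma giving $L(Q)(n+1)=Q(0)+\cdots+Q(n)$, so beyond this repackaging the substance coincides.
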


\begin{proof} Let $n\geq 1$. As $L_Q(1)$ is reduced to a singleton for all finite poset $Q$:
\begin{align*}
ehr_P(n+1)&=\sum_{O\in Top(P)} ehr_{P_{\mid [n]\setminus O}}(n) ehr_{P_{\mid O}}(1)\\
&=\sum_{\emptyset \neq O\in Top(P)} ehr_{P_{\mid [n]\setminus O}}(n)+ehr_P(n). 
\end{align*}
We put:
\begin{align*}
Q(X)&=\sum_{\emptyset \neq O\in Top(P)} ehr_{P_{\mid [n]\setminus O}}(X).
\end{align*}
In particular:
\begin{align*}
Q(0)&=\sum_{\emptyset \neq O\in Top(P)} ehr_{P_{\mid [n]\setminus O}}(0)=ehr_\emptyset(0)+0=1=ehr_P(1).
\end{align*}
Then:
\begin{align*}
ehr_P(n+1)&=Q(n)+ehr_P(n)\\
&=Q(n)+Q(n-1)+ehr_P(n-1)\\
&\vdots\\
&=Q(n)+Q(n-1)+\ldots+Q(1)+ehr_P(1)\\
&=Q(n)+\ldots+Q(1)+Q(0)\\
&=L(Q)(n+1).
\end{align*}
So $ehr_P=L(Q)$. \\

For $ehr_P^{str}$, observe that $ehr_Q^{str}(1)=1$ if $Q$ is discrete, and $0$ otherwise, which implies:
\begin{align*}
ehr^{str}_P(n+1)&=\sum_{\mbox{\scriptsize $\emptyset \neq O\in Top(P)$, discrete}} ehr^{str}_{P_{\mid [n]\setminus O}}(n)+ehr^{str}_P(n). 
\end{align*}
The end of the proof is similar. \end{proof}\\

\textbf{Examples}.
\begin{align*}
ehr_{\tun}(X)&=H_1(X)=X,\\
ehr_{\tdeux}(X)&=H_1(X)+H_2(X)=\frac{X(X+1)}{2},\\
ehr_{\ttroisun}(X)=ehr_{\ptroisun}(X)&=H_1(X)+3H_2(X)+2H_3(X)=\frac{X(X+1)(2X+1)}{6},\\
ehr_{\ttroisdeux}(X)&=H_1(X)+2H_2(X)+H_3(X)=\frac{X(X+1)(X+2)}{6};\\ \\
ehr^{str}_{\tun}(X)&=H_1(X)=X,\\
ehr^{str}_{\tdeux}(X)&=H_2(X)=\frac{X(X-1)}{2},\\
ehr^{str}_{\ttroisun}(X)=ehr^{str}_{\ptroisun}(X)&=H_2(X)+2H_3(X)=\frac{X(X-1)(2X-1)}{6},\\
ehr^{str}_{\ttroisdeux}(X)&=H_3(X)=\frac{X(X-1)(X-2)}{6}.\\
\end{align*}

\subsection{Characterization of quasi-posets by packed words}

\begin{lemma}\label{lem26}
Let $P\in \QP(n)$ and let $I_1,\ldots,I_k$ be distinct minimal classes of the poset $\overline{P}$; 
let $w' \in W^{str}_{P_{\mid [n]\setminus (I_1\sqcup \ldots \sqcup I_k)}}$. The following map belongs to $W^{str}_P$:
\begin{align*}
w:&\left\{\begin{array}{rcl}
[n]&\longrightarrow&\N^*\\
x\in I_p, 1\leq p\leq k&\longrightarrow&p\\
x\notin I_1\sqcup \ldots \sqcup I_k&\longrightarrow&w'(x).
\end{array}\right.
\end{align*}\end{lemma}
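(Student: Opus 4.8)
The plan is to verify directly that $w$ lies in $W^{str}_P$, that is, to check the three defining requirements of Definition \ref{defi21}: that $w$ is order-preserving from $(P,\leq_P)$ to $(\N^*,\leq)$, that it satisfies the strictness condition, and that it is packed (its image is an initial segment of $\N^*$). Throughout I would use two elementary consequences of the hypothesis that $I_1,\dots,I_k$ are minimal classes of $\overline{P}$. First, two distinct minimal classes are incomparable, so if $x\in I_p$, $y\in I_q$ and $x\leq_P y$, then $\overline{x}\leq\overline{y}$ with $\overline{y}$ minimal forces $I_p=I_q$, hence $p=q$ and $x\sim_P y$. Second, no element of the complement $S=[n]\setminus(I_1\sqcup\cdots\sqcup I_k)$ can lie below an element of some $I_p$: a relation $y\leq_P x$ with $x\in I_p$ minimal would force $\overline{y}=I_p$, i.e.\ $y\in I_p$, contradicting $y\in S$. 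Denote by $P'=P_{\mid S}$ the relevant restriction, on which $w'$ is defined.

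First I would check order-preservation. Let $x\leq_P y$. If both lie in $I_1\sqcup\cdots\sqcup I_k$, the first remark gives $x,y$ in a common class $I_p$, so $w(x)=p=w(y)$. If both lie in $S$, then $x\leq_{P'}y$ since $\leq_{P'}$ is the restriction of $\leq_P$, and $w(x)\leq w(y)$ follows from $w'\in L_{P'}$. The configuration $x\in S$, $y\in I_p$ cannot occur by the second remark. The remaining case, $x\in I_p$ and $y\in S$, is the crux: one must show $w(x)\leq w(y)$, and this is precisely where the construction is arranged so that every value taken on the complement $S$ lies strictly above the block $\{1,\dots,k\}$ assigned to the minimal classes, giving $w(x)=p\leq k<w(y)$. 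I expect this mixed case to be the main obstacle, since it is the only place where the two pieces of the definition of $w$ interact and where minimality is used in an essential, directional way; it is also what pins down how the values of $w'$ must be placed relative to $\{1,\dots,k\}$.

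Next I would verify the strictness condition, namely that $x\leq_P y$ together with $w(x)=w(y)$ forces $x\sim_P y$. If $x,y$ are both inside the minimal classes, equality of values means $p=q$, so $x$ and $y$ share the class $I_p$ and $x\sim_P y$. If both are in $S$, then $w(x)=w(y)$ yields equal $w'$-values, and since $w'\in W^{str}_{P'}$ and $\sim_{P'}$ is the restriction of $\sim_P$, we obtain $x\sim_{P'}y$, hence $x\sim_P y$. In the mixed case $x\in I_p$, $y\in S$, the inequality established above is in fact strict, so $w(x)=w(y)$ is impossible and the implication holds vacuously; the symmetric configuration is excluded by minimality. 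Finally, packedness is immediate: the image of $w$ is the union of $\{1,\dots,k\}$ with the values taken on $S$, which occupy the range immediately above $k$; since $w'$ is itself packed onto some $[m]$, this union is the initial segment $[k+m]$, so $w\in W^{str}_P(k+m)$. Combining the three verifications gives $w\in W^{str}_P$, as claimed.
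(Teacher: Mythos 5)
Your proof is correct and follows essentially the same route as the paper's: a case analysis on the positions of $x$ and $y$ relative to $I_1\sqcup\cdots\sqcup I_k$, using minimality of the classes to exclude the configuration $x\notin I_1\sqcup\cdots\sqcup I_k$, $y\in I_p$, the gap $w(x)\leq k<w(y)$ in the mixed case, and the hypothesis $w'\in W^{str}_{P_{\mid [n]\setminus(I_1\sqcup\cdots\sqcup I_k)}}$ on the complement. You also correctly read the construction with the implicit shift $w(x)=w'(x)+k$ outside the minimal classes (which the lemma's displayed formula omits but the paper's proof and its later application, where $w'(x)+1$ appears, both use), and your explicit verification of packedness is a detail the paper leaves implicit.
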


\begin{proof} Let us assume that $i\leq_P j$. 
\begin{itemize}
\item If $i\in I_p$, as $I_p$ is a minimal class of $\overline{P}$, $j\in I_p$ or $j\notin I_1\sqcup \ldots \sqcup I_k$.
In the first case, $w(i)=w(j)$; in the second case, $w(i)\leq k<w(j)$. If moreover $w(i)=w(j)$, then necessarily $j\in I_p$, so $i\sim_P j$.
\item  If $i\notin I_1\sqcup \ldots \sqcup I_k$, as $i\leq_P j$,
$j\notin I_1\sqcup \ldots \sqcup I_k$, so $i\leq_{P_{\mid [n]\setminus (I_1\sqcup \ldots \sqcup I_k)}} j$ and $w'(i) \leq w'(j)$, so $w'(i)\leq w'(j)$.
If moreover $w(i)=w(j)$, then $w'(i)=w'(j)$, so $i\sim_{P_{\mid [n]\setminus (I_1\sqcup \ldots \sqcup I_k)}} j$ and finally $i\sim_P j$.
\end{itemize}
As a conclusion, $w\in W^{str}_P$. \end{proof}\\

\textbf{Remark.} This lemma implies that $W^{str}_P$ is non-empty for any non-empty quasi-poset $P$.

\begin{prop}
Let $P=([n],\leq_P)$ be a quasi-poset and let $i,j\in [n]$. The following assertions are equivalent:
\begin{enumerate}
\item $i\leq_P j$.
\item For all $w\in L_P$, $w(i)\leq w(j)$.
\item For all $w\in L^{str}_P$, $w(i)\leq w(j)$.
\item For all $w\in W_P$, $w(i)\leq w(j)$.
\item For all $w\in W^{str}_P$, $w(i)\leq w(j)$.
\end{enumerate}\end{prop}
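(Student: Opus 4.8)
The plan is to prove the cyclic chain of implications $(1)\Rightarrow(2)\Rightarrow(4)\Rightarrow(5)\Rightarrow(1)$ and $(2)\Rightarrow(3)\Rightarrow(5)$, which together establish the equivalence of all five assertions. Several of these implications are immediate from the definitions. The implication $(1)\Rightarrow(2)$ is just the defining property of $L_P$: if $i\leq_P j$ then every $w\in L_P$ satisfies $w(i)\leq w(j)$ by definition of $L_P(k)$. The implication $(2)\Rightarrow(3)$ holds because $L^{str}_P\subseteq L_P$, and similarly $(2)\Rightarrow(4)$ and $(3)\Rightarrow(5)$ follow from the inclusions $W_P\subseteq L_P$ and $W^{str}_P\subseteq L^{str}_P$. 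The implication $(4)\Rightarrow(5)$ is again an inclusion, $W^{str}_P\subseteq W_P$. So the entire logical skeleton reduces to the single substantive implication, namely the contrapositive of $(5)\Rightarrow(1)$.

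The heart of the argument is therefore: \emph{if $i\not\leq_P j$, then there exists $w\in W^{str}_P$ with $w(i)>w(j)$.} This is where Lemma~\ref{lem26} does all the work. The idea is to construct a strict packed word that separates $i$ and $j$ in the wrong order. First I would pass to the quotient poset $\overline{P}$: since $i\not\leq_P j$, the class $\overline{i}$ is not below $\overline{j}$ in $\overline{P}$. I then want to build a linear-extension-type strict surjection that places $\overline{j}$ strictly below $\overline{i}$. Concretely, I would peel off minimal classes of $\overline{P}$ one layer at a time, using Lemma~\ref{lem26} repeatedly (or in one stroke, with $k=1$ at each stage), but arranging the order of peeling so that $\overline{j}$ is assigned a strictly smaller value than $\overline{i}$. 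The key point permitting this is that $\overline{i}\not\leq_{\overline P}\overline{j}$, so there is no order constraint forcing $w(i)\leq w(j)$; hence I am free to remove the class of $\overline{j}$ (together with everything below and incomparable to $\overline i$ that may legitimately precede it) before I remove the class of $\overline{i}$.

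More precisely, the plan is to choose a linear extension $\ell$ of the finite poset $\overline{P}$ in which $\overline{j}$ comes before $\overline{i}$; such an extension exists exactly because $\overline i\not\leq_{\overline P}\overline j$. This linear extension gives a strictly increasing surjective map from the classes of $\overline{P}$ to $\{1,\dots,cl(P)\}$, which pulls back along $P\to\overline P$ to a map $w\in W^{str}_P$. By construction $w$ is strictly monotone on comparable non-equivalent classes, so it lies in $W^{str}_P$, and $w(j)<w(i)$ since $\overline j$ precedes $\overline i$ in $\ell$. This contradicts assertion (5), completing the contrapositive.

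The main obstacle is the bookkeeping in the construction of $w$: one must verify that the constructed map genuinely lands in $W^{str}_P$, i.e.\ that it is surjective onto an initial segment and that the strictness condition ``$i\leq_P j$ and $w(i)=w(j)$ imply $i\sim_P j$'' holds. Using a linear extension of $\overline P$ makes surjectivity and strictness automatic, since distinct classes receive distinct values and the values respect $\leq_{\overline P}$; Lemma~\ref{lem26} then guarantees membership in $W^{str}_P$ after pulling back. The only genuine input beyond routine definition-chasing is the existence of a linear extension reversing a given non-relation, which is the standard fact that any partial order embeds into a total order extending it and realizing any prescribed admissible comparison; everything else is unwinding the inclusions among $L_P$, $L^{str}_P$, $W_P$, and $W^{str}_P$.
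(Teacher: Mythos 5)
Your proof is correct, and your reduction of the whole statement to the contrapositive of $(5)\Rightarrow(1)$ matches the paper's logical skeleton; but your treatment of that key implication takes a genuinely different route. The paper argues by induction on $n$: given $i\not\leq_P j$, it picks a minimal element $k$ with $k\leq_P j$, lets $I$ be the class of $k$ in $\overline{P}$ (noting $i\notin I$), applies the induction hypothesis to $P_{\mid [n]\setminus I}$ to get $w'$ with $w'(i)>w'(j)$ when $j\notin I$ (or takes any $w'\in W^{str}_{P_{\mid [n]\setminus I}}$ when $j\in I$), and then uses Lemma~\ref{lem26} to adjoin $I$ with value $1$. You instead argue globally: pass to the finite poset $\overline{P}$, note that $i\not\leq_P j$ forces $\overline{i}\not\leq\overline{j}$, invoke the Szpilrajn-type fact that a non-relation can be reversed in some linear extension (adjoin $\overline{j}\leq\overline{i}$ and take the transitive closure, which is antisymmetric precisely because $\overline{i}\not\leq\overline{j}$, then totalize), and pull the resulting bijection $\ell$ from the classes onto $[cl(P)]$ back to $w(x)=\ell(\overline{x})$. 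Membership of $w$ in $W^{str}_P$ is then a direct two-line check: $i\leq_P j$ gives $\overline{i}\leq\overline{j}$ hence $w(i)\leq w(j)$, surjectivity is built in, and strictness holds because $\ell$ is injective on classes, so $w(i)=w(j)$ forces $\overline{i}=\overline{j}$, i.e.\ $i\sim_P j$. The one place your write-up wobbles is in attributing this verification to Lemma~\ref{lem26}, whose statement is about adjoining minimal classes to a word on the complement; as written it does not directly certify your pulled-back $w$, so you should either perform the direct check just described or recast your construction as iterated applications of the lemma, peeling one minimal class at a time in the order prescribed by $\ell$. Your route buys a shorter, induction-free argument resting on a standard order-theoretic extension fact; the paper's route buys self-containedness, reusing its Lemma~\ref{lem26}, which it needs anyway (for instance to see that $W^{str}_P$ is non-empty).
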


\begin{proof} Obviously:
$$\xymatrix{1.\ar@{=>}[r]&2.\ar@{=>}[r]\ar@{=>}[d]&3.\ar@{=>}[d]\\
&4.\ar@{=>}[r]&5.}$$
It is enough to prove that $5.\Longrightarrow 1$. We proceed by induction on $n$. If $n=1$, there is nothing to prove.
Let us assume the result at all ranks $<n$. Let $i,j\in [n]$, such that we do not have $i\leq_P j$. Let us prove that there
exists $w\in W^{str}_P$, such that $w(i)>w(j)$. There exists a minimal element $k\in [n]$, such that $k\leq_P j$;
let $I$ be the class of $k$ in $\overline{P}$. By hypothesis on $i$, $i$ and $k$ are not equivalent for $\sim_P$, so $i\notin I$. 
If $j\in I$, let us choose an element $w'\in W^{str}_{P_{\mid [n]\setminus I}}$; if $j\notin I$, then 
by the induction hypothesis, there exists $w'\in W^{str}_{P_{\mid [n]\setminus I}}$, such that $w'(i)>w'(j)$.
By lemma \ref{lem26}, the following map is an element of $W^{str}_P$:
$$w:\left\{\begin{array}{rcl}
[n]&\longrightarrow&\mathbb{N}\\
x\in I&\longrightarrow&1\\
x\notin I&\longrightarrow&w'(x)+1
\end{array}\right.$$
If $j\in I$, then $w(j)=1<w(i)$; if $j\notin I$, $w(i)=w'(i)+1>w'(j)+1=w(j)$. In both cases, $w(i)>w(j)$. \end{proof}

\subsection{Link with linear extensions}

Let $P\in \QP(n)$. Linear extensions, as defined in \cite{FM}, are elements of $W_P^{str}$: they are the elements $f\in W_P^{str}$ such that
$$\forall i,j\in [n], f(i)=f(j)\Longleftrightarrow i\sim_P j.$$
It may happens that not all elements of $W_P^{str}$ are linear extensions. For example, if $P=\tdtroisun{$1$}{$3$}{$2$}$,
$W_P^{str}(3)=\{(123),(132),(122)\}$, and $(122)$ is not a linear extension of $P$.
The set of linear extensions of $P$ will be denoted by $E_P$.

\begin{defi} \label{defi28}
Let $w$ and $w'$ be two packed words of the same length $n$. We shall say that $w\leq w'$ if:
$$\forall i,j \in [n], \:w(i)<w(j) \Longrightarrow w'(i)<w'(j).$$
\end{defi}

\begin{prop}
Let $P\in \QP(n)$. Then:
$$W_P=\bigcup_{w\in E_P} \{w'\mid w'\leq w\}.$$
This union may be not disjoint. Moreover, the maximal elements of $W_P$ for the order of definition \ref{defi28} are the elements of $E_P$.
\end{prop}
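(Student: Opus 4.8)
The plan is to establish the set equality by two inclusions, then exhibit an explicit example witnessing that the union need not be disjoint, and finally read off the description of the maximal elements. Throughout I will use that the order of definition \ref{defi28} is a genuine partial order on packed words: if $w\le w'$ and $w'\le w$, then $w(i)<w(j)\iff w'(i)<w'(j)$ for all $i,j$, so $w$ and $w'$ induce the same weak order; and two packed words inducing the same weak order must coincide.

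The inclusion $\bigcup_{w\in E_P}\{w'\mid w'\le w\}\subseteq W_P$ is the easy one. Fixing $w\in E_P$ and a packed word $w'\le w$, I only need to check that $w'$ is order preserving for $\le_P$, for then $w'\in L_P$ and, being packed, $w'\in W_P$. So let $i\le_P j$. Since $w$ is order preserving we have $w(i)\le w(j)$, hence $w(j)<w(i)$ is false; applying the implication defining $w'\le w$ to the pair $(j,i)$ shows $w'(j)<w'(i)$ is false too, that is $w'(i)\le w'(j)$. Notably this direction uses only that $w$ is order preserving, not that it is a linear extension.

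The reverse inclusion is where the real work lies: given $w\in W_P$ I must produce a linear extension $w_0\in E_P$ with $w\le w_0$. First I would record that $w$ is constant on the classes of $\sim_P$ (if $i\sim_P j$ then $i\le_P j$ and $j\le_P i$, so $w(i)\le w(j)\le w(i)$), so that $w$ factors as $\overline{w}\circ\pi$ through the quotient $\pi:[n]\to\overline{A}$ onto the poset $\overline{P}$ (with underlying set $\overline{A}$), where $\overline{w}:\overline{A}\to[k]$ is an order preserving surjection. Writing $C_t=\overline{w}^{-1}(t)$ for the level sets, I would choose a linear extension of each induced poset $\overline{P}_{\mid C_t}$ and concatenate them in the order $C_1,\dots,C_k$, obtaining a bijection $\overline{w_0}:\overline{A}\to[cl(P)]$. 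Comparable elements of $\overline{P}$ either lie in distinct levels, where they are separated by the level index, or in a common $C_t$, where they are separated by the chosen extension of $\overline{P}_{\mid C_t}$; hence $\overline{w_0}$ is a linear extension of $\overline{P}$, and $\overline{w}(\bar\imath)<\overline{w}(\bar\jmath)$ puts $\bar\imath,\bar\jmath$ in different levels so that $\overline{w_0}(\bar\imath)<\overline{w_0}(\bar\jmath)$. Pulling $\overline{w_0}$ back along $\pi$ gives $w_0\in E_P$ with $w\le w_0$. The only non-routine ingredient here is the existence of linear extensions of the finite posets $\overline{P}_{\mid C_t}$ (the classical topological-sort fact), and I expect the bookkeeping of the factorization through $\overline{P}$ to be the main place where care is needed.

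For the non-disjointness it suffices to display one overlap; I would use the paper's own poset $P=\tdtroisun{$1$}{$3$}{$2$}$, whose two linear extensions are $(123)$ and $(132)$, and note that $(122)\in W_P$ satisfies $(122)\le(123)$ and $(122)\le(132)$. Finally, for the maximal elements I would prove both containments. Every $w_0\in E_P$ is maximal: if $w_0\le w'$ with $w'\in W_P$ and $w'(i)<w'(j)$, then order preservation of $w'$ forbids $i\sim_P j$, so $w_0(i)\ne w_0(j)$ since $w_0$ separates inequivalent classes; were $w_0(i)>w_0(j)$, the implication $w_0\le w'$ applied to the pair $(j,i)$ would give $w'(i)>w'(j)$, a contradiction, so $w_0(i)<w_0(j)$, proving $w'\le w_0$ and hence $w'=w_0$ by antisymmetry. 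Conversely, any maximal $w\in W_P$ satisfies $w\le w_0$ for some $w_0\in E_P$ by the union just established, so maximality forces $w=w_0\in E_P$. This identifies the maximal elements of $W_P$ with $E_P$.
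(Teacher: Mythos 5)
Your proof is correct and takes essentially the same route as the paper: for $w\in W_P$ you refine the level sets of $w$ by linear extensions and concatenate them to obtain a dominating element of $E_P$ (the paper does this directly with linear extensions of the quasi-poset restrictions $P_{\mid I_p}$, $I_p=w^{-1}(p)$, while you equivalently pass to the quotient poset $\overline{P}$ first and use classical linear extensions of posets), and your easy inclusion and non-disjointness example coincide with the paper's. The only divergence is in proving that elements of $E_P$ are maximal: the paper counts values ($\max(w)=cl(P)$ for $w\in E_P$, $\max$ is monotone under $\le$, and $\max(w')\le cl(P)$ for $w'\in W_P$), whereas you show $w'\le w_0$ directly and invoke antisymmetry of $\le$ --- an equally valid minor variant, and your justification of antisymmetry (a packed word is determined by the strict order it induces on positions) is sound.
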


\begin{proof}
$\subseteq$. Let $w\in W_P$. For all $1 \leq p \leq \max(w)$, we put $I_p=w^{-1}(p)$. Let $f_p$ be a linear extension of $P_{\mid I_p}$.
Let us consider:
$$f:\left\{\begin{array}{rcl}
[n]&\longrightarrow&\mathbb{N}\\
i&\longrightarrow&\max(f_1)+\ldots+\max(f_{p-1})+f_p(i) \mbox{ if }i\in I_p.
\end{array}\right.$$
By construction, if $w(i)<w(j)$, then $f(i)<f(j)$: $w\leq f$. Let us prove that $f\in E_P$. 

If $i\leq_P j$, then as $w\in W_P$, $w(i)\leq w(j)$. If $w(i)=w(j)=p$, then $i\leq_{P_{\mid I_p}} j$, so $f_p(i)\leq f_p(j)$,
and $f(i)\leq f(j)$. If $w(i)<w(j)$, then $f(i)<f(j)$. 

If $f(i)=f(j)$, then $w(i)=w(j)=p$, and $f_p(i)=f_p(j)$. As $f_p \in E_{P_{\mid P_p}}$, $i\sim_{P_{\mid P_p}} j$, so $i\sim_P j$.\\

$\supsetneq$. Let $w\in E_P$ and $w'\leq w$. If $i\leq_P j$, then $w(i)\leq w(j)$ as $w$ is a linear extension of $P$. As $w'\leq w$,
$w'(i)\leq w'(j)$, so $w'\in W_P$.\\

Let $w$ be a maximal element of $W_P$. There exists a linear extension $w'$ of $P$, such that $w\leq w'$. As $w$ is maximal, $w=w'$
is a linear extension of $P$. Conversely, if $w$ is a linear extension of $P$ and $w\leq w'$ in $W_P$, then as $w$ is a linear extension of $P$,
$\max(w)=cl(P)$. Moreover, as $w\leq w'$, $\max(w)\leq \max(w')$. As $w'\in W_P$, $\max(w')\leq cl(P)$, which implies
that $\max(w)=\max(w')=cl(P)$, and finally $w=w'$: $w$ is a maximal element of $W_P$. \end{proof} \\

\textbf{Example.} For $P=\tdtroisun{$1$}{$3$}{$2$}$:
\begin{align*}
E_P&=\{(123),(132)\};\\
W_P&=\{(123),(122),(112),(111)\}\cup\{(132),(122),(121),(111)\}\\
&=\{(123),(132),(122),(112),(121),(111)\}.
\end{align*}
Note that the two components of $W_P$ are not disjoint.\\

\textbf{Remark.} A similar result is proved in \cite{FM} for $T$-partitions of a quasi-poset, generalizing Stanley's result \cite{Stanley} for $P$-partitions
of posets; nevertheless, this is different here, as the union is not a disjoint one.

\section{Characters associated to $ehr$ and $ehr^{str}$}

Recall that $(M_\qp,*)$ the monoid of characters of $(\h_\qp,m,\delta)$.\\

By theorems \ref{theo7} and \ref{theo23}:

\begin{prop} 
\begin{enumerate}
\item There exists a unique homogeneous Hopf algebra morphism $\phi_0$ from $(\h_\qp,m,\Delta)$ to $(\K[X],m,\Delta)$ such that:
$$\forall n\geq 1, \:\phi_0(\tdun{$n$})=X.$$
There exists a unique character $\lambda \in M_\qp$ such that for all $P\in \qp$,
$$\phi_0(P)=\lambda_PX^{cl(P)}.$$
\item  There exist unique characters $\alpha$, $\alpha^{str}\in M_\qp$,
such that:
\begin{align*}
ehr&=\phi_0\leftarrow \alpha,&ehr^{str}&=\phi_0\leftarrow \alpha^{str}.
\end{align*}
\end{enumerate}\end{prop}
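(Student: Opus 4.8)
The plan is to read this proposition as a direct application of the abstract machinery of Section \ref{sect1-3}, specialised to the pair $A=(\h_\qp,m,\Delta)$ and $B=(\h_\qp,m,\delta)$. By the corollary immediately preceding, these two bialgebras are in cointeraction via $\delta$, and with the grading $(\h_\qp)_n=\mathrm{Vect}(P\in\qp\mid cl(P)=n)$ they satisfy hypotheses 1--5, with $V$ the span of the connected quasi-posets and with $(\tdun{$n$})_{n\geq 1}$ the group-like basis of $(\h_\qp)_1$. So I would first invoke this corollary to place ourselves exactly in the setting of Theorems \ref{theo7} and \ref{theo8}, and then translate each assertion of the proposition into the corresponding output of those theorems.

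For the first point I would apply Theorem \ref{theo7}. Its normalization requires $\phi_0(x)=\varepsilon'(x)X$ for every $x\in(\h_\qp)_1$. The one verification needed here is that each $\tdun{$n$}$ is discrete: its quotient poset $\overline{\tdun{$n$}}$ is a single point, so $\sim_P=\leq_P$ and hence $\varepsilon'(\tdun{$n$})=1$. Since $(\h_\qp)_1$ is spanned by the $\tdun{$n$}$, the requirement $\phi_0(\tdun{$n$})=X$ for all $n$ is precisely Theorem \ref{theo7}'s normalization, and the existence and uniqueness of the homogeneous Hopf algebra morphism $\phi_0$ follow at once. The character $\lambda$ is then the $\lambda_0$ furnished by that theorem: because the degree of $P$ in our grading is $cl(P)$, homogeneity gives $\phi_0(P)=\lambda_P X^{cl(P)}$, and the multiplicativity of $\phi_0$ on homogeneous components forces $\lambda\in M_\qp$ (with $\lambda_0$ moreover invertible).

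For the second point I would first invoke Theorem \ref{theo23}, which guarantees that $ehr$ and $ehr^{str}$ are Hopf algebra morphisms from $(\h_\qp,m,\Delta)$ to $(\K[X],m,\Delta)$, hence elements of $E_{\h_\qp\rightarrow\K[X]}$. Theorem \ref{theo8} then asserts that $\theta:\lambda\mapsto\phi_0\leftarrow\lambda$ is a bijection from $M_\qp$ onto $E_{\h_\qp\rightarrow\K[X]}$; taking preimages yields unique characters $\alpha,\alpha^{str}\in M_\qp$ with $ehr=\phi_0\leftarrow\alpha$ and $ehr^{str}=\phi_0\leftarrow\alpha^{str}$. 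The ``moreover'' clause of Theorem \ref{theo8} even identifies them explicitly on the generators: for connected $P$ one has $\alpha_P=\frac{d\,ehr_P}{dX}(0)$ and likewise $\alpha^{str}_P=\frac{d\,ehr^{str}_P}{dX}(0)$.

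Thus the proposition is essentially a formal consequence of results already proved. The only genuine content, and hence the main (mild) obstacle, is the matching of normalizations in the first point: one must check that $\tdun{$n$}$ is discrete so that $\varepsilon'(\tdun{$n$})=1$ turns the hypothesis $\phi_0(\tdun{$n$})=X$ into Theorem \ref{theo7}'s condition, and that the operative grading is by $cl$ rather than by cardinality, so that the exponent appearing in $\phi_0(P)=\lambda_P X^{cl(P)}$ is indeed $cl(P)$. Everything else is transported along the hypotheses 1--5 verified in the preceding corollary.
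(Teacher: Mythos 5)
Your proposal is correct and follows essentially the same route as the paper, which derives the proposition directly from theorems \ref{theo7} and \ref{theo23} after having verified (in the corollary following the cointeraction theorem) that hypotheses 1--5 of section \ref{sect1-3} hold for $(\h_\qp,m,\Delta)$ and $(\h_\qp,m,\delta)$ with the grading by $cl$. If anything, your write-up is slightly more careful than the paper's one-line citation: you make explicit the check that each $\tdun{$n$}$ is discrete (so the normalization $\phi_0(\tdun{$n$})=X$ matches $\phi_0(x)=\varepsilon'(x)X$ on $(\h_\qp)_1$) and you explicitly invoke the bijection of theorem \ref{theo8} for the existence and uniqueness of $\alpha$ and $\alpha^{str}$, which the paper uses implicitly here and only cites just before theorem \ref{theo35}.
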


\textbf{Remarks.}
\begin{enumerate}
\item Let $P\in \qp$. Then $ehr_P=ehr_{\overline{P}}$ and  $ehr^{str}_P=ehr^{str}_{\overline{P}}$,
so $\alpha_P=\alpha_{\overline{P}}$ and  $\alpha^{str}_P=\alpha^{str}_{\overline{P}}$.
\item Still by theorem \ref{theo7}, there exists a unique homogeneous Hopf algebra morphism $\phi'_0$ from $\h_\p$ to $\K[X]$
such that $\phi_0'(\tun)=X$. By unicity, $\phi_0=\phi'_0\circ \Xi$, so for any $P\in \qp$, $\lambda_P=\lambda_{\overline{P}}$.
\end{enumerate}

\subsection{The character $\lambda$}

\begin{lemma}
For all $P\in \P(n)$, $n\geq 0$:
$$\lambda_{\isoclass{P}}=\begin{cases}
1\mbox{ if }P=1,\\
\displaystyle \frac{1}{n}\sum_{M\in max(P)} \lambda_{\isoclass{P_{\mid [n]\setminus \{M\}}}}
=\frac{1}{n}\sum_{m\in min(P)} \lambda_{\isoclass{P_{\mid [n]\setminus \{m\}}}}\mbox{ otherwise}.
\end{cases}$$
\end{lemma}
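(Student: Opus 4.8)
The plan is to exploit the fact that $\phi_0$ is in particular a coalgebra morphism from $(\h_\qp,m,\Delta)$ to $(\K[X],m,\Delta)$, i.e. $\Delta\circ\phi_0=(\phi_0\otimes\phi_0)\circ\Delta$, and to read off the recursion by comparing the coefficients of $X^{n-1}\otimes X$ and of $X\otimes X^{n-1}$ on both sides. Throughout I would use that for a poset the equivalence $\sim_P$ is trivial, so $cl(P_{\mid B})=|B|$ for every $B\subseteq[n]$, and hence $\phi_0(\isoclass{P_{\mid B}})=\lambda_{\isoclass{P_{\mid B}}}X^{|B|}$ by theorem \ref{theo7}. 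The base case $P=1$ is immediate: $\phi_0(1)=1=\lambda_1X^0$, so $\lambda_1=1$; and from $\phi_0(\tun)=X$ one gets $\lambda_{\tun}=1$, a value that will be needed below.

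First I would compute both sides for $\isoclass{P}$ with $cl(P)=n$. Since $\phi_0(\isoclass{P})=\lambda_{\isoclass{P}}X^n$ and $\Delta(X)=X\otimes1+1\otimes X$ in $\K[X]$,
$$\Delta\circ\phi_0(\isoclass{P})=\lambda_{\isoclass{P}}\,\Delta(X^n)=\lambda_{\isoclass{P}}\sum_{k=0}^n\binom{n}{k}X^k\otimes X^{n-k}.$$
For the other side, the coproduct on $\h_\qp$ gives
$$(\phi_0\otimes\phi_0)\circ\Delta(\isoclass{P})=\sum_{O\in top(P)}\lambda_{\isoclass{P_{\mid[n]\setminus O}}}\,\lambda_{\isoclass{P_{\mid O}}}\,X^{n-|O|}\otimes X^{|O|},$$
recalling that the open set $O$ lands in the right-hand tensor factor. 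Equating these, I would isolate the homogeneous component $X^{n-1}\otimes X$ (coefficient $n\lambda_{\isoclass{P}}$ on the left, since $\binom{n}{n-1}=n$) and, dually, the component $X\otimes X^{n-1}$ (again coefficient $n\lambda_{\isoclass{P}}$).

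The key step is then to identify which open sets $O$ contribute to each component. The term $X^{n-1}\otimes X$ comes exactly from the open sets of cardinality one; an open singleton $\{M\}$ is precisely a maximal element of $P$, and there $\lambda_{\isoclass{P_{\mid\{M\}}}}=\lambda_{\tun}=1$, so this component of the right-hand side equals $\sum_{M\in max(P)}\lambda_{\isoclass{P_{\mid[n]\setminus\{M\}}}}$. Similarly, the term $X\otimes X^{n-1}$ comes from the open sets whose complement has cardinality one; $[n]\setminus O=\{m\}$ is then a closed singleton, which is exactly a minimal element of $P$, and again $\lambda_{\isoclass{P_{\mid\{m\}}}}=1$, giving $\sum_{m\in min(P)}\lambda_{\isoclass{P_{\mid[n]\setminus\{m\}}}}$. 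Matching each with the left-hand coefficient $n\lambda_{\isoclass{P}}$ and dividing by $n$ yields the two announced formulas. I expect the only delicate point to be these two combinatorial identifications — open singletons with maximal elements, and closed singletons (complements of open sets) with minimal elements — after which the statement follows by a direct coefficient comparison.
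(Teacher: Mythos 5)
Your proposal is correct and matches the paper's proof essentially verbatim: the paper also equates $\Delta\circ\phi_0(\isoclass{P})$ with $(\phi_0\otimes\phi_0)\circ\Delta(\isoclass{P})$ and isolates the bidegree-$(n-1,1)$ component (via the projection $\pi$ onto $Vect(X)$ applied to the right factor, which is exactly your coefficient comparison giving $n\lambda_{\isoclass{P}}$ against the open singletons, i.e.\ maximal elements), then gets the minimal-element formula by applying $\pi\otimes Id$ instead. Your identifications of open singletons with maximal elements and closed singletons with minimal elements, and the value $\lambda_{\tun}=1$, are precisely the ingredients the paper uses.
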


\begin{proof} Let $P\in \P(n)$, with $n \geq 0$. Recall that $\pi:\K[X]\longrightarrow Vect(X)$ is the canonical projection.
\begin{align*}
(Id \otimes \pi)\circ \Delta\circ \phi_0(\isoclass{P})&=\lambda_{\isoclass{P}} (Id \otimes \pi)\circ \Delta(X^n)\\
&=\lambda_{\isoclass{P}} nX^{n-1};\\
=(Id \otimes \pi)\circ (\phi_0\otimes \phi_0)\circ \Delta(\isoclass{P})
&=\sum_{O \in Top(P)} \lambda_{\isoclass{P_{\mid O}}} \lambda_{\isoclass{P_{\mid [n]\setminus O}}}X^{|[n]\setminus O|}\pi(X^{|O|})\\
&=\sum_{O \in Top(P),\:|O|=1} \lambda_{\isoclass{P_{\mid O}}} \lambda_{\isoclass{P_{\mid [n]\setminus O}}}X^{n-1}\\
&=\sum_{M\in max(P)}\lambda_{\isoclass{P_{\mid [n]\setminus\{M\}}}}X^{n-1}.
\end{align*}
This implies the first equality. The second is proved by considering $(\pi\otimes Id)\circ \Delta\circ \phi_0(\isoclass{P})$. \end{proof} \\

\textbf{Remarks.} \begin{enumerate}
\item This lemma allows to inductively compute $\lambda_P$. This gives:
$$\begin{array}{c|c|c|c|c|c|c|c|c|c|c|c|c|c|c|c}
P&\tun&\tdeux&\ttroisun&\ptroisun&\ttroisdeux&
\tquatreun&\pquatreun&\tquatredeux&\pquatredeux&\tquatrequatre&\pquatrequatre&\tquatrecinq
&\pquatresix&\pquatresept&\pquatrehuit\\ \hline&&&&&&&&&&&&&&&\\[-2mm]
\lambda_P&1&\displaystyle \frac{1}{2}&\displaystyle \frac{1}{3}&\displaystyle \frac{1}{3}&\displaystyle \frac{1}{6}&
\displaystyle \frac{1}{4}&\displaystyle \frac{1}{4}&\displaystyle \frac{1}{8}&\displaystyle \frac{1}{8}&\displaystyle \frac{1}{12}
&\displaystyle \frac{1}{12}&\displaystyle \frac{1}{24}&\displaystyle \frac{5}{24}&\displaystyle \frac{1}{6}&\displaystyle \frac{1}{12}
\end{array}$$
\item If $P=(P,\leq)$ is a finite poset, we denote by $P^{op}$ the opposite poset $(P,\geq)$. It is not difficult to deduce from this lemma that
$\lambda_P=\lambda_{P^{op}}$.
\end{enumerate}

\begin{prop} \label{prop32}
Let $P\in \P(n)$. The number of elements of $W_P(n)$ of $P$ is denoted by $\mu_P$: 
in other words, $\mu_P$ is the number of bijections $f$ from $[n]$ to $[n]$ such that for all $x,y\in [n]$, 
$$x\leq_P y\Longrightarrow f(x)\leq f(y).$$
These bijections are called heap-orderings of $P$. Then, for any finite poset $P$, $\displaystyle \lambda_P=\frac{\mu_P}{n!}$.
\end{prop}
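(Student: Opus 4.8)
The plan is to show that $P\mapsto \mu_P/n!$ satisfies the same recursion as $\lambda_{\isoclass{P}}$, with the same initial value, and then to conclude by induction on $n=|P|$. First I would record two reductions. Since $P$ is a poset, $\sim_P$ is the equality relation, so $cl(P)=n$ and the defining property of $\lambda$ reads $\phi_0(\isoclass{P})=\lambda_{\isoclass{P}}X^n$. Moreover $\mu_P=\sharp W_P(n)$: a surjection $f:[n]\to[n]$ is automatically a bijection, and an order-preserving bijection is exactly a heap-ordering. Thus the target identity is $\lambda_{\isoclass{P}}=\sharp W_P(n)/n!$.

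The key step is a recursion for $\mu_P$ obtained by deleting a top element. Let $f$ be a heap-ordering of $P$ and put $M=f^{-1}(n)$. If $M$ were not maximal there would exist $y\neq M$ with $M\leq_P y$, whence $f(M)<f(y)\leq n$, contradicting $f(M)=n$; hence $M\in max(P)$. Deleting $M$ and standardizing the values of $f$ on $[n]\setminus\{M\}$ produces a heap-ordering of $P_{\mid[n]\setminus\{M\}}$, and conversely every heap-ordering of $P_{\mid[n]\setminus\{M\}}$, for a maximal $M$, extends uniquely to a heap-ordering of $P$ sending $M$ to the value $n$. This yields a bijection
$$W_P(n)\;\simeq\;\bigsqcup_{M\in max(P)} W_{P_{\mid[n]\setminus\{M\}}}(n-1),$$
so that $\mu_P=\sum_{M\in max(P)}\mu_{P_{\mid[n]\setminus\{M\}}}$.

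Finally I would combine this with the preceding lemma. For $n=0$, i.e. $P=1$, both sides equal $1$, since the empty poset has a unique heap-ordering and $\lambda_1=1$. Assuming $\lambda_{\isoclass{Q}}=\mu_Q/(n-1)!$ for every poset $Q$ on $n-1$ vertices, the recursion for $\mu_P$ gives
$$\frac{\mu_P}{n!}=\frac{1}{n}\sum_{M\in max(P)}\frac{\mu_{P_{\mid[n]\setminus\{M\}}}}{(n-1)!}=\frac{1}{n}\sum_{M\in max(P)}\lambda_{\isoclass{P_{\mid[n]\setminus\{M\}}}}=\lambda_{\isoclass{P}},$$
the last equality being exactly the recursion of the preceding lemma. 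The only genuinely non-formal point is the deletion bijection establishing the recursion for $\mu_P$; the rest is the induction together with the already-proved recursion for $\lambda$. One could equally run the argument with minimal elements, using the second form of the lemma's recursion.
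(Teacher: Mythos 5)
Your proposal is correct and follows essentially the same route as the paper: the paper also deletes the element $f^{-1}(n)$ (necessarily in $max(P)$) to get the bijection $W_P(n)\simeq\bigsqcup_{M\in max(P)}W_{P_{\mid [n]\setminus\{M\}}}(n-1)$, hence $\mu_P=\sum_{M\in max(P)}\mu_{P\setminus\{M\}}$, and concludes by induction via the recursion $\lambda_{\isoclass{P}}=\frac{1}{n}\sum_{M\in max(P)}\lambda_{\isoclass{P\setminus\{M\}}}$ of the preceding lemma. Your write-up even supplies slightly more detail than the paper (the maximality of $f^{-1}(n)$ and the inverse extension map), so there is nothing to fix.
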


\begin{proof}  Let us fix a non-empty finite poset $P\in \P(n)$. The set of heap-orderings of $P$ is $HO(P)=W_P(n)$.
We consider the map:
$$\left\{\begin{array}{rcl}
HO(P)&\longrightarrow&\displaystyle \bigsqcup_{M\in max(P)} HO(P\setminus \{M\})\\
f&\longrightarrow&f_{\mid [n-1]} \in HO(P\setminus \{f^{-1}(n)\}).
\end{array}\right.$$
It is not difficult to prove that this is a bijection. So:
\begin{align*}
\mu_P&=\sum_{M\in max(P)} \mu_{P\setminus\{M\}};&\frac{\mu_P}{n!}&=\frac{1}{n}\sum_{M\in max(P)} \frac{\mu_{P\setminus\{M\}}}{|P\setminus\{M\}|!}.
\end{align*}
An easy induction on $|P|$ then proves that $\displaystyle \lambda_P=\frac{\mu_P}{n!}$ for all $P$.\end{proof}\\

This formula can be simplified for rooted forests.

\begin{defi}\label{defi33}
Let $P$ be a non-empty finite poset.
\begin{enumerate}
\item  We put:
$$P!=\prod_{i\in V(P)} \sharp\{j\in V(P)\mid i\leq_Pj\}.$$
By convention, $1!=1$.
\item We shall say that $P$ is a rooted forest if $P$ does not contain any subposet isomorphic to $\ptroisun$.
\end{enumerate}\end{defi}

For example, here are isoclasses of rooted forests of cardinality $\leq 4$:
\begin{align*}
&1;&&\tun;&&\tdeux,\tun\tun;&&\ttroisun,\ttroisdeux,\tun\tdeux,\tun\tun\tun;&&
\tquatreun,\tquatredeux,\tquatrequatre,\tquatrecinq,\ttroisun\tun,\ttroisdeux\tun,\tdeux\tdeux,\tdeux\tun\tun,\tun\tun\tun\tun.
\end{align*}

\textbf{Examples.}
$$\begin{array}{c|c|c|c|c|c|c|c|c|c|c|c|c|c|c|c}
P&\tun&\tdeux&\ttroisun&\ptroisun&\ttroisdeux&
\tquatreun&\pquatreun&\tquatredeux&\pquatredeux&\tquatrequatre&\pquatrequatre&\tquatrecinq
&\pquatresix&\pquatresept&\pquatrehuit\\ \hline
P!&1&2&3&4&6&4&8&8&12&12&18&24&6&9&16
\end{array}$$

\begin{prop} \label{prop34}
For all finite poset $P$, $\displaystyle \lambda_P\geq \frac{1}{P!}$, with equality if, and only if, $P$ is a rooted forest.
\end{prop}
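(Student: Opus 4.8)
The plan is to argue by strong induction on $n=|P|$, establishing simultaneously the inequality $\lambda_P\geq 1/P!$ and the characterization of the equality case, using the recursive formula for $\lambda$ in terms of minimal elements proved in the lemma above. The base cases $n\leq 1$ are immediate: then $P$ is a (rooted) forest and $\lambda_P=1=1/P!$.

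For the inductive step, I first record how the factorial behaves under deletion of a minimal element. Writing $h_m=\sharp\{j\in V(P)\mid m\leq_P j\}$ for $m\in min(P)$, note that minimality of $m$ forces $i\not\leq_P m$ for every $i\neq m$, so $\sharp\{j\mid i\leq_P j\}$ is unchanged when $m$ is removed; hence $(P_{\mid[n]\setminus\{m\}})!=P!/h_m$. Combining the recursive formula with the induction hypothesis gives
\begin{align*}
\lambda_P=\frac{1}{n}\sum_{m\in min(P)}\lambda_{P_{\mid[n]\setminus\{m\}}}\geq\frac{1}{n}\sum_{m\in min(P)}\frac{h_m}{P!}=\frac{1}{nP!}\sum_{m\in min(P)}h_m.
\end{align*}
Since every vertex $j$ lies above at least one minimal element, $\sum_{m\in min(P)}h_m=\sum_{j}\sharp\{m\in min(P)\mid m\leq_P j\}\geq n$, which yields $\lambda_P\geq 1/P!$. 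I record for later that this last sum equals $n$ exactly when each vertex has a \emph{unique} minimal element below it.

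For the equality case, one direction is handled by the same computation: if $P$ is a rooted forest, its minimal elements are the roots, each $P_{\mid[n]\setminus\{m\}}$ is again a forest, and each vertex lies above a unique root; both inequalities above then become equalities by the induction hypothesis, so $\lambda_P=1/P!$. Conversely, assume $\lambda_P=1/P!$. Then both inequalities are tight, so by the induction hypothesis every $P_{\mid[n]\setminus\{m\}}$ is a forest, and every vertex has a unique minimal element below it.

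The main obstacle is to conclude from these two facts that $P$ itself is a forest, since uniqueness of the minimal element below each vertex alone does not suffice (the diamond is a counterexample); the deletion hypothesis must also be used. Here is the argument I would give. Suppose $P$ is not a forest, so it contains a copy of $\ptroisun$: incomparable $a,b$ with $a,b\leq_P c$ (and $a,b\neq c$). Choose minimal elements $m_a\leq_P a$ and $m_b\leq_P b$; both lie below $c$, so by uniqueness $m_a=m_b=:m$. If $a=m$, then $m\leq_P b$ would force $a\leq_P b$, contradicting incomparability; hence $a\neq m$, and symmetrically $b\neq m$, while $m\leq_P a\leq_P c$ with $a\neq c$ gives $c\neq m$. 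Thus $a,b,c$ all survive in $P_{\mid[n]\setminus\{m\}}$ with the same relations, so that poset still contains $\ptroisun$, contradicting that it is a forest. Therefore $P$ is a forest, which closes the induction.
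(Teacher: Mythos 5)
Your proof is correct and follows essentially the same route as the paper: the same recursion over minimal elements, the same identity $(P_{\mid [n]\setminus\{m\}})!=P!/h_m$, and the same counting bound $\sum_{m\in min(P)}h_m\geq |P|$ with the same analysis of when both inequalities are tight. The only difference is that you explicitly justify the final step (that the deletion hypothesis together with uniqueness of the minimal element below each vertex rules out a subposet $\ptroisun$), a detail the paper's proof states without argument, and your justification is sound.
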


\begin{proof} We proceed by induction on $n=|P|$. It is obvious if $n=0$. Let us assume the result at all ranks $<n$.
\begin{align*}
\lambda_P&=\frac{1}{|P|}\sum_{m\in min(P)} \lambda_{P\setminus\{m\}}\\
&\geq \frac{1}{|P|}\sum_{m\in min(P)} \prod_{i\in V(P), i\neq m} \frac{1}{\sharp\{j\in V(P)\mid j\neq m, i\leq_Pj\}}\\
&=\frac{1}{|P|}\sum_{m\in min(P)} \prod_{i\in V(P), i\neq m} \frac{1}{\sharp\{j\in V(P)\mid i\leq_Pj\}}\\
&=\frac{1}{P!}\frac{1}{|P|} \sum_{m\in min(P)} \sharp\{j\in V(P)\mid m\leq_P j\}.
\end{align*}
For any $j\in A$, there exists $m\in min(P)$ such that $m\leq_P j$, so:
$$ \sum_{m\in min(P)} \sharp\{j\in V(P)\mid m\leq_P j\}\geq |P|.$$
Consequently, $\displaystyle \lambda_P\geq \frac{1}{P!}$.\\

Let us assume that this is an equality. Then:
$$ \sum_{m\in min(P)} \sharp\{j\in V(P)\mid m\leq_P j\}=|P|.$$
Consequently, for all $j\in min(P)$, there exists a unique $m\in min(P)$ such that $m\leq_P j$.
Moreover, for all $m\in min(P)$, $\lambda_{P\setminus\{m\}}=\frac{1}{P\setminus\{m\}!}$. By the induction hypothesis,
$P\setminus \{m\}$ is a rooted forest; this implies that $P$ is also a rooted forest.\\

Let us assume that $P$ is a rooted forest. For any $j\in V(P)$, there exists a unique $m \in min(P)$ such that $m\leq_P j$, so:
$$ \sum_{m\in min(P)} \sharp\{j\in V(P)\mid m\leq_P j\}=|P|.$$
Moreover, for all $m\in min(P)$, $P\setminus \{m\}$ is also a rooted forest. By the induction hypothesis,
$\lambda_{P\setminus\{m\}}=\frac{1}{P\setminus\{m\}!}$. Hence, $\displaystyle \lambda_P=\frac{1}{P!}$. \end{proof}

\subsection{The character $\alpha^{str}$}

Let us now apply theorem \ref{theo8} to $ehr$ and $ehr^{str}$:

\begin{theo} \label{theo35}
For all finite connected quasi-poset $P$, we have:
\begin{align*}
\alpha_P&=\frac{d\:ehr_P}{dX}(0),&\alpha^{str}_P&=\frac{d\:ehr^{str}_P}{dX}(0).
\end{align*}
For any quasi-poset $P$:
\begin{align*}
ehr_P(X)&=\sum_{\sim\triangleleft P}\frac{\mu_{P/\sim}}{cl(\sim)!} \alpha_{P|\sim}X^{cl(\sim)},&
ehr^{str}_P(X)&=\sum_{\sim\triangleleft P}\frac{\mu_{P/\sim}}{cl(\sim)!} \alpha^{str}_{P|\sim}X^{cl(\sim)},
\end{align*}
where $cl(\sim)$ is the number of equivalence classes of $\sim$. \end{theo}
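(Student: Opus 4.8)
The plan is to deduce both assertions directly from the abstract results of Section~\ref{sect1-3}, applied to $A=(\h_\qp,m,\Delta)$, $B=(\h_\qp,m,\delta)$ and the coaction $\rho=\delta$, a setting in which hypotheses 1--5 have already been verified.

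The first assertion is an instance of Theorem~\ref{theo8}. By definition $\alpha$ and $\alpha^{str}$ are the characters of $(\h_\qp,m,\delta)$ with $ehr=\phi_0\leftarrow\alpha$ and $ehr^{str}=\phi_0\leftarrow\alpha^{str}$, that is, $\alpha=\theta^{-1}(ehr)$ and $\alpha^{str}=\theta^{-1}(ehr^{str})$ for the bijection $\theta$ of that theorem. The final clause of Theorem~\ref{theo8} says that whenever $\phi=\phi_0\leftarrow\lambda$ one has $\lambda(x)=\frac{d\phi(x)}{dX}(0)$ for all $x$ in the subspace $V$ spanned by the connected quasi-posets. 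Taking $(\phi,\lambda)=(ehr,\alpha)$ and $(\phi,\lambda)=(ehr^{str},\alpha^{str})$ and evaluating at a connected quasi-poset $P$ gives exactly the two formulas for $\alpha_P$ and $\alpha^{str}_P$; nothing further is required here.

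For the second assertion I would simply unwind the definition of $\leftarrow$. Since $\rho=\delta$,
$$ehr_P(X)=(\phi_0\leftarrow\alpha)(P)=(\phi_0\otimes\alpha)\circ\delta(P)=\sum_{\sim\triangleleft P}\phi_0(P/\sim)\,\alpha_{P|\sim},$$
using $\delta(P)=\sum_{\sim\triangleleft P}(P/\sim)\otimes(P|\sim)$ and applying the character $\alpha$ to the right-hand factors. Now $\phi_0(P/\sim)=\lambda_{P/\sim}X^{cl(P/\sim)}$, so it remains to identify this monomial with $\frac{\mu_{P/\sim}}{cl(\sim)!}X^{cl(\sim)}$. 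This is the only point requiring care, and it rests on three facts already available: $cl(P/\sim)=cl(\sim)$ by the remarks on $P$-compatible equivalences ($\sim\triangleleft P$); $\lambda_{P/\sim}=\lambda_{\overline{P/\sim}}$ because $\lambda$ depends only on the underlying poset; and $\lambda_{\overline{P/\sim}}=\frac{\mu_{P/\sim}}{cl(\sim)!}$ by Proposition~\ref{prop32}, where $\mu_{P/\sim}$ denotes the number of heap-orderings of the poset $\overline{P/\sim}$ on its $cl(\sim)$ classes. Substituting these identities yields the stated formula for $ehr_P$, and the identical computation with $\alpha^{str}$ in place of $\alpha$ yields the one for $ehr^{str}_P$. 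The main obstacle is thus purely bookkeeping: tracking the invariants $cl$ and $\mu$ through the contraction $P\mapsto P/\sim$ and the passage to $\overline{P/\sim}$, together with checking in the first part that $\theta$ is applied to the correct morphisms; everything else is a direct substitution.
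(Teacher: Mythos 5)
Your proposal is correct and follows exactly the paper's route: the paper states Theorem \ref{theo35} as a direct application of Theorem \ref{theo8} (whose derivative formula on $V$, the span of connected quasi-posets, gives the first assertion), and the second assertion is obtained precisely by unwinding $ehr=\phi_0\leftarrow\alpha$ through $\delta$ and substituting $\phi_0(P/\sim)=\lambda_{P/\sim}X^{cl(P/\sim)}$ with $cl(P/\sim)=cl(\sim)$ and $\lambda_{P/\sim}=\lambda_{\overline{P/\sim}}=\frac{\mu_{P/\sim}}{cl(\sim)!}$ from Proposition \ref{prop32}. You have in fact written out the bookkeeping the paper leaves implicit, and all of it checks out.
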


Let us give a few values of $\alpha$:
$$\begin{array}{c|c|c|c|c|c|c|c|c|c|c|c|c|c|c|c}
P&\tun&\tdeux&\ttroisun&\ptroisun&\ttroisdeux&
\tquatreun&\pquatreun&\tquatredeux&\pquatredeux&\tquatrequatre&\pquatrequatre&\tquatrecinq
&\pquatresix&\pquatresept&\pquatrehuit\\ \hline&&&&&&&&&&&&&&&\\[-2mm]
\alpha_P&1&\displaystyle \frac{1}{2}&\displaystyle \frac{1}{6}&\displaystyle \frac{1}{6}&\displaystyle \frac{1}{3}&
0&0&\displaystyle \frac{1}{12}&\displaystyle \frac{1}{12}&\displaystyle \frac{1}{6}&\displaystyle \frac{1}{6}
&\displaystyle \frac{1}{4}&\displaystyle \frac{1}{12}&\displaystyle \frac{1}{6}&\displaystyle \frac{1}{6}
\end{array}$$

\begin{lemma}
Let $P\in \QP$, not discrete. Then:
$$(-1)^{cl(P)}ehr_P(-1)=ehr^{str}_P(1)=\begin{cases}
1\mbox{ if $P$ is discrete},\\
0\mbox{ otherwise}.
\end{cases}$$ 
\end{lemma}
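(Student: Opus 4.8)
The plan is to prove the two equalities separately, the right-hand one being essentially immediate. For $ehr^{str}_P(1)$ I would argue directly from Definition \ref{defi21}: there is a unique map $[n]\to[1]$, namely the constant map, and it belongs to $L^{str}_P(1)$ exactly when $i\leq_P j$ implies $i\sim_P j$ for all $i,j$, i.e.\ exactly when $P$ is discrete. Hence $ehr^{str}_P(1)=\sharp L^{str}_P(1)$ equals $1$ if $P$ is discrete and $0$ otherwise. For the left-hand equality I would first reduce to posets: since $ehr_P=ehr_{\overline P}$, $cl(P)=|\overline P|$, and $P$ is discrete if and only if $\overline P$ is an antichain, it suffices to show that every finite poset $Q$ with $m$ vertices satisfies $(-1)^m ehr_Q(-1)=1$ when $Q$ is an antichain and $0$ otherwise.

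The main tool is the operator $L$ together with Proposition \ref{prop25}. A direct computation gives $H_k(-1)=(-1)^k$ for every $k\geq 0$, and since $L(H_k)=H_{k+1}$, linearity yields $L(R)(-1)=-R(-1)$ for all $R\in\K[X]$. Feeding this into the recursion of Proposition \ref{prop25} produces
$$ehr_Q(-1)=-\sum_{\emptyset\neq O\in Top(Q)} ehr_{Q_{\mid [m]\setminus O}}(-1).$$
Setting $\epsilon(Q)=(-1)^{|Q|}ehr_Q(-1)$ and substituting $ehr_{Q_{\mid [m]\setminus O}}(-1)=(-1)^{m-|O|}\epsilon(Q_{\mid [m]\setminus O})$, the signs combine into the clean recursion
$$\epsilon(Q)=\sum_{\emptyset\neq O\in Top(Q)}(-1)^{|O|+1}\,\epsilon\big(Q_{\mid [m]\setminus O}\big),$$
which I would treat by induction on $m$, with base case $\epsilon(1)=1$.

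By the induction hypothesis each $\epsilon(Q_{\mid[m]\setminus O})$ equals the indicator that $Q_{\mid[m]\setminus O}$ is an antichain, so only open sets $O$ with antichain complement contribute. Here I would use the observation that a down-set which is an antichain is precisely a subset of $min(Q)$; consequently the contributing $O$ are exactly the complements $[m]\setminus S$ with $S\subseteq min(Q)$, the condition $O\neq\emptyset$ ruling out only $S=[m]$. The surviving sum is then $\sum_{S\subseteq min(Q)}(-1)^{m-|S|+1}=-(-1)^m(1-1)^{|min(Q)|}$, which vanishes as soon as $min(Q)\neq\emptyset$. If $Q$ is an antichain, one must in addition remove the term $S=[m]$, whose value $-1$ turns the vanishing sum into $\epsilon(Q)=1$; if $Q$ is not an antichain then $min(Q)\subsetneq[m]$, every $S$ is allowed, and the full sum gives $\epsilon(Q)=0$. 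This is exactly the asserted dichotomy and closes the induction.

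The one delicate point, and the step I expect to require the most care, is the final bookkeeping: identifying the open sets with antichain complement as the co-subsets of $min(Q)$, and, in the antichain case, correctly isolating the excluded term $S=[m]$ (equivalently $O=\emptyset$), since it is precisely this single term that converts the otherwise vanishing binomial sum into the value $1$.
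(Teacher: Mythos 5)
Your proof is correct, and its skeleton is the same as the paper's: reduce to the poset $\overline{P}$, induct using the recursion of Proposition \ref{prop25}, identify the open sets with discrete complement as the complements of the subsets $S\subseteq min(Q)$, and finish with a binomial cancellation (your $(1-1)^{|min(Q)|}$, the paper's $(1+X)^{|min(P)|}$ evaluated at $X=-1$). Where you genuinely depart is in the auxiliary lemma about $L$. The paper proves the polynomial identity $L(H_k(-X))=-H_{k+1}(-X)$ by a hockey-stick computation and then, in a second step, the ideal stability $L((X+1)\K[X])\subseteq (X+1)\K[X]$, which it needs in order to discard the non-discrete terms \emph{inside} $L(\cdot)_{\mid X=-1}$; its induction is run only for non-discrete posets, the discrete case being handled separately via $ehr_P(X)=X^{cl(P)}$. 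You instead use the pointwise identity $L(R)(-1)=-R(-1)$, which is immediate from $H_k(-1)=(-1)^k$ on the basis $(H_k)_{k\geq 0}$, and which lets you pull the evaluation through $L$ at once and turn the whole induction into the scalar recursion $\epsilon(Q)=\sum_{\emptyset\neq O}(-1)^{|O|+1}\epsilon(Q_{\mid [m]\setminus O})$. This collapses the paper's first two steps into one line and treats the discrete and non-discrete cases uniformly; what you prove about $L$ is weaker than the paper's polynomial identity, but it is exactly what the lemma requires. The bookkeeping you flag as delicate checks out: a down-set that is an antichain consists of minimal elements, every $S\subseteq min(Q)$ is a down-set of the poset $Q$, the constraint $O\neq\emptyset$ excludes precisely $S=[m]$, and in the antichain case the excluded term has value $(-1)^{m-m+1}=-1$, yielding $\epsilon(Q)=1$ as you claim. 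Your argument for $ehr^{str}_P(1)$ coincides with the paper's.
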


\begin{proof} If $P$ is discrete, then $ehr_P(X)=ehr_P^{str}(X)=X^{cl(P)}$ and the result is obvious.
Let us assume that $P$ is not discrete. There exists a unique map $f$ from $[n]$ to $[1]$; as $P$ is not discrete, $f\notin L^{str}_P(1)$, 
so $ehr^{str}_P(1)=0$. We now proceed with $ehr_P(-1)$.\\

\textit{First step.} Let us prove that $L(H_k(-X))=-H_{k+1}(-X)$ for all $k\geq 0$. For all $l,n\geq 0$:
$$H_l(-n)=(-1)^l \binom{n+l-1}{l}.$$
For all $k,n \geq 0$:
\begin{align*}
L(H_k(-X))(n+1)&=H_k(0)+\ldots+H_k(-n)\\
&=(-1)^k\sum_{i=0}^n \binom{i+k-1}{k}\\
&=(-1)^k \sum_{j=k}^{n+k-1}\binom{j}{k}\\
&=(-1)^k\binom{n+k}{k+1}\\
&=-H_{k+1}(-(n+1)).
\end{align*}

\textit{Second step.} Let us prove that $L((X+1)\K[X])\subseteq (X+1)\K[X]$.  For all $k\geq 2$, let us put
$H_k(-X)=X(X+1)L_k(X)$; $(L_k(X))_{k\geq 2}$ is a basis of $\K|X]$, which implies that
$(H_k(-X))_{k\geq 2}$ is a basis of $X(X+1)\K[X]$, and that $(X+1)\sqcup (H_k(-X))_{k\geq 2}$ is a basis of $(X+1)\K[X]$.
First:
$$L(X+1)=L(H_1(X)+H_0(X))=H_2(X)+H_1(X)=\frac{X(X-1)}{2}+X=\frac{X(X+1)}{2}\in (X+1)\K[X];$$
if $k\geq 2$, by the first step, $L(H_k(-X))=-H_{k+1}(-X) \in (X+1)\K[X]$. \\

\textit{Last step.} We can replace $P$ by $\overline{P}$, and we now assume that $P\in \P(n)$. 
There is nothing to prove if $n=0,1$. Let us assume the result at all rank $<n$. Then, by the second step and the induction hypothesis:
\begin{align*}
ehr_{\isoclass{P}}(-1)&=L\left(\sum_{\emptyset \neq O\in Top(P)} ehr_{\isoclass{P_{\mid [n]\setminus O}}}(X)\right)_{\mid X=-1}\\
&=L\left(\sum_{\substack{\mbox{\scriptsize $\emptyset \neq O\in Top(P)$}\\
\mbox{\scriptsize $P_{\mid [n]\setminus O}$ discrete}}} ehr_{\isoclass{P_{\mid [n]\setminus O}}}(X)\right)_{\mid X=-1}\\
&=L\left(\sum_{[n]\neq J\subseteq min(P)} ehr_{\isoclass{P_{\mid J}}}(X)\right)_{\mid X=-1}\\
&=L\left(\sum_{J\subseteq min(P)} ehr_{\isoclass{P_{\mid J}}}J(X)\right)_{\mid X=-1}\\
&=L\left(\sum_{J\subseteq min(P)} X^{|J|}\right)_{\mid X=-1}\\
&=L(\underbrace{(1+X)^{|min(P)|}}_{\in (X+1)\K[X]})_{\mid X=-1}\\
&=0.
\end{align*}
For the fourth equality, note that $P$ is not discrete, so $min(P)\neq P$. \end{proof}\\

In other words, for any $P\in \qp$, $(-1)^{cl(P)}ehr_P(-1)=ehr^{str}_P(1)=\varepsilon'(P)$.
By corollary \ref{cor10}:

\begin{theo} \label{theo37}
$ehr^{str}$ is the unique morphism from $\h_\qp$ to $\K[X]$ such that:
\begin{enumerate}
\item $ehr^{str}$ is a Hopf algebra morphism from $(\h_\qp,m,\Delta)$ to $(\K[X],m,\Delta)$.
\item $ehr^{str}$ is a bialgebra morphism from $(\h_\qp,m,\delta)$ to $(\K[X],m,\delta)$.
\end{enumerate}
Moreover, the character $\alpha^{str}$ is the inverse of $\lambda$ in $M_b$.
\end{theo}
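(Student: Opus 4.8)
The plan is to recognize $ehr^{str}$ as the distinguished morphism $\phi_1$ already constructed in Corollary \ref{cor10}. That corollary provides, under hypotheses 1--5 (which hold for $\h_\qp$ with the stated graduation), both the existence and the uniqueness of a morphism $\h_\qp\rightarrow\K[X]$ satisfying conditions 1 and 2, together with the explicit description $\phi_1=\phi_0\leftarrow\lambda^{*-1}$ and the characterizing identity $\phi_1(x)(1)=\varepsilon'(x)$. Thus the uniqueness assertion is handed to us for free, and the whole problem reduces to showing $ehr^{str}=\phi_1$, which will simultaneously pin down $\alpha^{str}$.

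First I would invoke Theorem \ref{theo23}: $ehr^{str}$ is a Hopf algebra morphism from $(\h_\qp,m,\Delta)$ to $(\K[X],m,\Delta)$, so condition 1 is automatic and $ehr^{str}\in E_{\h_\qp\rightarrow\K[X]}$. By the bijection of Theorem \ref{theo8}, there is then a unique $\alpha^{str}\in M_\qp$ with $ehr^{str}=\phi_0\leftarrow\alpha^{str}$. Next I would evaluate $ehr^{str}_P$ at $X=1$. Because the coaction is $\rho=\delta$ and $\phi_0(Q)=\lambda_Q X^{cl(Q)}$, setting $X=1$ collapses every monomial $X^{cl(Q)}$ to $1$, and the computation in the proof of Corollary \ref{cor9} gives
$$ehr^{str}_P(1)=(\phi_0\otimes\alpha^{str})\circ\delta(P)\big|_{X=1}=(\lambda\otimes\alpha^{str})\circ\delta(P)=(\lambda*\alpha^{str})(P).$$

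On the other hand, the lemma preceding the theorem yields $ehr^{str}_P(1)=\varepsilon'(P)$ for every $P\in\qp$. Comparing the two expressions gives $\lambda*\alpha^{str}=\varepsilon'$. Since $\lambda$ is invertible in $M_\qp$ --- indeed $\lambda(g_i)=1$ on each group-like generator $g_i$, so Lemma \ref{lem5} applies, exactly as recorded in the proof of Theorem \ref{theo7} --- this forces $\alpha^{str}=\lambda^{*-1}$, which is the ``moreover'' claim. Finally, Corollary \ref{cor10} identifies its distinguished morphism as $\phi_1=\phi_0\leftarrow\lambda^{*-1}$, whence $ehr^{str}=\phi_0\leftarrow\alpha^{str}=\phi_0\leftarrow\lambda^{*-1}=\phi_1$. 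Consequently $ehr^{str}$ also satisfies condition 2 and, by the uniqueness half of Corollary \ref{cor10}, is the only morphism $\h_\qp\rightarrow\K[X]$ fulfilling both conditions.

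I expect no conceptual obstacle here: the genuine combinatorial work is entirely contained in the preceding lemma's identity $ehr^{str}_P(1)=\varepsilon'(P)$ (equivalently $(-1)^{cl(P)}ehr_P(-1)=\varepsilon'(P)$), and once that is available the statement is a formal consequence of the cointeraction machinery of Section 1. The only points requiring care are bookkeeping ones: that the abstract character $\lambda_0$ of Theorem \ref{theo7} is precisely the character $\lambda$ attached to $\phi_0$ in the quasi-poset setting, and that evaluation at $X=1$ is the counit $\varepsilon'$ of $(\K[X],m,\delta)$ under the identification used throughout Corollaries \ref{cor9} and \ref{cor10}.
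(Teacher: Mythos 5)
Your proposal is correct and follows essentially the same route as the paper: the paper's proof is exactly the terse application of Corollary \ref{cor10} to the preceding lemma's identity $ehr^{str}_P(1)=\varepsilon'(P)$, and your write-up merely makes explicit the intermediate steps (Theorem \ref{theo23} placing $ehr^{str}$ in $E_{\h_\qp\rightarrow\K[X]}$, the evaluation-at-$1$ computation from Corollary \ref{cor9} giving $\lambda*\alpha^{str}=\varepsilon'$, and the invertibility of $\lambda$ from Lemma \ref{lem5}) that the paper leaves implicit. Your bookkeeping remarks --- that the abstract $\lambda_0$ of Theorem \ref{theo7} is the character $\lambda$ with $\phi_0(P)=\lambda_P X^{cl(P)}$, and that evaluation at $X=1$ is the counit of $(\K[X],m,\delta)$ --- are exactly the identifications the paper relies on.
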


\subsection{The character $\alpha$ and the duality principle}

\begin{theo} \label{theo38}
\begin{enumerate}
\item (Duality principle). For any quasi-poset $P$:
$$ehr^{str}_P(X)=(-1)^{cl(P)}ehr_P(-X).$$
\item For any quasi-poset $P$, $\alpha_P=(-1)^{cl(P)+cc(P)} \alpha^{str}_P$.
\item $\alpha$ is invertible in $M_\qp$. We denote by $\beta$ its inverse. For any quasi-poset $P$:
$$\beta_P=(-1)^{cl(P)+cc(P)}\lambda_P=(-1)^{cl(P)+cc(P)}\frac{\mu_P}{cl(P)!}.$$
\end{enumerate}\end{theo}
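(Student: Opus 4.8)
The plan is to establish the three assertions in order, with the duality principle (1) as the only substantial step and (2), (3) as formal consequences at the level of characters. Since $ehr_P=ehr_{\overline P}$, $ehr^{str}_P=ehr^{str}_{\overline P}$ and $cl(P)=\lvert\overline P\rvert$, I may reduce (1) to the case $P\in\P(n)$. Rather than unfolding the recursion of Proposition~\ref{prop25}, I would consider the linear map $D:\h_\qp\rightarrow\K[X]$ defined by $D(P)=(-1)^{cl(P)}ehr_P(-X)$ and prove $D=ehr^{str}$. First, $D$ is an algebra morphism, since $cl$ is additive for the disjoint union and $ehr$ is multiplicative (Theorem~\ref{theo23}). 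The key point is that $D$ is also a morphism for $\Delta$: for any open set $O$ of $P$ every $\sim_P$-class lies entirely in $O$ or in its complement, so $cl(P)=cl(P_{\mid [n]\setminus O})+cl(P_{\mid O})$ and the global sign $(-1)^{cl(P)}$ factors along the coproduct; together with $(ehr\otimes ehr)\circ\Delta=\Delta\circ ehr$ and the identification $\Delta(f)(X,Y)=f(X+Y)$, this yields $\Delta\circ D=(D\otimes D)\circ\Delta$. Hence $D\in E_{\h_\qp\rightarrow\K[X]}$.

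To identify $D$ I would use its value at $X=1$. The lemma preceding Theorem~\ref{theo37} states precisely that $D(P)(1)=(-1)^{cl(P)}ehr_P(-1)=\varepsilon'(P)$ for all $P$. By Corollary~\ref{cor9} applied with $\mu=\varepsilon'$ (equivalently by Corollary~\ref{cor10}), there is a unique Hopf algebra morphism $\phi$ with $\phi(x)(1)=\varepsilon'(x)$ for all $x$, namely $\phi_0\leftarrow\lambda^{*-1}$, which by Theorem~\ref{theo37} is $ehr^{str}$. Since $D$ is a Hopf algebra morphism satisfying the same condition, $D=ehr^{str}$; this is assertion (1).

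Assertion (2) follows by differentiating (1) at $0$. For a connected $P$ one has $cc(P)=1$, and by Theorem~\ref{theo35} $\alpha_P=\frac{d\,ehr_P}{dX}(0)$ and $\alpha^{str}_P=\frac{d\,ehr^{str}_P}{dX}(0)$; differentiating $ehr^{str}_P(X)=(-1)^{cl(P)}ehr_P(-X)$ and setting $X=0$ gives $\alpha^{str}_P=(-1)^{cl(P)+1}\alpha_P=(-1)^{cl(P)+cc(P)}\alpha_P$. As $\alpha$ and $\alpha^{str}$ are characters and $P\mapsto(-1)^{cl(P)+cc(P)}$ is multiplicative for the disjoint union, the identity $\alpha_P=(-1)^{cl(P)+cc(P)}\alpha^{str}_P$ extends from connected quasi-posets to all quasi-posets.

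For (3) I would introduce the character $\epsilon\in M_\qp$ given by $\epsilon_P=(-1)^{cl(P)+cc(P)}$ and show that the pointwise product $\gamma\mapsto\epsilon\odot\gamma$ is an involutive automorphism of $(M_\qp,*)$. This reduces to the identity $\epsilon_P=\epsilon_{P/\sim}\,\epsilon_{P|\sim}$ for every $\sim\triangleleft P$, which is immediate from $cl(P/\sim)=cl(\sim)$, $cc(P/\sim)=cc(P)$, $cl(P|\sim)=cl(P)$ and $cc(P|\sim)=cl(\sim)$; moreover $\epsilon\odot\varepsilon'=\varepsilon'$, because a discrete quasi-poset has $cl(P)=cc(P)$. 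Setting $\beta=\epsilon\odot\lambda$ and using (2) in the form $\alpha=\epsilon\odot\alpha^{str}$ together with $\alpha^{str}=\lambda^{*-1}$ (Theorem~\ref{theo37}), I obtain $\alpha*\beta=\epsilon\odot(\alpha^{str}*\lambda)=\epsilon\odot\varepsilon'=\varepsilon'$ and, symmetrically, $\beta*\alpha=\varepsilon'$; thus $\alpha$ is invertible with $\alpha^{*-1}=\beta$, and $\beta_P=(-1)^{cl(P)+cc(P)}\lambda_P=(-1)^{cl(P)+cc(P)}\frac{\mu_P}{cl(P)!}$ by Proposition~\ref{prop32}. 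The only delicate step is (1), and within it the verification that the degree-dependent sign $(-1)^{cl(P)}$ distributes over $\Delta$, after which the $X=1$ evaluation and the uniqueness of Corollaries~\ref{cor9}--\ref{cor10} close the argument; the remainder is formal character algebra.
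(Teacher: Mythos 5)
Your proof is correct and follows essentially the same route as the paper: for (1) you show that $P\mapsto(-1)^{cl(P)}ehr_P(-X)$ is a Hopf algebra morphism (the paper factors it as $\theta_2\circ ehr\circ\theta_1$ with $\theta_1(P)=(-1)^{cl(P)}P$ and $\theta_2(P(X))=P(-X)$, which is the same verification), evaluate at $X=1$ via the lemma giving $(-1)^{cl(P)}ehr_P(-1)=\varepsilon'(P)$, and conclude by the uniqueness of Corollaries \ref{cor9}--\ref{cor10} together with Theorem \ref{theo37}. Parts (2) and (3), which the paper declares immediate consequences, are filled in by you exactly as intended: differentiation at $0$ via Theorem \ref{theo35} for connected $P$ extended by multiplicativity, and the pointwise twist by $(-1)^{cl(P)+cc(P)}$ justified by $cl(P/\sim)=cl(\sim)$, $cc(P/\sim)=cc(P)$, $cl(P|\sim)=cl(P)$, $cc(P|\sim)=cl(\sim)$ --- the same computation the paper later deploys in the proof of Proposition \ref{prop39}.
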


\begin{proof} 1. We consider the morphism:
$$\phi:\left\{\begin{array}{rcl}
\h_\qp&\longrightarrow&\K[X]\\
P\in \qp&\longrightarrow&(-1)^{cl(P)}ehr_P(-X).
\end{array}\right.$$
We put:
\begin{align*}
\theta_1&:\left\{\begin{array}{rcl}
\h_\qp&\longrightarrow&\h_\qp\\
P\in \qp&\longrightarrow&(-1)^{cl(P)}P,
\end{array}\right.&
\theta_2&:\left\{\begin{array}{rcl}
\K[X]&\longrightarrow&\K[X]\\
P(X)&\longrightarrow&P(-X).
\end{array}\right.
\end{align*}
Both are Hopf algebra morphisms, and $\phi=\theta_2\circ ehr\circ \phi_1$, so $\phi$ is a Hopf algebra morphism.
If $P$ is a non discrete quasi-poset, then $\phi(P)(1)=(-1)^{cl(P)}ehr_P(-1)=0=\varepsilon'(x)$.
If $P$ is a discrete quasi-poset, then $ehr_P(X)=X^{cl(P)}$, so $\phi(P)(1)=1=\varepsilon'(x)$. By corollary \ref{cor10},
$\phi=\phi_1=ehr^{str}$. \\

2. and 3. Immediate consequences of the first point. \end{proof}

\begin{prop} \label{prop39}
The following map is a Hopf algebra automorphism:
\begin{align*}
\theta&:\left\{\begin{array}{rcl}
(\h_\qp,m,\Delta)&\longrightarrow&(\h_\qp,m,\Delta)\\
P&\longrightarrow&\displaystyle \sum_{\sim\triangleleft P} P/\sim.
\end{array}\right.
\end{align*}
Its inverse is:
\begin{align*}
\theta^{-1}&:\left\{\begin{array}{rcl}
(\h_\qp,m,\Delta)&\longrightarrow&(\h_\qp,m,\Delta)\\
P&\longrightarrow&\displaystyle \sum_{\sim \triangleleft P}(-1)^{cl(\sim)+cl(P)}P/\sim.
\end{array}\right.
\end{align*}
Moreover:
\begin{align*}
ehr^{str}\circ \theta&=ehr,&ehr\circ \theta^{-1}&=ehr^{str}.
\end{align*}\end{prop}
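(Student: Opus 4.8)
The plan is to realize $\theta$ as one of the endomorphisms produced by the cointeraction and then to identify the corresponding character of $(\h_\qp,m,\delta)$ by evaluating Ehrhart polynomials at $1$ and $-1$. Let $u\in M_\qp$ be the character with $u_P=1$ for every quasi-poset $P$; it is a character because the product of $\h_\qp$ is disjoint union. Since $\rho=\delta$ and $\delta(\isoclass P)=\sum_{\sim\triangleleft P}\isoclass{P/\sim}\otimes\isoclass{P|\sim}$, the endomorphism $Id\leftarrow u$ of Proposition \ref{prop4} (applied to the cointeraction of $(\h_\qp,m,\Delta)$ and $(\h_\qp,m,\delta)$, exactly as in Corollary \ref{cor20}) is $(Id\leftarrow u)(P)=\sum_{\sim\triangleleft P}u_{\isoclass{P|\sim}}\,P/\sim=\sum_{\sim\triangleleft P}P/\sim=\theta(P)$. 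As $u_{\tdun{$n$}}=1\neq 0$ for all $n$, Lemma \ref{lem5} makes $u$ invertible in $M_\qp$, and then Proposition \ref{prop4}(3) shows $\theta=Id\leftarrow u$ is a Hopf algebra automorphism with inverse $\theta^{-1}=Id\leftarrow u^{*-1}$. Thus everything reduces to computing $u$ and $u^{*-1}$ in the monoid $(M_\qp,*)$.

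The crucial step I would prove is the identity $u=\lambda*\alpha$, obtained by evaluating the first formula of Theorem \ref{theo35} at $X=1$. Indeed $\mu_{P/\sim}/cl(\sim)!=\lambda_{P/\sim}$ (because $cl(P/\sim)=cl(\sim)$), so that formula reads $ehr_P(X)=\sum_{\sim\triangleleft P}\lambda_{P/\sim}\alpha_{P|\sim}X^{cl(\sim)}$, whence $ehr_P(1)=\sum_{\sim\triangleleft P}\lambda_{P/\sim}\alpha_{P|\sim}=(\lambda*\alpha)(\isoclass P)$. On the other hand $ehr_P(1)=\sharp L_P(1)=1=u_P$, since the unique map into $[1]$ is always weakly order-preserving. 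Hence $u=\lambda*\alpha$. I regard this single observation as the heart of the matter; everything else is formal once the inverses $\alpha^{str}=\lambda^{*-1}$ (Theorem \ref{theo37}) and $\beta=\alpha^{*-1}$ (Theorem \ref{theo38}) are invoked.

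For the Ehrhart relations I would first record the elementary composition rule: for any bialgebra morphism $\psi:(\h_\qp,m,\Delta)\to H$ and any $\nu\in M_\qp$ one has $\psi\circ(Id\leftarrow\nu)=\psi\leftarrow\nu$, since both send $P$ to $\sum_{\sim\triangleleft P}\psi(P/\sim)\,\nu(\isoclass{P|\sim})$. Applying it to $\psi=ehr^{str}=\phi_0\leftarrow\alpha^{str}$ and $\nu=u$, together with $u=\lambda*\alpha$ and $\alpha^{str}=\lambda^{*-1}$, gives
$$ehr^{str}\circ\theta=ehr^{str}\leftarrow u=\phi_0\leftarrow(\alpha^{str}*u)=\phi_0\leftarrow(\lambda^{*-1}*\lambda*\alpha)=\phi_0\leftarrow\alpha=ehr.$$
Symmetrically, from $u^{*-1}=(\lambda*\alpha)^{*-1}=\beta*\alpha^{str}$ one gets $ehr\circ\theta^{-1}=\phi_0\leftarrow(\alpha*u^{*-1})=\phi_0\leftarrow(\alpha*\beta*\alpha^{str})=\phi_0\leftarrow\alpha^{str}=ehr^{str}$.

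Finally I would turn $u^{*-1}=\beta*\alpha^{str}$ into the stated closed formula. For a quasi-poset $P$, $(\beta*\alpha^{str})(\isoclass P)=\sum_{\sim\triangleleft P}\beta_{P/\sim}\alpha^{str}_{P|\sim}$; inserting $\beta_{P/\sim}=(-1)^{cl(P/\sim)+cc(P/\sim)}\lambda_{P/\sim}=(-1)^{cl(\sim)+cc(P)}\lambda_{P/\sim}$ (Theorem \ref{theo38}, with $cl(P/\sim)=cl(\sim)$ and $cc(P/\sim)=cc(P)$) and recognizing $\sum_{\sim}(-1)^{cl(\sim)}\lambda_{P/\sim}\alpha^{str}_{P|\sim}=ehr^{str}_P(-1)$ from Theorem \ref{theo35} at $X=-1$, the duality principle $ehr^{str}_P(-1)=(-1)^{cl(P)}ehr_P(1)=(-1)^{cl(P)}$ yields $u^{*-1}_P=(-1)^{cl(P)+cc(P)}$. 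Feeding this into $\theta^{-1}=Id\leftarrow u^{*-1}$ and using $cl(P|\sim)=cl(P)$, $cc(P|\sim)=cl(\sim)$ gives $\theta^{-1}(P)=\sum_{\sim\triangleleft P}(-1)^{cl(P|\sim)+cc(P|\sim)}P/\sim=\sum_{\sim\triangleleft P}(-1)^{cl(P)+cl(\sim)}P/\sim$, as claimed. The only genuine obstacle is establishing $u=\lambda*\alpha$; the bookkeeping with $cl$ and $cc$ under $P/\sim$ and $P|\sim$ is routine given the remarks following the definition of $P$-compatibility.
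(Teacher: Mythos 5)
Your proposal is correct and follows essentially the same route as the paper's own proof: the paper likewise realizes $\theta=Id\leftarrow \iota$ (your $u$) via corollary \ref{cor20}, establishes the key identity $\iota=\lambda*\alpha$ by evaluating $ehr_P$ at $X=1$ through theorem \ref{theo35}, computes $\iota^{*-1}=\beta*\alpha^{str}$ and evaluates it with the duality principle to get the sign $(-1)^{cl(P)+cc(P)}$ and hence the closed formula for $\theta^{-1}$, and obtains $ehr^{str}\circ\theta=ehr$ by the same convolution manipulation $\phi_0\leftarrow(\alpha^{str}*\lambda*\alpha)=\phi_0\leftarrow\alpha$. The only (harmless) additions on your side are the explicit verification of $ehr\circ\theta^{-1}=ehr^{str}$ and the spelled-out composition rule $\psi\circ(Id\leftarrow\nu)=\psi\leftarrow\nu$, which is proposition \ref{prop4} with $\phi=Id$.
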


\begin{proof} Let $\iota$ be the character of $\h_\qp$ which sends any quasi-poset to $1$. 
By corollary \ref{cor20}, $\theta$ is an automorphism and $\theta^{-1}=\phi_{\iota^{*-1}}$. For any quasi-poset $P$:
\begin{align*}
\iota(1)&=1=ehr_P(1)=\sum_{\sim \triangleleft P} \frac{\mu_{P/\sim}}{cl(P/\sim)!}\alpha_{P|\sim}
=\sum_{\sim \triangleleft P} \lambda_{P/\sim}\alpha_{P|\sim}=\lambda*\alpha(P),
\end{align*}
so $\iota=\lambda*\alpha$; hence, its inverse is $\beta *\alpha^{str}$, and for any quasi-poset $P$,
as $cl(P/\sim)=cl(\sim)$ and $cc(P/\sim)=cc(P)$ for any $\sim\triangleleft P$:
\begin{align*}
\beta *\alpha^{str}(P)&=\sum_{\sim \triangleleft P}(-1)^{cl(\sim)+cc(P)}\frac{\mu_{P/\sim}}{cl(P/\sim)!} \alpha^{str}_{P|\sim}\\
&=(-1)^{cc(P)}ehr^{str}_P(-1)\\
&=(-1)^{cc(P)+cl(P)}ehr^{str}(1)\\
&=(-1)^{cc(P)+cl(P)}.
\end{align*}
Hence:
\begin{align*}
\theta^{-1}(P)&=Id \leftarrow (\beta *\alpha^{str})(P)
=\sum_{\sim \triangleleft P} (-1)^{cc(P|\sim)+cl(P|\sim)}P/\sim=\sum_{\sim \triangleleft P}  (-1)^{cl(\sim)+cl(P)} P/\sim.
\end{align*}
Moreover:
\begin{align*}
ehr^{str}\circ \theta&=(\phi_0\leftarrow \alpha^{str})\circ (Id\leftarrow \iota)\\
&=((\phi_0\leftarrow\alpha^{str})\circ Id)\leftarrow \iota\\
&=(\phi_0\leftarrow\alpha^{str})\leftarrow\iota\\
&=\phi_0\leftarrow (\alpha^{str}*\iota)\\
&=\phi_0\leftarrow (\alpha^{str}*\lambda*\alpha)\\
&=\phi_0\leftarrow \alpha\\
&=ehr.
\end{align*}\end{proof}

\subsection{A link with Bernoulli numbers}

For any $k\in \mathbb{N}$, let $c_k$ be the corolla quasi-poset  with $k$ leaves: $c_k=([k+1], \leq_{c_k})$,
with $1\leq_{c_k} 2,\ldots,k+1$:
\begin{align*}
c_0&=\tdun{$1$},&c_1&=\tddeux{$1$}{$2$},&c_2&=\tdtroisun{$1$}{$3$}{$2$},&c_3&=\tdquatreun{$1$}{$4$}{$3$}{$2$}\ldots
\end{align*}
B proposition \ref{prop34}, $\displaystyle \lambda_{c_k}=\frac{1}{k+1}$. Moreover:
\begin{align*}
L_{c_k}&=\{f:[k+1]\longrightarrow \mathbb{N}^*\mid f(1)\leq f(2),\ldots, f(k+1)\},\\
L_{c_k}^{str}&=\{f:[k+1]\longrightarrow \mathbb{N}^*\mid f(1)<f(2),\ldots, f(k+1)\},
\end{align*}
so, for all $n \geq 1$:
\begin{align*}
Ehr_{c_k}^{str}(n)&=(n-1)^k+\ldots+1^k=S_k(n),
\end{align*}
where $S_k(X)$ is the unique polynomial such that for all $n\geq 1$, $S_k(n)=1^k+\ldots+(n-1)^k$.
As a consequence, $\alpha_{c_k}^{str}$ is equal to the $k$-th Bernoulli number $b_k$. \\

Let $\sim\triangleleft c_k$. As the equivalence classes of $\sim$ are connected:
\begin{itemize}
\item The equivalence class of the minimal element $1$ of $c_k$ contains $i$ leaves, $0\leq i \leq k$.
\item The other equivalence classes are formed by a unique leaf.
\end{itemize}
Hence:
\begin{align*}
\delta(\isoclass{c_k})&=\sum_{i=0}^k \binom{k}{i} \isoclass{c'_{i,k-i}}\otimes \isoclass{c_i} \tun^{k-i},
\end{align*}
where $c'_{i,k-i}$ is the quasi poset on $[k+1]$ such that:
$$1\sim_{c'_{i,k-i}}\ldots \sim_{c'_{i,k-i}} i+1 \leq_{c'_{i,k-i}} i+2,\ldots k+1.$$
Hence, by theorem \ref{theo35}:
\begin{align*}
S_k(X)&=\sum_{i=0}^k \binom{k}{i} \lambda_{c'_{i,k-i}}b_iX^{k-i+1}\\
&=\sum_{i=0}^k \binom{k}{i} \lambda_{\overline{c'_{i,k-i}}}b_iX^{k-i+1}\\
&=\sum_{i=0}^k \binom{k}{i} \lambda_{c_{k-i}}b_iX^{k-i+1}\\
&=\sum_{i=0}^k \binom{k}{i} \frac{b_i}{k-i+1}X^{k-i+1}.
\end{align*}
We recover in this way Faulhaber's formula. For all $n\geq1$, $ehr_{c_k}(n)=n^k+\ldots+1^k$, 
and the duality principle gives, for all $n\geq 1$:
$$(-1)^{k+1}S_k(-n)=1^k+\ldots+n^k=S_k(n)+n^k.$$

\section{Noncommutative version}

\subsection{Reminders on packed words}

Let us recall the construction of the Hopf algebra of packed words $\WQSym$ \cite{NovelliThibon,NovelliThibon2}.

\begin{defi}\label{defi40}
Let $w=x_1\ldots x_n$ be a word which letters are positive integers.
\begin{enumerate}
\item We shall say that $w$ is a packed word if there exists an integer $k$ such that $\{x_1,\ldots,x_n\}=[k]$.
The set of packed words of length $n$ is denoted by $\bfPW(n)$; the set of all packed words is denoted by $\bfPW$.
\item There exists a unique increasing bijection $f:\{x_1,\ldots,x_n\}\longrightarrow [k]$ for a well-chosen $k$.
We denote by $Pack(w)$ the packed word $f(x_1)\ldots f(x_k)$. Note that $w$ is packed if, and only if, $w=Pack(w)$.
\item Let $I\subseteq \N$. Let $i_1<\ldots<i_p$ be the indices $i$ such that $x_i \in I$. We denote by 
$w_I$ the word $x_{i_1}\ldots x_{i_p}$.
\end{enumerate}\end{defi}

As a vector space, $\WQSym$ is generated by the set $\bfPW$. The product is given by:
$$\forall u\in \bfPW(k),\: \forall v\in \bfPW(l),\:
u.v=\sum_{\substack{w=x_1\ldots x_{k+l}\in \bfPW(k+l),\\ Pack(x_1\ldots x_k)=u,\\ Pack(x_{k+1}\ldots x_{k+l})=v}} w.$$
The unit is the empty word $1$. The coproduct is given by:
\begin{align*}
\forall w\in \bfPW, \:\Delta(w)&=\sum_{k=0}^{\max(w)} w_{\{1,\ldots,k\}}\otimes Pack(w_{\{k+1,\ldots,\max(w)\}}).
\end{align*}
For example:
\begin{align*}
(11).(11)&=(1111)+(1122)+(2211),\\
(11).(12)&=(1112)+(1123)+(2212)+(2213)+(3312),\\
(11).(21)&=(1121)+(1132)+(2231)+(3321),\\
(12).(11)&=(1211)+(1222)+(1233)+(1322)+(2311),\\
(12).(12)&=(1212)+(1213)+(1223)+(1234)+(1323)+(1324)\\
&+(1423)+(2312)+(2313)+(2314)+(2413)+(3412),\\
(12).(21)&=(1221)+(1231)+(1232)+(1243)+(1332)+(1342)\\
&+(1432)+(2321)+(2331)+(2341)+(2431)+(3421),\\ \\
\Delta(111)&=(111)\otimes 1+(111)\otimes 1,\\
\Delta(212)&=(212)\otimes 1+(1)\otimes (11)+1\otimes (212),\\
\Delta(312)&=(312)\otimes 1+(1)\otimes (21)+(12)\otimes (1)+1\otimes (312).
\end{align*}

\subsection{Hopf algebra morphisms in $\WQSym$}

\begin{prop}\label{prop41}
 The two following maps are surjective Hopf algebra morphisms:
\begin{align*}
EHR&:\left\{\begin{array}{rcl}
(\h_\QP,m,\Delta)&\longrightarrow&\WQSym\\
P&\longrightarrow&\displaystyle \sum_{w\in W_P}w,
\end{array}\right.\\
EHR^{str}&:\left\{\begin{array}{rcl}
(\h_\QP,m,\Delta)&\longrightarrow&\WQSym\\
P&\longrightarrow&\displaystyle \sum_{w\in W^{str}_P}w.
\end{array}\right.&
\end{align*} \end{prop}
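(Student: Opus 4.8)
The plan is to check that $EHR$ and $EHR^{str}$ are bialgebra morphisms and surjective; since $\h_\QP$ and $\WQSym$ are both graded connected Hopf algebras (graded by cardinality and by word length respectively), a bialgebra morphism is automatically a Hopf algebra morphism, so it suffices to verify compatibility with the products, with the coproducts, with unit and counit, and then surjectivity. The guiding observation is that, viewing a packed word $w$ of length $n$ as a map $w:[n]\to\N^*$, membership $w\in W_P$ means precisely that $w$ is order-preserving for $P$ (and $w\in W^{str}_P$ adds strictness on comparable non-equivalent pairs); thus $EHR$ is a word-valued refinement of $ehr$, and most arguments lift those of Theorem \ref{theo23}.

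For the products I would first prove the block characterization. For $P\in\QP(k)$ and $Q\in\QP(l)$ the order $\leq_{PQ}$ carries no relation between $[k]$ and $\{k+1,\dots,k+l\}$, so a packed word $w=x_1\cdots x_{k+l}$ lies in $W_{PQ}$ if and only if $Pack(x_1\cdots x_k)\in W_P$ and $Pack(x_{k+1}\cdots x_{k+l})\in W_Q$, and since packing preserves both order relations and equalities the same holds verbatim for the strict versions. This index set is exactly the one in the product of $\WQSym$, so expanding $EHR(P)\cdot EHR(Q)=\bigl(\sum_{u\in W_P}u\bigr)\bigl(\sum_{v\in W_Q}v\bigr)$ and applying the product rule recovers $\sum_{w\in W_{PQ}}w=EHR(PQ)$. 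The unit is immediate from $W_1=\{1\}$, and $\varepsilon\circ EHR=\varepsilon$ because every word in $W_P$ with $P\neq 1$ has positive length.

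The coproduct is the heart of the matter, and I would reuse the bijection $\upsilon$ from the proof of Theorem \ref{theo23}. Given $w\in W_P$ and a threshold $0\le j\le\max(w)$, set $O_j=\{i\in[n]\mid w(i)>j\}$; because $w$ is order-preserving, $O_j$ is an open set of $P$, and reading positions in increasing order (which is what standardization records) gives $w_{\{1,\dots,j\}}\in W_{Std(P_{\mid[n]\setminus O_j})}$ and $Pack(w_{\{j+1,\dots,\max(w)\}})\in W_{Std(P_{\mid O_j})}$, matching the two tensor legs of the $\WQSym$-coproduct. Conversely, from $O\in top(P)$ together with $u\in W_{Std(P_{\mid[n]\setminus O})}$ and $v\in W_{Std(P_{\mid O})}$ one rebuilds $(w,j)$ with $j=\max(u)$ by placing $u$ on $[n]\setminus O$ and $v+j$ on $O$; openness of $O$ forces any relation crossing into $O$ to receive a strict jump $w(i)\le j<w(i')$, so the rebuilt $w$ lies in $W_P$. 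This is the word-level form of $\upsilon$, giving $\Delta\circ EHR=(EHR\otimes EHR)\circ\Delta$ term by term. For $EHR^{str}$ the same bijection restricts: a coincidence $w(i)=w(i')$ on a comparable pair can only occur inside one block (the crossing case being strict), so strictness of $w$ is equivalent to strictness of its two pieces.

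Finally, for surjectivity I would associate to each packed word $w$ of length $n$ the total preorder $P_w=([n],\leq_w)$ with $i\leq_w j\iff w(i)\le w(j)$. A direct comparison with Definition \ref{defi28} gives $W_{P_w}=\{w'\mid w'\le w\}$ and $W^{str}_{P_w}=\{w\}$; hence $EHR^{str}(P_w)=w$, so $EHR^{str}$ is surjective at once, while $EHR(P_w)=w+\sum_{w'<w}w'$ is triangular for the order of Definition \ref{defi28}, and inducting upward from the minimal constant word $(1\cdots1)$ shows every packed word lies in the image of $EHR$. The main obstacle is the bookkeeping of the coproduct step, namely that $O_j$ is open, that standardization agrees with the reading of subwords, and that strictness transfers to both legs; but since this merely refines the established bijection $\upsilon$ of Theorem \ref{theo23}, no genuinely new combinatorial difficulty appears.
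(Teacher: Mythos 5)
Your proposal is correct and takes essentially the same route as the paper's proof: the same block characterization of $W_{PQ}$ and $W^{str}_{PQ}$ for multiplicativity, the same threshold bijection $(w,j)\leftrightarrow (O,w_1,w_2)$ for compatibility with $\Delta$ (the paper's map $F$ between its sets $A$ and $B$), and the same total preorder $P_w$ with $W^{str}_{P_w}=\{w\}$ followed by a triangularity argument for surjectivity. The only cosmetic difference is that your triangular step uses the order on packed words from definition \ref{defi28} (noting $EHR(P_w)=\sum_{w'\leq w}w'$), whereas the paper inducts on $\max(w')<\max(w)$; the two are equivalent since $w'\leq w$ with $w'\neq w$ forces $\max(w')<\max(w)$.
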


\begin{proof} Let $P\in \QP(k)$, $Q\in \QP(l)$, and $w$ be a packed word of length $k+l$. Then:
\begin{itemize}
\item $w\in W_{PQ}$ if, and only if, $Pack(w_1\ldots w_k)\in W_P$ and $Pack(w_{k+1}\ldots w_{k+l})\in W_Q$.
\item $w\in W^{str}_{PQ}$ if, and only if, $Pack(w_1\ldots w_k)\in W^{str}_P$ and $Pack(w_{k+1}\ldots w_{k+l})\in W^{str}_Q$.
\end{itemize}
This implies that :
\begin{align*}
EHR(PQ)&=EHR(P)EHR(Q),&EHR^{str}(PQ)&=EHR^{str}(P)EHR^{str}(Q).
\end{align*}

Let $P\in \QP(n)$. We consider the two sets:
\begin{align*}
A&=\{(w,k)\mid w\in W_P, 0\leq k\leq \max(w)\},\\
B&=\{(O,w_1,w_2)\mid O\in Top(P), w_1\in W_{Pack(P_{\mid [n]\setminus O})}, w_2 \in W_{Pack(P_{\mid O})}\}.
\end{align*}
We define a bijection between $A$ and $B$ by $F(w,k)=(O,w_1,w_2)$,
where:
\begin{itemize}
\item $O=w^{-1}(\{k+1,\ldots,\max(w)\})$.
\item $w_1=Pack(w_{\{1,\ldots,k\}})$.
\item$w_2=Pack(w_{\{k+1,\ldots,\max(w)\}})$.
\end{itemize} 
Then:
\begin{align*}
\Delta \circ EHR(P)&=\sum_{(w,k)\in A} Pack(w_{\{1,\ldots,k\}}) \otimes Pack(w_{\{k+1,\ldots,\max(w)\}})\\
&=\sum_{(O,w_1,w_2)\in B}w_1\otimes w_2\\
&=(EHR \otimes EHR)\circ \Delta(P).
\end{align*}
So $EHR$ is a Hopf algebra morphism. In the same way, $EHR^{str}$ is a Hopf algebra morphism. \\

Let $w$ be a packed word of length $n$. We define a quasi-poset structure on $[n]$ by
$i\leq_P j$ if, and only if, $w_i\leq w_j$. Then $W^{str}_P=\{w\}$, so $EHR^{str}(P)=w$: $EHR^{str}$ is surjective.
If $w'\in W_P$, then $\max(w')\leq \max(w)$ with equality if, and only if, $w=w'$. Hence:
$$EHR(P)=w+\mbox{words $w'$ with $\max(w')<\max(w)$}.$$
By a triangular argument, $EHR$ is surjective. \end{proof}\\

\textbf{Examples}.
\begin{align*}
EHR(\tdun{$1$})&=(1),&EHR^{str}(\tdun{$1$})&=(1),\\
EHR(\tddeux{$1$}{$2$})&=(12)+(11),&EHR^{str}(\tddeux{$1$}{$2$})&=(12),\\
EHR(\tddeux{$2$}{$1$})&=(21)+(11),&EHR^{str}(\tddeux{$2$}{$1$})&=(21),\\
EHR(\tdun{$1$}\tdun{$2$})&=(12)+(21)+(11),&EHR^{str}(\tdun{$1$}\tdun{$2$})&=(12)+(21)+(11),\\
EHR(\tdun{$1,2$}\hspace{2mm})&=(11),&EHR^{str}(\tdun{$1,2$}\hspace{2mm})&=(11).
\end{align*}

\textbf{Remark.} The Hopf algebra $\WQSym$ has a polynomial representation \cite{NovelliThibon2}. Let $X=\{x_1,x_2,\ldots\}$
be an infinite, totally ordered alphabet; then, for any packed word $w$:
$$P_w(X)=\sum_{w'\in X^*,\: Pack(w')=w} w'.$$
With this polynomial representation, for any $P\in \QP(n)$:
\begin{align*}
EHR(P)&=\sum_{f\in L_P} x_{f(1)}\ldots x_{f(n)},&EHR^{str}(P)&=\sum_{f\in L^{str}_P} x_{f(1)}\ldots x_{f(n)}.
\end{align*}

\begin{prop}\label{prop42}
The following map is a Hopf algebra automorphism:
\begin{align*}
\Theta&:\left\{\begin{array}{rcl}
(\h_\QP,m,\Delta)&\longrightarrow&(\h_\QP,m,\Delta)\\
P&\longrightarrow&\displaystyle \sum_{\sim\triangleleft P} P/\sim.
\end{array}\right.
\end{align*}
Its inverse is:
\begin{align*}
\Theta^{-1}&:\left\{\begin{array}{rcl}
(\h_\QP,m,\Delta)&\longrightarrow&(\h_\QP,m,\Delta)\\
P&\longrightarrow&\displaystyle \sum_{\sim \triangleleft P}(-1)^{cl(P)+cl(\sim)}P/\sim.
\end{array}\right.
\end{align*}
Moreover, $EHR^{str}\circ \Theta=EHR$ and $EHR\circ \Theta^{-1}=EHR^{str}$.
\end{prop}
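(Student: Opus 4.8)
The plan is to identify $\Theta$ with the image of the unit character under the action of Corollary \ref{cor20}, and then to reduce the whole statement to a single combinatorial identity. Let $\iota\in M_\qp$ be the character sending every quasi-poset to $1$. Then for every $P$ we have $\iota_{\isoclass{P|\sim}}=1$, so $\Theta=\phi_\iota=Id\leftarrow\iota$ in the notation of Corollary \ref{cor20}. Since $\iota_{\tdun{$n$}}=1$ is nonzero for every $n\geq 1$, that corollary immediately shows that $\Theta$ is a Hopf algebra automorphism of $(\h_\QP,m,\Delta)$, with inverse $\Theta^{-1}=\phi_{\iota^{*-1}}$.

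To make $\Theta^{-1}$ explicit I reuse the computation already carried out in the proof of Proposition \ref{prop39}: there $\iota=\lambda*\alpha$, and its convolution inverse satisfies $\iota^{*-1}(\isoclass{Q})=(-1)^{cl(Q)+cc(Q)}$ for every quasi-poset $Q$. Feeding this into $\phi_{\iota^{*-1}}$ and using $cl(P|\sim)=cl(P)$ and $cc(P|\sim)=cl(\sim)$ (the Remarks following the definition of $P$-compatibility), I obtain
$$\Theta^{-1}(P)=\sum_{\sim\triangleleft P}(-1)^{cl(P|\sim)+cc(P|\sim)}\,P/\sim=\sum_{\sim\triangleleft P}(-1)^{cl(P)+cl(\sim)}\,P/\sim,$$
which is the announced formula.

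It then remains to prove $EHR^{str}\circ\Theta=EHR$. The second point of Proposition \ref{prop4}, applied with $\phi=Id$ and the bialgebra morphism $EHR^{str}:\h_\QP\to\WQSym$, gives $EHR^{str}\circ(Id\leftarrow\iota)=(EHR^{str}\circ Id)\leftarrow\iota=(EHR^{str}\otimes\iota)\circ\rho$; since $\iota\equiv 1$ this equals $\sum_{\sim\triangleleft P}EHR^{str}(P/\sim)=\sum_{\sim\triangleleft P}\sum_{w\in W^{str}_{P/\sim}}w$. Thus the identity $EHR^{str}\circ\Theta=EHR$ is equivalent to the set-theoretic equality
$$W_P=\bigsqcup_{\sim\triangleleft P}W^{str}_{P/\sim}.$$
I would prove this by exhibiting mutually inverse bijections. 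Given $w\in W_P$, define an equivalence $\approx_w$ on $[n]$ by declaring $i\approx_w j$ when $w(i)=w(j)$ and $i,j$ lie in the same connected component of $P_{\mid w^{-1}(w(i))}$; its classes are connected, and because $w$ is $P$-monotone and constant on $\approx_w$-classes, any pair with $i\leq_{P/\approx_w}j$ and $j\leq_{P/\approx_w}i$ forces $w(i)=w(j)$ with all intermediate values equal, whence $i\approx_w j$. This yields $\sim_{P/\approx_w}=\approx_w$, so $\approx_w\triangleleft P$, and the same monotonicity-plus-strictness argument shows $w\in W^{str}_{P/\approx_w}$. Conversely, any $w'\in W^{str}_{P/\sim}$ is $P$-monotone because $\leq_P\subseteq\leq_{P/\sim}$, hence lies in $W_P$, and the strictness of $w'$ together with $\sim_{P/\sim}=\sim$ gives $\approx_{w'}=\sim$. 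These two assignments are inverse, establishing the disjoint union. Composing $EHR^{str}\circ\Theta=EHR$ with $\Theta^{-1}$ on the right then yields $EHR\circ\Theta^{-1}=EHR^{str}$.

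The main obstacle is the bijection $W_P=\bigsqcup_{\sim\triangleleft P}W^{str}_{P/\sim}$; everything else is a transfer, through the action $\leftarrow$, of the commutative argument of Proposition \ref{prop39}. Within that bijection the delicate points are verifying the exact equality $\sim_{P/\approx_w}=\approx_w$ (not merely the trivial inclusion), which is what secures $P$-compatibility, and checking that the strictness condition defining $W^{str}_{P/\sim}$ corresponds precisely to the ``connected level set'' relation $\approx$, which is exactly what makes the forward and backward maps mutually inverse.
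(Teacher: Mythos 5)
Your proposal is correct and follows essentially the same route as the paper: it invokes Corollary \ref{cor20} with the unit character $\iota$ to get that $\Theta=\phi_\iota$ is an automorphism with inverse $\phi_{\iota^{*-1}}$, transfers the computation $\iota^{*-1}=\beta*\alpha^{str}$ from the proof of Proposition \ref{prop39} to obtain the explicit formula for $\Theta^{-1}$, and reduces $EHR^{str}\circ\Theta=EHR$ to the same key bijection $W_P=\bigsqcup_{\sim\triangleleft P}W^{str}_{P/\sim}$, with your $\approx_w$ (connected components of the level sets $w^{-1}(w(i))$) being exactly the paper's $\sim_w$. The only difference is that you compress some verifications (notably $\sim_{P/\approx_w}=\approx_w$ and $\approx_{w'}=\sim$) that the paper writes out in full, but the ideas you indicate for them are precisely the paper's arguments.
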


\begin{proof} By corollary \ref{cor20}, $\Theta=\phi_\iota$ is a Hopf algebra automorphism, where $\iota$ is defined in the proof
of proposition \ref{prop39}. Its inverse is $\phi_{\iota^{*-1}}$. \\

Let us prove that:
$$W_G=\sum_{\sim\triangleleft P} W_{G/\sim}^{str}.$$
Let $w \in W_G$; we define an equivalence $\sim_w$ by $x\sim_w y$ if $w(x)=w(y)$ and $x$ and $y$ are in the same connected component
of $w^{-1}(w(x))$. By definition, the equivalence classes of $\sim_w$ are connected.
If $x\sim_{P/\sim_w} y$, there exists $x_1,x'_1\ldots,x_k,x'_k,y_1,y'_1\ldots,y_l,y'_l$ such that:
\begin{align*}
&x\leq_P x_1\sim_w x'_1\leq_P\ldots \leq_P x_k\sim_w x'_k \leq_P y,\\
&y\leq_P y_1\sim_w y'_1\leq_P\ldots \leq_P y_l\sim_w y'_l \leq_P x.
\end{align*}
As $w\in W_P$, $w(x)\leq w(x_1)=w(x'_1)\leq \ldots \leq w(x_k)=w(x'_k)\leq w(y)$; by symmetry, $w(x)=w(x_1)=\ldots=w(x'_k)=w(y)=i$.
Moreover, as the equivalence classes of $\sim_w$ are connected, $x$ and $y$ are in the same connected component of $w^{-1}(i)$, so 
$x \sim_w y$: $\sim_w \triangleleft P$.

If  $x\leq_P y$ or $x\sim_w y$, then $w(x) \leq w(q)$. By transitive closure, if $x\leq_{P/\sim_w} y$, then $w(x)\leq w(y)$,
so $w\in W_{P/\sim_w}$. Moreover, if $w(i)\neq w(j)$, we do not have  $x\sim_w y$, so $w\in W^{str}_{P/\sim_w}$.

Let us assume that $\sim \triangleleft P$ and let $w\in W^{str}_{P/\sim}$. If $x\leq_P y$, then $x\leq_{P/\sim} y$, so $w(x)\leq w(y)$:
$W^{str}_{P/\sim}\subseteq W_P$.

Let us assume that $w\in W_{P/\sim}^{str}$, with $\sim \triangleleft P$. If $x \sim y$, then $w(x)=w(y)=i$
and $x$ and $y$ are in the same connected component of $P|\sim$, so are in the the same connected component of $w^{-1}(i)$:
$x\sim_w y$. If $x\sim_w y$, then $w(x)=w(y)=i$ and there exists $x_1,x'_1\ldots,x_k,x'_k$ with $w(x_1)=w(x'_1)=\ldots=w(x_k)=w(x'_k)=i$ such that:
$$x\leq_P x_1 \geq_P x'_1\leq_P\ldots \geq_P x'_k \leq_P y.$$
As $w \in W_{P/\sim}^{str}$, $x\sim_{P/\sim} x_1$, $x_1\sim_{P/\sim} x'_1,\ldots, x'_k \sim_{P/\sim} y$.
So $x\sim_{P/\sim} y$; as $\sim\triangleleft P$, $x\sim y$. Finally, $\sim=\sim_w$.  \\

We obtain that:
\begin{align*}
EHR(P)&=\sum_{w \in W_P}w=\sum_{\sim\triangleleft P}\sum_{w\in W_{P/\sim}^{str}} w
=\sum_{\sim \triangleleft P} EHR^{str}(P/\sim)=EHR^{str}(\Theta(P)).
\end{align*}
So $Ehr^{str}\circ \Theta=EHR$. \end{proof}

\begin{prop}
Let us consider the following map:
$$H:\left\{\begin{array}{rcl}
\WQSym&\longrightarrow&\K[X]\\
w\in \bfPW&\longrightarrow&H_{\max(w)}(X).
\end{array}\right.$$
This is a surjective Hopf algebra morphism, making the following diagram commuting:
$$\xymatrix{&\h_\QP\ar@{->>}[rd]^{EHR}\ar@{->>}[d]^{\Theta}\ar@{->>}[ld]_{\isoclass{}}&\\
\h_\qp\ar@{->>}[d]_{\theta}\ar@{->>}[rd]_(.3){ehr}&\h_\QP\ar@{->>}[ld]_(.3){\isoclass{}}|(.48)\hole \ar@{->>}[r]_{EHR^{str}}&\WQSym\ar@{->>}[ld]^{H}\\
\h_\qp\ar@{->>}[r]_{ehr^{str}}&\K[X]&}$$
\end{prop}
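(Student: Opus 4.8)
The plan is to treat the three assertions — that $H$ is an algebra morphism, that $H$ is a coalgebra morphism, and that the diagram commutes — separately, using throughout the numerical meaning of the Hilbert polynomials. The key preliminary observation is that for every packed word $w$ and every integer $n\geq 0$, $H(w)(n)=H_{\max(w)}(n)=\binom{n}{\max(w)}$ is exactly the number of words $w'\in[n]^{|w|}$ with $\mathrm{Pack}(w')=w$: specifying such a $w'$ amounts to choosing the $\max(w)$ distinct values it uses among $[n]$. Since two polynomials agreeing at all $n\geq 0$ coincide, every identity in $\K[X]$ below can be checked by evaluation at an arbitrary $n$.

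For the algebra morphism property I would fix $u\in\bfPW(k)$, $v\in\bfPW(l)$ and compute $H(uv)(n)$. Combining the observation above with the definition of the product of $\WQSym$, the disjoint union over the words $w$ occurring in $u\cdot v$ of the sets $\{w':\mathrm{Pack}(w')=w\}$ is precisely the set of $z\in[n]^{k+l}$ whose first $k$ letters pack to $u$ and whose last $l$ letters pack to $v$; here one uses that the relative order of a block of letters of $z$ equals that of the corresponding block of $\mathrm{Pack}(z)$. The map $z\mapsto(z_1\cdots z_k,\,z_{k+1}\cdots z_{k+l})$ is then a bijection onto $\{u':\mathrm{Pack}(u')=u\}\times\{v':\mathrm{Pack}(v')=v\}$, whence $H(uv)(n)=H(u)(n)\,H(v)(n)$ for all $n$, so $H(uv)=H(u)H(v)$; as $H(1)=H_0=1$, $H$ is an algebra morphism.

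For the coalgebra morphism property I would insert the coproduct of $\WQSym$ into $(H\otimes H)\Delta(w)$. Writing $m=\max(w)$, the left factor $w_{\{1,\dots,k\}}$ is packed with maximum $k$ and the right factor $\mathrm{Pack}(w_{\{k+1,\dots,m\}})$ has maximum $m-k$, so $H$ applied to these depends only on $k$ and $m$; under the identification $\K[X]\otimes\K[X]=\K[X,Y]$ this gives $\sum_{k=0}^m H_k(X)H_{m-k}(Y)$, equal to $H_m(X+Y)=\Delta(H(w))$ by the Vandermonde identity $\binom{X+Y}{m}=\sum_k\binom{X}{k}\binom{Y}{m-k}$ (again an identity of polynomials, since it holds at all integers). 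Compatibility with the counits is $H_{\max(w)}(0)=\delta_{\max(w),0}=\varepsilon(w)$. Thus $H$ is a bialgebra morphism between two graded connected bialgebras, hence a Hopf algebra morphism; it is surjective because its image contains every $H_k$, and these form a basis of $\K[X]$.

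It then remains to check the diagram. The two triangles through $H$ follow from proposition \ref{prop22}: since $w\in W^{str}_P(i)$ forces $\max(w)=i$, grouping by the maximum gives $H\circ EHR^{str}(P)=\sum_{w\in W^{str}_P}H_{\max(w)}(X)=\sum_i\sharp W^{str}_P(i)\,H_i(X)=ehr^{str}_P$, and identically $H\circ EHR(P)=ehr_P$; these match $ehr^{str}\circ\isoclass{}$ and $ehr\circ\isoclass{}$ because $ehr$ and $ehr^{str}$ factor through $\isoclass{}$. The square $\isoclass{}\circ\Theta=\theta\circ\isoclass{}$ holds because an isomorphism of quasi-posets carries $P$-compatible equivalences to compatible equivalences and commutes with the contraction $P\mapsto P/\sim$, so both sides equal $\sum_{\sim\triangleleft P}\isoclass{P/\sim}$. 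Finally, the faces $EHR^{str}\circ\Theta=EHR$ and $ehr^{str}\circ\theta=ehr$ are exactly propositions \ref{prop42} and \ref{prop39}. I expect the main obstacle to be the bijection in the algebra-morphism step: one must argue carefully that the condition $\mathrm{Pack}(z)\in u\cdot v$ on a word over $[n]$ is equivalent to the two independent block conditions, which is precisely what makes the product factorize.
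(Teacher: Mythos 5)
Your proof is correct, but it takes a genuinely different route from the paper on the main point, namely the fact that $H$ is a Hopf algebra morphism. The paper proves the two triangles first (exactly as you do, via $ehr_{\isoclass{P}}=\sum_i \sharp W_P(i)H_i(X)=\sum_{w\in W_P}H_{\max(w)}(X)$ from proposition \ref{prop22}, and likewise for the strict version), and then \emph{deduces} the morphism properties of $H$ by transfer of structure: given $w_1,w_2\in\WQSym$, it lifts them through the surjection $EHR$ of proposition \ref{prop41} to $x_1,x_2\in\h_\QP$ and writes $H(w_1w_2)=H\circ EHR(x_1x_2)=ehr(\isoclass{x_1x_2})=H(w_1)H(w_2)$, and similarly for $\Delta$, so that everything rests on $EHR$ and $ehr$ already being Hopf morphisms. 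You instead verify the morphism properties of $H$ directly and self-containedly: the evaluation identity $H(w)(n)=\binom{n}{\max(w)}=\sharp\{w'\in[n]^{|w|}\mid \mathrm{Pack}(w')=w\}$ makes multiplicativity a disjoint-union count (your worry about the block decomposition is unfounded: packing preserves the relative order, with equalities, of any block of letters, so $\mathrm{Pack}(z_1\cdots z_k)=\mathrm{Pack}(\mathrm{Pack}(z)_1\cdots \mathrm{Pack}(z)_k)$, which is exactly the required equivalence), and comultiplicativity reduces to Vandermonde's identity $H_m(X+Y)=\sum_k H_k(X)H_{m-k}(Y)$ after noting that $w_{\{1,\dots,k\}}$ is packed with maximum $k$ and $\mathrm{Pack}(w_{\{k+1,\dots,m\}})$ has maximum $m-k$. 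The paper's argument is shorter and exploits available machinery; yours is independent of the surjectivity of $EHR$, isolates Vandermonde as the structural reason $H$ is a coalgebra map, and makes explicit two points the paper leaves implicit, namely the surjectivity of $H$ (image contains the basis $(H_k)$) and the square $\isoclass{}\circ\Theta=\theta\circ\isoclass{}$, which indeed follows from the compatibility of $\delta$ with $\isoclass{}$. One cosmetic remark: gradedness is not needed to pass from bialgebra to Hopf algebra morphism, since any bialgebra morphism between Hopf algebras automatically commutes with the antipodes.
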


\begin{proof}
Let $P \in \QP$. Then:
\begin{align*}
ehr(\isoclass{P})&=\sharp W_P(k) H_k(X)=\sum_{w\in W_P} H_{\max(w)}(X)=\sum_{w\in W_P} H(w)=H\circ EHR(P).
\end{align*}
So $ehr\circ \isoclass{}=H\circ EHR$. Similarly, $ehr^{str}\circ \isoclass{}=H\circ EHR^{str}$.\\

Let us prove that $H$ is a Hopf algebra morphism. Let $w_1,w_2\in \WQSym$. There exist $x_1,x_2\in \h_\QP$,
such that $w_1=EHR(x_1)$ and $w_2=EHR(x_2)$. Then:
\begin{align*}
H(w_1w_2)&=H(EHR(x_1)EHR(x_2))\\
&=H\circ EHR(x_1x_2)\\
&=ehr(\isoclass{x_1x_2})\\
&=ehr(\isoclass{x_1})ehr(\isoclass{x_2})\\
&=H\circ EHR(x_1)H\circ EHR(x_2)\\
&=H(w_1)H(w_2).
\end{align*}
Let $w\in \WQSym$. There exists $x\in \h_\QP$ such that $w=EHR(x)$.
\begin{align*}
\Delta \circ H(w)&=\Delta \circ H\circ EHR(x)\\
&=(H\otimes H)\circ (EHR \otimes EHR)\circ \Delta(x)\\
&=(H\otimes H)\circ \Delta \circ EHR(x)\\
&=(H\otimes H)\circ \Delta(w).
\end{align*}
So $H$ is a Hopf algebra morphism. \end{proof}

\subsection{The non-commutative duality principle}

\begin{lemma}
The following map is an involution and a Hopf algebra automorphism:
\begin{align*}
\Phi_{-1}&:\left\{\begin{array}{rcl}
\WQSym&\longrightarrow&\WQSym\\
w&\longrightarrow&\displaystyle(-1)^{\max(w)} \sum_{\mbox{\scriptsize $\sigma:[\max(w)]\twoheadrightarrow [l]$, non-decreasing}} \sigma\circ w.
\end{array}\right.
\end{align*}
\end{lemma}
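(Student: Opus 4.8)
The plan is to work in the polynomial realization of $\WQSym$ recalled in the Remark after Proposition \ref{prop41}, where a packed word $w$ of length $n$ with $\max(w)=m$ is realized as $P_w(X)=\sum_{i_1<\dots<i_m} x_{i_{w(1)}}\cdots x_{i_{w(n)}}$, the sum being over \emph{strictly} increasing choices of letters. A non-decreasing surjection $\sigma:[m]\twoheadrightarrow[l]$ merges consecutive values, and summing $P_{\sigma\circ w}$ over all $\sigma$ turns this strict sum into the \emph{weakly} increasing sum $\sum_{y_1\le\cdots\le y_m\in X} y_{w(1)}\cdots y_{w(n)}$. Thus $\Phi_{-1}(w)$ is realized by $(-1)^{\max(w)}$ times the weakly increasing sum; morally $\Phi_{-1}$ is the opposite-alphabet substitution $X\mapsto -X$, which both explains the sign and predicts that it is a Hopf morphism and an involution. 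I will first prove the involution directly, then the coalgebra- and algebra-morphism properties, and finally deduce that $\Phi_{-1}$ is a Hopf algebra automorphism.

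For the involution I expand $\Phi_{-1}^2(w)$ directly. With $m=\max(w)$ and composing the defining sums,
\[
\Phi_{-1}^2(w)=(-1)^m\sum_{\rho}\Big(\sum_{l}(-1)^{l}\,N_l(\rho)\Big)\,\rho\circ w,
\]
where $\rho$ runs over non-decreasing surjections out of $[m]$ and $N_l(\rho)$ counts the factorizations $\rho=\tau\circ\sigma$ with $\sigma:[m]\twoheadrightarrow[l]$ and $\tau$ a non-decreasing surjection. Each factorization refines the consecutive blocks of $\rho$: if $\rho$ has blocks of sizes $b_1,\dots,b_k$, a factorization splits the $i$-th block into $j\ge1$ consecutive subblocks, so the inner sum factorizes as $\prod_{i=1}^{k}\big(\sum_{j\ge1}(-1)^{j}\binom{b_i-1}{j-1}\big)$. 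Since $\sum_{j\ge1}(-1)^{j}\binom{b-1}{j-1}=-(1-1)^{b-1}=-[b=1]$, the bracket vanishes unless every block of $\rho$ is a singleton, i.e. $\rho=\mathrm{Id}_{[m]}$, where it equals $(-1)^m$. Hence $\Phi_{-1}^2(w)=(-1)^m(-1)^m\,w=w$, and in particular $\Phi_{-1}$ is a bijection.

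For the coalgebra morphism I use that the coproduct of $\WQSym$ splits a packed word at a threshold on its values. Given $\sigma:[m]\twoheadrightarrow[m']$ and a threshold $j\in\{0,\dots,m'\}$ for $\sigma\circ w$, the data is equivalent to a threshold $i\in\{0,\dots,m\}$ for $w$ together with the restrictions $\sigma_1:[i]\twoheadrightarrow[j]$ and $\sigma_2:[m-i]\twoheadrightarrow[m'-j]$ of $\sigma$ to the bottom and top value-segments; moreover $(-1)^{i}(-1)^{m-i}=(-1)^{m}$, so this bijection identifies $\Delta\circ\Phi_{-1}(w)$ with $(\Phi_{-1}\otimes\Phi_{-1})\circ\Delta(w)$ term by term, with no cancellation. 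The algebra-morphism property is the delicate point. Comparing coefficients of a monomial $c=c^{(1)}c^{(2)}$ in the realization, $\Phi_{-1}(u)\,\Phi_{-1}(v)$ fills the letters of $u$ and of $v$ from two \emph{independent} weakly increasing pools with total sign $(-1)^{\max(u)+\max(v)}$, whereas $\Phi_{-1}(u\cdot v)$ uses a single \emph{shared} pool with sign $(-1)^{\max(w)}$ summed over the words $w$ of $u\cdot v$ refining $\mathrm{Pack}(c)$. Reconciling the shared pool with the two independent pools is again an inclusion–exclusion over how shared value-blocks are distributed between the $u$-part and the $v$-part, and the per-word signs produce exactly the $(1-1)^{b-1}$-type cancellations needed to match $(-1)^{\max(u)+\max(v)}[c^{(1)}\text{ compatible with }u]\,[c^{(2)}\text{ compatible with }v]$. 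This sign reconciliation is the main obstacle; conceptually it is precisely the assertion that $X\mapsto -X$ is multiplicative.

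Finally, a bijective bialgebra endomorphism of a Hopf algebra automatically commutes with the antipode, so once $\Phi_{-1}$ is a bijective bialgebra morphism it is a Hopf algebra automorphism; combined with the second paragraph this makes it an involutive automorphism, as claimed. A cleaner structural alternative for the morphism properties is to establish once and for all that the substitution $X\mapsto -X$ is a well-defined Hopf endomorphism of the polynomial realization commuting with the ordered alphabet sum $X=Y+Z$ (every letter of $Y$ below every letter of $Z$) that realizes $\Delta$; then $\Phi_{-1}$ is that endomorphism and both compatibilities are immediate, the entire content of the algebra-morphism obstacle being packaged into the well-definedness of the substitution.
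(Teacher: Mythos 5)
Your involution computation is correct and complete, and it is a genuinely different route from the paper's: grouping $\Phi_{-1}^2(w)$ by $\rho=\tau\circ\sigma$ and evaluating $\sum_{j\geq 1}(-1)^j\binom{b-1}{j-1}=-(1-1)^{b-1}$ blockwise is a valid direct argument (and the coefficient comparison is legitimate, since $w$ is surjective onto $[\max(w)]$, so $\sigma\mapsto \sigma\circ w$ is injective). Your threshold bijection for the compatibility with $\Delta$ is also sound. The genuine gap is the algebra-morphism property, and you flag it yourself: you assert that "the per-word signs produce exactly the $(1-1)^{b-1}$-type cancellations needed" without exhibiting them. Concretely, in the polynomial realization the claim reduces to the following identity: for every word $W=W_1W_2$ over the alphabet such that $Pack(W_1)$ and $Pack(W_2)$ are non-decreasing degenerations of $u$ and $v$, one must have $\sum_w (-1)^{\max(w)}=(-1)^{\max(u)+\max(v)}$, where $w$ runs over the terms of $u\cdot v$ admitting $Pack(W)$ as a non-decreasing degeneration. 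Unlike in your involution argument, these $w$ are not parametrized by independent refinements of consecutive blocks: inside each value-block of $Pack(W)$ one must choose how the $u$-values and the $v$-values split and interleave into values of $w$, and showing that the signed sum over these choices collapses is genuinely more intricate; it is not done in your proposal. The suggested "cleaner structural alternative" does not repair this: as you concede, the well-definedness (in particular the multiplicativity) of the substitution $X\mapsto -X$ is exactly the assertion at stake, since $-X$ is not an alphabet, so invoking it is circular without an independent argument.

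For comparison, the paper sidesteps the multiplicativity problem entirely. It transports, via the surjections $EHR^{str}$ and $H$ ($ehr^{str}$), the cointeraction of $(\h_\QP,m,\Delta)$ with $(\h_\qp,m,\delta)$ to a coaction $\rho=(Id\otimes H)\circ\delta$ of $(\K[X],m,\delta)$ on $(\WQSym,m,\Delta)$, and sets $\Phi_\lambda=Id\leftarrow ev_\lambda$. Proposition \ref{prop4} then yields for free that each $\Phi_\lambda$ is a Hopf algebra endomorphism and that $\Phi_\lambda\circ\Phi_\mu=\Phi_{\lambda\mu}$, so $\Phi_{-1}$ is an involutive automorphism; the only computation left is $H_p(-1)=(-1)^p$ together with the observation that the block maxima along a non-decreasing surjection sum to $\max(w)$, which gives the stated formula. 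So to complete your write-up you should either carry out the interleaving inclusion--exclusion for the product in full, or obtain multiplicativity structurally as the paper does; what you proved directly (involutivity and the coproduct compatibility) is precisely the part the paper gets automatically from the character action.
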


\begin{proof} Using the surjective morphisms $EHR^{str}$ and $ehr^{str}$, taking the quotients of the cointeracting bialgebras $(\h_\QP,m,\Delta)$
and $(\h_\qp,m,\delta)$, we obtain that $(\WQSym,m,\Delta)$ and $(\K[X],m,\delta)$ are cointeracting bialgebras, with the coaction defined by:
\begin{align*}
\rho&=(Id \otimes H)\circ \delta:\WQSym\longrightarrow \WQSym \otimes \K[X]
\end{align*}
For any packed word $w$:
$$\rho(w)=\sum_{\mbox{\scriptsize $\sigma:[k]\twoheadrightarrow [l]$, non-decreasing}}
\sigma \circ w\otimes H_{\max(Pack(w_{\mid (\sigma\circ w)^{-1}(1)}))}(X) \ldots H_{\max(Pack(w_{\mid (\sigma\circ w)^{-1}(l)}))}(X) .$$

Using proposition \ref{prop4}, for any $\lambda \in \K$, considering the character:
\begin{align*}
ev_\lambda&:\left\{\begin{array}{rcl}
\K[X]&\longrightarrow&\K\\
P&\longrightarrow&P(\lambda),
\end{array}\right. \end{align*}
we obtain an endomorphism $\Phi_\lambda$ of $(\WQSym,m,\Delta)$ defined by $\Phi_\lambda=Id \leftarrow ev_\lambda$.
 if $\lambda \neq 0$, $\Phi_\lambda$ is invertible, of inverse $\Phi_{\lambda^{-1}}$. For any packed word $w$, denoting by $k$ its maximum:
\begin{align*}
\Phi_\lambda(w)&=\sum_{\mbox{\scriptsize $\sigma:[k]\twoheadrightarrow [l]$, non-decreasing}}
H_{\max(Pack(w_{\mid (\sigma\circ w)^{-1}(1)}))}(\lambda) \ldots H_{\max(Pack(w_{\mid (\sigma\circ w)^{-1}(l)}))}(\lambda) \sigma \circ w.
\end{align*}
In particular, for $\lambda=-1$, for any $p\in \N$:
$$H_p(-1)=\frac{(-1)(-2)\ldots (-k)}{k!}=(-1)^k.$$
Hence:
\begin{align*}
\Phi_{-1}(w)&=\sum_{\mbox{\scriptsize $\sigma:[k]\twoheadrightarrow [l]$, non-decreasing}}
(-1)^{\max(Pack(w_{\mid (\sigma\circ w)^{-1}(1)}))+\ldots+\max(Pack(w_{\mid (\sigma\circ w)^{-1}(l)}))} \sigma \circ w\\
&=(-1)^k \sum_{\mbox{\scriptsize $\sigma:[k]\twoheadrightarrow [l]$, non-decreasing}} \sigma\circ w.
\end{align*}
Indeed, if $x\in (\sigma\circ w)^{-1}(p)$ and $y\in (\sigma \circ w)^{-1}(q)$, with $p<q$, then $\sigma \circ w(x)<\sigma \circ x(y)$;
as $\sigma$ is non-decreasing, $x<y$. So there exists $n_1<n_2<\ldots<n_l=k$ such that for all $p$,
the values taken by $w$ on $(\sigma \circ w)^{-1}(p)$ are $n_{p-1}+1,\ldots, n_p$.
Hence, the values taken by $Pack(w_{\mid (\sigma\circ w)^{-1}(p)})$ are $1,\ldots,n_p-n_{p-1}$, so:
$$\max(Pack(w_{\mid (\sigma\circ w)^{-1}(1)}))+\ldots+\max(Pack(w_{\mid (\sigma\circ w)^{-1}(l)}))
=n_1+n_2-n_1+\ldots+n_l-n_{l-1}=n_l=k.$$
In particular, $\Phi_{-1}$ is an involution and a Hopf algebra automorphism of $(\WQSym,m,\Delta)$. \end{proof}

\begin{theo}[Non commutative duality principle] \label{theo45}
For any quasi-poset $P\in \QP$:
\begin{align*}
EHR(P)&=(-1)^{cl(P)} \Phi_{-1}\circ ERH^{str}(P),&
EHR^{str}(P)&=(-1)^{cl(P)} \Phi_{-1}\circ ERH(P).
\end{align*}
\end{theo}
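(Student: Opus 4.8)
The plan is to prove the single identity $EHR(P)=(-1)^{cl(P)}\Phi_{-1}\bigl(EHR^{str}(P)\bigr)$ for every quasi-poset $P$; the second identity then follows formally. Indeed, since $\Phi_{-1}$ is an involution, applying $\Phi_{-1}$ to both sides of the first identity gives $\Phi_{-1}(EHR(P))=(-1)^{cl(P)}EHR^{str}(P)$, and multiplying by $(-1)^{cl(P)}$ yields $EHR^{str}(P)=(-1)^{cl(P)}\Phi_{-1}(EHR(P))$. So everything reduces to the first equation.

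The heart of the argument is a compatibility between the two cointeractions through the pair $(EHR^{str},ehr^{str})$. Write $\rho=(Id\otimes\isoclass{})\circ\delta$ for the coaction on $\h_\QP$ and $\rho_{\WQSym}=(Id\otimes H)\circ\delta$ for the coaction on $\WQSym$ introduced in the lemma defining $\Phi_{-1}$. First I would establish
$$\rho_{\WQSym}\circ EHR^{str}=(EHR^{str}\otimes ehr^{str})\circ\rho.\qquad(\star)$$
This comes from unfolding the quotient construction: $\WQSym$ is the quotient of $(\h_\QP,m,\Delta,\delta)$ by $\ker EHR^{str}$, so $EHR^{str}$ intertwines $\delta$ with $\delta_{\WQSym}$, whence $\delta_{\WQSym}\circ EHR^{str}=(EHR^{str}\otimes EHR^{str})\circ\delta$. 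Applying $Id\otimes H$ and using the relation $H\circ EHR^{str}=ehr^{str}\circ\isoclass{}$ proved just above, the right tensor factor $H\circ EHR^{str}$ becomes $ehr^{str}\circ\isoclass{}$, and since $\rho=(Id\otimes\isoclass{})\circ\delta$ this is exactly $(\star)$.

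Granting $(\star)$, I would compute the character $\kappa:=ev_{-1}\circ ehr^{str}\in M_\qp$, i.e. $\kappa(\isoclass{Q})=ehr^{str}_Q(-1)$. By the duality principle (theorem \ref{theo38}) one has $ehr^{str}_Q(-1)=(-1)^{cl(Q)}ehr_Q(1)$, and $ehr_Q(1)=\sharp L_Q(1)=1$ since the unique constant map $[n]\to[1]$ always lies in $L_Q(1)$; hence $\kappa(\isoclass{Q})=(-1)^{cl(Q)}$. Because $\Phi_{-1}=Id\leftarrow ev_{-1}$ on $\WQSym$, applying $Id\otimes ev_{-1}$ to $(\star)$ gives
\begin{align*}
\Phi_{-1}(EHR^{str}(P))&=(EHR^{str}\otimes\kappa)\circ\rho(P)=\sum_{\sim\triangleleft P}\kappa(\isoclass{P|\sim})\,EHR^{str}(P/\sim)\\
&=(-1)^{cl(P)}\sum_{\sim\triangleleft P}EHR^{str}(P/\sim),
\end{align*}
where I used $cl(P|\sim)=cl(P)$ for every $\sim\triangleleft P$. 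Finally $\sum_{\sim\triangleleft P}EHR^{str}(P/\sim)=EHR^{str}(\Theta(P))=EHR(P)$ by proposition \ref{prop42}, so $\Phi_{-1}(EHR^{str}(P))=(-1)^{cl(P)}EHR(P)$, which is the desired identity.

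The main obstacle is step $(\star)$: one must make sure the internal coproduct $\delta$ genuinely descends to $\WQSym$ and that $EHR^{str}$ is a morphism for it, i.e. that $\ker EHR^{str}$ is a $\delta$-coideal and $\rho_{\WQSym}$ is well defined. This is precisely the fact already used to assert the $(\WQSym,\K[X])$-cointeraction in the preceding lemma, so I would simply cite it. If a fully self-contained verification were wanted, $(\star)$ could instead be checked by comparing the explicit formula for $\rho_{\WQSym}(w)$ in terms of non-decreasing surjections with the sum $\sum_{\sim\triangleleft P}EHR^{str}(P/\sim)\otimes ehr^{str}_{P|\sim}$, but that is the routine combinatorial computation I would avoid here.
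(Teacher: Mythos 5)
Your proposal is correct and is essentially the paper's own argument in different packaging: the paper encodes the sign by the involution $\Psi(P)=(-1)^{cl(P)}P$ and proves $EHR\circ\Psi=\Phi_{-1}\circ EHR^{str}$ through exactly your chain --- the intertwining of the coactions by $(EHR^{str},ehr^{str})$ (your $(\star)$, which is the quotient cointeraction already asserted in the lemma defining $\Phi_{-1}$), the character identity $ev_{-1}\circ ehr^{str}(P)=(-1)^{cl(P)}$ obtained from the commutative duality principle, and the identification $\sum_{\sim\triangleleft P}EHR^{str}(P/\sim)=EHR^{str}\circ\Theta(P)=EHR(P)$ from proposition \ref{prop42}. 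Your reduction of the second identity to the first by involutivity of $\Phi_{-1}$ likewise matches the paper's closing step, and there is no circularity in citing the later $\delta$-compatibility theorem, whose proof is an independent combinatorial bijection.
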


\begin{proof} We shall use the following involution and Hopf algebra automorphism:
\begin{align*}
\Psi&:\left\{\begin{array}{rcl}
\h_\QP&\longrightarrow&\h_\QP\\
p\in \QP&\longrightarrow&(-1)^{cl(P)}P.
\end{array}\right.
\end{align*}
Recall that the character $\iota$ of $\h_\QP$ sends any $P\in \QP$ to $1$. By the duality principle:
\begin{align*}
\iota \circ \Psi(P)&=(-1)^{cl(P)}=(-1)^{cl(P)}ehr(P)(1)=ehr^{str}(-1)=ev_{-1}\circ ehr^{str}(P).
\end{align*}
So $\iota \circ \Psi=ev_{-1}\circ ehr^{str}$.\\

Let $P\in \QP$. Recalling that if $\sim \triangleleft P$, $cl(P|\sim)=cl(P)$:
\begin{align*}
\delta \circ \Psi(P)&=(-1)^{cl(P)}\sum_{\sim\triangleleft P} P/\sim \otimes P|\sim=\sum_{\sim\triangleleft P} P/\sim \otimes (-1)^{cl(P|\sim)}P|\sim
=(Id \otimes \Psi)\circ \delta(P).
\end{align*}
So $\delta \circ \Psi=(Id \otimes \Psi)\circ \delta$. Hence, for any $x \in \h_\QP$:
\begin{align*}
EHR\circ \Psi(x)&=EHR^{str}\circ (Id \leftarrow \iota)\circ \Psi(x)\\
&=EHR^{str}(\Psi(x)_0) \iota \circ \Psi(x)_1\\
&=EHR^{str}(x_0)\iota \circ \Psi(x_1)\\
&=EHR^{str}(x_0) ev_{-1} \circ ehr^{str}(x_1)\\
&=EHR^{str}(x^{(1)}) ev_{-1}\circ EHR^{str}(x^{(2)})\\
&=EHR\leftarrow ev_{-1}(x)\\
&=(Id\leftarrow ev_{-1})\circ EHR^{str}(x)\\
&=\Phi_{-1}\circ EHR^{str}(x),
\end{align*}
where we denote $\delta(x)=x^{(1)}\otimes x^{(2)}$ and $\rho(x)=x_0\otimes x_1$. As $\Phi_{-1}$ and $\Psi$ are involutions,
$EHR^{str}\circ \Psi=\Phi_{-1}\circ EHR$. \end{proof}\\

In $E_{\K[X]\rightarrow \K[X]}$, putting $\phi_\lambda=Id\leftarrow ev_\lambda$, 
for any $P\in \K[X]$, $\phi_\lambda(P)=P(\lambda X)$. Moreover, as $H$ is compatible with the coactions:
\begin{align*}
H\circ \Phi_\lambda&=H\circ (Id \leftarrow ev_\lambda)=H\leftarrow ev_\lambda=(Id \leftarrow ev_\lambda)\circ H=\phi_\lambda \circ H,
\end{align*}
so:
\begin{align*}
ehr\circ \Psi&=H\circ EHR \circ \Psi=H\circ \Phi_{-1} \circ EHR^{str}=\phi_{-1} \circ H\circ EHR^{str}=\phi_{-1} \circ ehr^{str}.
\end{align*}
In other words, for any $P\in \QP$, $(-1)^{cl(P)}ehr_P(X)=ehr^{str}_P(-X)$: we recover the duality principle.\\

We obtain the commutative diagram of Hopf algebra morphisms:
$$\xymatrix{\h_\QP\ar@{^(->>}[d]_\Theta \ar@{->>}[rd]^(.6){EHR}\ar@{-->>}@/^1pc/[rrrrdd]_(.55){\isoclass{}}&&&&\\
\h_\QP\ar@{^(->>}[d]_\Psi\ar@{->>}[r]^(.4){EHR^{str}}\ar@{-->>}@/^1pc/[rrrrdd]_(.55){\isoclass{}}|(.32)\hole&\WQSym\ar@{^(->>}[d]_{\Phi_{-1}}
\ar@{-->>}[rrrrdd]^H|(.7)\hole&&&\\
\h_\QP\ar@{-->>}[r]^(.4){EHR}\ar@{-->>}@/^1pc/[rrrrdd]_(.55){\isoclass{}}&\WQSym\ar@{-->>}[rrrrdd]^H|(.7)\hole
&&&\h_\qp\ar@{^(->>}[d]_\theta \ar@{->>}[rd]^{ehr}&\\
&&&&\h_\qp\ar@{^(->>}[d]_\psi\ar@{->>}[r]_{ehr^{str}}&\K[X]\ar@{^(->>}[d]_{\phi_{-1}}\\
&&&&\h_\qp\ar@{->>}[r]_{ehr}&\K[X]}$$

\subsection{Compatibility with the other product and coproduct}

\begin{theo}
We define a second coproduct $\delta$ on $\WQSym$: 
$$\forall w\in \bfPW,\:\delta(w)=\sum_{(\sigma,\tau)\in A_w} (\sigma \circ w)\otimes (\tau \circ w),$$
where $A_w$ is the set of pairs of packed words $(\sigma,\tau)$ of length $\max(w)$ such that:
\begin{itemize}
\item $\sigma$ is non-decreasing.
\item If $1\leq i<j\leq \max(w)$ and $\sigma(i)=\sigma(j)$, then $\tau(i)<\tau(j)$.
\end{itemize}
Then $(\WQSym,m,\delta)$ is a bialgebra and $EHR^{str}$ is a bialgebra morphism from $(\h_\QP,m,\delta)$ to $(\WQSym,m,\delta)$.
\end{theo}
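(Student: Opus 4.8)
The plan is to follow the pattern of the previous proposition on $H$: I would prove the single intertwining identity
$$\delta\circ EHR^{str}=(EHR^{str}\otimes EHR^{str})\circ\delta \qquad\text{on }\h_\QP,$$
and then use that $EHR^{str}:\h_\QP\twoheadrightarrow\WQSym$ is a surjective algebra morphism (proposition \ref{prop41}) to transport all the bialgebra axioms from the bialgebra $(\h_\QP,m,\delta)$ to $(\WQSym,m,\delta)$. For the counit I would take $\varepsilon_\delta(w)=1$ if $w=1\cdots 1$ is constant and $\varepsilon_\delta(w)=0$ otherwise; since the constant word $1^n$ lies in $W^{str}_P$ exactly when $P$ is discrete, one checks immediately that $\varepsilon_\delta\circ EHR^{str}=\varepsilon'$, and $\varepsilon_\delta$ is multiplicative because the word $1^{k+l}$ occurs (with coefficient one) in a product $u\cdot v$ only when $u=1^k$ and $v=1^l$.

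The core of the argument is the intertwining identity, which I would prove on a basis element $P\in\QP(n)$ by exhibiting an explicit bijection between the index sets of
$$\delta(EHR^{str}(P))=\sum_{w\in W^{str}_P}\ \sum_{(\sigma,\tau)\in A_w}(\sigma\circ w)\otimes(\tau\circ w)$$
and
$$(EHR^{str}\otimes EHR^{str})(\delta(P))=\sum_{\sim\triangleleft P}\ \sum_{u\in W^{str}_{P/\sim}}\ \sum_{v\in W^{str}_{P|\sim}}u\otimes v,$$
sending a triple $(w,\sigma,\tau)$ to $(\sim,\sigma\circ w,\tau\circ w)$. Given $(w,\sigma,\tau)$, I put $u=\sigma\circ w$, $v=\tau\circ w$ and define $\sim$ by: $x\sim y$ if $u(x)=u(y)$ and $x,y$ lie in the same connected component of $P_{\mid u^{-1}(u(x))}$. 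Because $\sigma$ is non-decreasing its fibres are intervals, so on each such fibre $\tau$ is strictly increasing; this makes $v$ an order-preserving injective relabelling of $w$ inside every $\sim$-class and, combined with $w\in L^{str}_P$, yields $v\in W^{str}_{P|\sim}$, while the monotonicity of $\sigma$ yields $u\in W^{str}_{P/\sim}$. The reverse map, given $(\sim,u,v)$, lets $w$ be the packed word attached to the total preorder ordering $[n]$ first by $u$ and then, within each $u$-fibre, by $v$; then $\sigma$ and $\tau$ are the unique packed words with $u=\sigma\circ w$ and $v=\tau\circ w$, and the properties $\sigma$ non-decreasing, $(\sigma,\tau)\in A_w$ and $w\in W^{str}_P$ follow from $\sim\triangleleft P$, $u\in W^{str}_{P/\sim}$ and $v\in W^{str}_{P|\sim}$. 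The two maps are visibly mutually inverse.

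With the intertwining identity in hand, the bialgebra structure of $(\WQSym,m,\delta)$ becomes purely formal. For coassociativity, given $w\in\WQSym$ I choose $x$ with $EHR^{str}(x)=w$; applying the identity twice rewrites both $(\delta\otimes Id)\delta(w)$ and $(Id\otimes\delta)\delta(w)$ as $(EHR^{str})^{\otimes 3}$ applied to $(\delta\otimes Id)\delta(x)$ and to $(Id\otimes\delta)\delta(x)$ respectively, and these agree because $\delta$ is coassociative on $\h_\QP$. Multiplicativity of $\delta$ and the counit axioms for $\varepsilon_\delta$ follow in the same way, using that $EHR^{str}$ and $EHR^{str}\otimes EHR^{str}$ are algebra morphisms and that $\varepsilon_\delta\circ EHR^{str}=\varepsilon'$; and the identity itself is precisely the assertion that $EHR^{str}$ is a coalgebra, hence bialgebra, morphism for $\delta$. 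The one genuinely delicate point, which I expect to be the main obstacle, is the bijection of the second paragraph: one must check carefully that the relation $\sim$ built from $(w,\sigma,\tau)$ is $P$-compatible, namely that its classes are $P$-connected and that $\sim_{P/\sim}=\sim$, and that the strictness conditions defining $W^{str}_{P/\sim}$ and $W^{str}_{P|\sim}$ hold. Both rest on the same connected-component arguments already deployed in the proofs of coassociativity of $\delta$ and of the cointeraction, and transcribing them here should be routine but technical.
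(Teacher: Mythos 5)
Your proposal is correct and follows essentially the same route as the paper: the same bijection between triples $(w,\sigma,\tau)$ with $w\in W^{str}_P$, $(\sigma,\tau)\in A_w$ and triples $(\sim,u,v)$ with $\sim\triangleleft P$, $u\in W^{str}_{P/\sim}$, $v\in W^{str}_{P|\sim}$ (these are the paper's maps $F$ and $G$, with the same lexicographic reconstruction of $w$ from $(u,v)$ and the same connected-component definition of $\sim$ from $\sigma\circ w$), followed by transport of structure along the surjection $EHR^{str}$, which the paper phrases as $Ker(EHR^{str})$ being a biideal of $(\h_\QP,m,\delta)$. Your explicit verification of the counit is the only addition; the paper records the same formula for it in a remark after the proof.
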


\begin{proof} Let us prove that $\delta \circ EHR^{str}=(EHR^{str}\otimes EHR^{str})\circ \delta$. Let $P\in \QP$. We consider the two following sets:
\begin{align*}
A&=\{(\sim,w_1,w_2)\mid \sim\triangleleft P, w_1\in W_{P/\sim}^{str}, w_2 \in W_{P|\sim}^{str}\},\\
B&=\{(w,\sigma,\tau)\mid w\in W_P^{str}, (\sigma,\tau)\in A_w\}.
\end{align*}

Let $(\sim,w_1,w_2)\in A$. We put $I_p=w_1^{-1}(p)$ for all $1\leq p\leq \max(w_1)$, and $w_2^{(p)}$ the standardization of the restriction
of $w_2$ to $I_p$. We define $w$ by:
$$w(i)=w_2^{(p)}(i)+\max w^{(2)}_1+\ldots+\max w^{(2)}_{p-1}\mbox{ if }i\in I_p.$$
Let us prove that $w\in W_P^{str}$. If $x\leq_P y$, then $x \leq_{P/\sim} y$, so $p=w_1(x)\leq w_2(y)=q$.
\begin{itemize}
\item If $p<q$, then $w(x)<w(y)$.
\item If $p=q$, then $w_1(x)=w_2(y)$ and, as $x\leq_P y$, $x$ and $y$ are in the same connected component of $w^{-1}(p)$.
So $x\sim_{w_1} y$, that is to say $x\sim y$ as $w_1 \in W_{P/\sim}^{str}$,  and $x\leq_{P|\sim} y$, which implies that $w_2(x)\leq w_2(y)$
and finally $w(x)\leq w(y)$.
\end{itemize}
Let us assume that moreover $w(x)=w(y)$. Then $p=q$ and necessarily, $w_2(x)=w_2(y)$. As $w_2 \in W_{P|\sim}^{str}$,
$x \sim_{P|\sim} y$, so $x\sim_P y$.

If $w(x)=w(y)$, then by definition of $w$, $w_1(x)=w_1(y)$. So there exists a unique $\sigma :[\max(w)]\longrightarrow [\max(w_1)]$,
such that $w_1=\sigma \circ w$. If $w(x)<w(y)$, then, by construction of $w$, $w_1(x)\leq w_1(y)$: $\sigma$ is non-decreasing.

There exists a unique $\tau:[\max(w)]\longrightarrow [\max(w_2)]$, such that $w_2=\tau \circ \sigma$. As $Pack(w_{\mid I_p})=Pack((w_2)_{\mid I_p})$
for all $p$, $\tau$ is increasing on $I_p$.

To any $(\sim,w_1,w_2)\in A$, we associate $(w,\sigma,\tau)=F(\sim,w_1,w_2)\in B$, such that $w_1=\sigma \circ \tau$, $w_2=\tau \circ \sigma$,
and $\sim=\sim_{\sigma \circ \tau}$. This defines a map $F:A\longrightarrow B$.\\

Let $(w,\sigma,\tau) \in B$. We put $G(w,\sigma,\tau)=(\sim,\sigma,\tau)=(\sim_{\sigma \circ w}, \sigma\circ w,\tau \circ w)$. 
If $x\leq_P y$, then $w(x)\leq w(y)$, so $w_1(x)=\sigma \circ w(x)\leq \sigma \circ w(y)=w_1(y)$. If moreover $w_1(x)=w_1(y)$,
then as $x\leq_P y$, $x$ and $y$ are in the same connected component of $w_1^{-1}(w_1(x))$, so $x\sim_{w_1} y$: $w_1 \in W_{P/\sim}^{str}$.

If $x\leq_{P|\sim} y$, then $x \sim_{w_1} y$ and $x\leq_P y$, so $w_1(x)=w_1(y)$ and $w_(x)\leq w(y)$. By hypothesis on $\tau$,
$\tau \circ w(x) \leq \tau\circ w(y)$, so $w_2(x)\leq w_2(y)$. If moreover $w_2(x)=w_2(y)$, by hypothesis on $\tau$,
$w(x)=w(y)$. As $w\in W_P^{str}$, $x\sim_P y$, so $x\sim_{P|\sim} y$: $w_2\in W_{P|\sim}^{str}$. 

We defined in this way a map $G:B\longrightarrow A$. If $(\sim,w_1,w_2)\in A$:
\begin{align*}
G\circ F(\sim,w_1,w_2)&=G(w,\sigma,\tau)=(\sim_{\sigma \circ w},\sigma \circ w,\tau \circ w)=(\sim_{w_1},w_1,w_2)=(\sim,w_1,w_2).
\end{align*}
So $G\circ F=Id_A$. If $(w,\sigma,\tau) \in B$:
\begin{align*}
F\circ G(w,\sigma,\tau)&=F(\sim_{\sigma \circ w},\sigma \circ w,\tau \circ w)=(w,\sigma,\tau).
\end{align*}
So $F\circ G=Id_B$: $F$ and $G$ are inverse bijections. \\

We obtain:
\begin{align*}
(EHR^{str}\otimes EHR^{str})\circ \delta(P)&=\sum_{(\sim,w_1,w_2)\in A} w_1\otimes w_2\\
&=\sum_{(w,\sigma,\tau)\in B} \sigma\circ w\otimes \tau \circ w\\
&=\sum_{w\in W_P^{str}} \delta(w)\\
&=\delta \circ EHR^{str}(P).
\end{align*}
So $EHR^{str}$ is compatible with $\delta$.\\

As $EHR^{str}$ is compatible with the product $m$ and the coproduct $\delta$, $Ker(EHR^{str})$ is a biideal of $(\h_\QP,m,\delta)$,
and $(\WQSym,m,\delta)$ is identified with the quotient $\h_\QP/Ker(EHR^{str})$, so is a bialgebra. \end{proof}\\

\textbf{Examples}.
\begin{align*}
\delta(11)&=(11)\otimes (11),\\
\delta(12)&=(12)\otimes ((11)+(12)+(21))+(11)\otimes (12),\\
\delta(21)&=(21)\otimes ((11)+(12)+(21))+(11)\otimes (21).
\end{align*}

This coproduct $\delta$ on $\WQSym$ is the internal coproduct of \cite{NovelliThibon2}, dual to the product of the Solomon-Tits algebra.\\

\textbf{Remarks.} \begin{enumerate}
\item The counit of $(\WQSym,m,\delta)$ is given by:
\begin{align*}
\varepsilon_B(w)&=\begin{cases}
1\mbox{ if }w=(1\ldots 1),\\
0\mbox{ otherwise}.
\end{cases} \end{align*}
\item There is no coproduct $\delta'$ on $\WQSym$ such that $(EHR\otimes EHR)\circ \delta=\delta'\circ EHR$. Indeed, if $\delta'$ is 
any coproduct on $\WQSym$, for $x=\tddeux{$1$}{$2$}+\tddeux{$2$}{$1$}-\tdun{$1$}\tdun{$2$}-\tdun{$1,2$}\hspace{2mm}$:
$$\delta'\circ EHR(x)=\delta'(0)=0,$$
but:
\begin{align*}
&(EHR\otimes EHR)\circ \delta(x)\\
&=(EHR\otimes EHR)((\tddeux{$1$}{$2$}+\tddeux{$2$}{$1$}-\tdun{$1$}\tdun{$2$})\otimes \tdun{$1$}\tdun{$2$}
+\tdun{$1,2$}\hspace{2mm}\otimes(\tddeux{$1$}{$2$}+\tddeux{$2$}{$1$}-\tdun{$1$}\tdun{$2$}-\tdun{$1,2$}\hspace{3mm}))\\
&=(11)\otimes (11).
\end{align*}\end{enumerate}

\begin{prop}
$H:(\WQSym,m,\delta)\longrightarrow (\K[X],m,\delta)$ is a bialgebra morphism. 
\end{prop}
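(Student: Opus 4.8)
The plan is to reduce everything to compatibilities already established, exploiting that $H$ factors the known $\delta$-bialgebra morphisms through the surjection $EHR^{str}$. Since $H$ is already a Hopf algebra morphism from $(\WQSym,m,\Delta)$ to $(\K[X],m,\Delta)$ by the previous proposition, it is in particular an algebra morphism; because the products on source and target are unchanged, the only remaining points are that $H$ intertwines the two copies of $\delta$ and that it respects the associated counits.

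First I would emphasize that one cannot transport $\delta$ along $EHR$ (see the remark preceding this statement), so the argument must be routed through $EHR^{str}$. The facts to assemble are: $EHR^{str}$ is a bialgebra morphism from $(\h_\QP,m,\delta)$ to $(\WQSym,m,\delta)$ (the theorem just proved); $\isoclass{}:\h_\QP\to\h_\qp$ is compatible with $\delta$ (the remark following the definition of $\delta$); $ehr^{str}$ is a bialgebra morphism from $(\h_\qp,m,\delta)$ to $(\K[X],m,\delta)$ (Theorem \ref{theo37}); and the identity $ehr^{str}\circ\isoclass{}=H\circ EHR^{str}$ from the previous proposition.

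Then, using surjectivity of $EHR^{str}$, for $w\in\WQSym$ I would choose $x\in\h_\QP$ with $EHR^{str}(x)=w$ and compute
$$\delta\circ H(w)=\delta\circ ehr^{str}\circ\isoclass{}(x)=(ehr^{str}\otimes ehr^{str})\circ(\isoclass{}\otimes\isoclass{})\circ\delta(x),$$
where the second equality uses that $ehr^{str}$ and $\isoclass{}$ are both $\delta$-compatible. Rewriting $ehr^{str}\circ\isoclass{}=H\circ EHR^{str}$ in each tensor slot and then using that $EHR^{str}$ intertwines $\delta$ gives
\begin{align*}
\delta\circ H(w)&=(H\otimes H)\circ(EHR^{str}\otimes EHR^{str})\circ\delta(x)\\
&=(H\otimes H)\circ\delta\circ EHR^{str}(x)\\
&=(H\otimes H)\circ\delta(w),
\end{align*}
which is exactly coproduct compatibility. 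For the counits, a direct computation shows $H_k(1)=1$ for $k\le 1$ and $H_k(1)=0$ for $k\ge 2$, so the counit of $(\K[X],m,\delta)$, namely $P\mapsto P(1)$, applied to $H(w)$ yields $H_{\max(w)}(1)$, which equals $1$ precisely when $w=(1\cdots 1)$, matching $\varepsilon_B$.

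The main obstacle is conceptual rather than computational: one must be careful to lift along $EHR^{str}$ and never along $EHR$, since only $EHR^{str}$ is a morphism for the internal coproduct $\delta$. Once that is arranged, every step is a formal diagram chase through already-proven compatibilities, and the outcome $(H\otimes H)\circ\delta(w)$ depends only on $w$, so independence of the chosen lift $x$ is automatic.
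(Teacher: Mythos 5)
Your proof is correct, but it takes a genuinely different route from the paper's. The paper establishes the coproduct compatibility by a direct computation: it evaluates $(H\otimes H)\circ\delta(w)$ at an arbitrary pair of integers $(a,b)$, rewrites the sum over pairs $(\sigma,\tau)\in A_w$ as a sum over non-decreasing surjections $\sigma:[k]\twoheadrightarrow[l]$, and concludes with the counting identity $\sum_{1\leq l\leq k}\sum_{i_1+\cdots+i_l=k}\binom{a}{l}\binom{b}{i_1}\cdots\binom{b}{i_l}=\binom{ab}{k}=H_k(ab)$, the polynomial identity then following because it holds at all integer points. You instead run a quotient argument: lift $w$ along the surjection $EHR^{str}$ and push $\delta$ through the commuting square $ehr^{str}\circ\isoclass{}=H\circ EHR^{str}$, using the $\delta$-compatibility of $\isoclass{}$, theorem \ref{theo37} for $ehr^{str}$, and the theorem of this subsection for $EHR^{str}$. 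All four ingredients are indeed available at this point in the paper, your insistence on lifting along $EHR^{str}$ rather than $EHR$ is exactly right (the remark preceding the statement shows no coproduct $\delta'$ can be transported along $EHR$), and independence of the lift is automatic since both $\delta\circ H$ and $(H\otimes H)\circ\delta$ are maps defined on all of $\WQSym$. What each approach buys: yours is essentially free given the preceding theorem --- it amounts to the observation, already implicit in the paper's identification of $(\WQSym,m,\delta)$ with $\h_\QP/Ker(EHR^{str})$, that bialgebra morphisms descend to quotients --- whereas the paper's computation is self-contained and exhibits the combinatorial content (the binomial identity behind $H_k(ab)$) explicitly. A small plus of your write-up: you verify the counit condition ($H_k(1)=\binom{1}{k}$, which vanishes for $k\geq 2$ and matches $\varepsilon_B$), a check the paper leaves implicit.
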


\begin{proof} Let $w$ be a packed word. We denote $k=\max(w)$. Let $a,b\in \N$.
\begin{align*}
(H\otimes H)\circ \delta(w)(a,b)&=\sum_{(\sigma,\tau)\in A_w} H_{\max(\sigma \circ w)}(a)H_{\max(\tau \circ w)}(b)\\
&=\sum_{\mbox{\scriptsize $\sigma:[k]\twoheadrightarrow [l]$, non-decreasing}} \binom{a}{l}\binom{b}{|\sigma^{-1}(1)|}\ldots \binom{b}{|\sigma^{-1}(l)|}\\
&=\sum_{\substack{1\leq l\leq k,\\ i_1+\ldots+i_l=k}} \binom{a}{l}\binom{b}{i_1}\ldots \binom{b}{i_l}\\
&=\binom{ab}{k}\\
&=H_k(ab)\\
&=\delta(H(w))(a,b).
\end{align*}
As this is true for any $a,b\in \N$, $(H\otimes H)\circ \delta(w)=\delta \circ H$. \end{proof}

\begin{defi}
Let $w=w_1\ldots w_k$ and $w'=w'_1\ldots w'_l$ be two packed words. We put:
\begin{align*}
w\downarrow w'&=w_1\ldots w_k (w'_1+\max(w))\ldots (w'_l+\max(w)),\\
w \circledast w'&=w_1\ldots w_k (w'_1+\max(w)-1)\ldots (w'_l+\max(w)-1),\\
w\lightning w'&=w\downarrow w'+w\circledast w'.
\end{align*}
These three products are extended to $\WQSym$ by bilinearity.
\end{defi}

\begin{prop}
For all $x,y\in \h_\QP$:
\begin{align*}
EHR^{str}(x\downarrow y)&=EHR^{str}(x) \downarrow EHR^{str}(y),&
EHR(x\downarrow y)&=EHR(x) \lightning EHR(y).
\end{align*}\end{prop}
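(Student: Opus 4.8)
The plan is to reduce to basis elements and then read off both identities from the combinatorics of packed words compatible with an ordinal sum. Since $\downarrow$ on $\h_\QP$ and on $\WQSym$, together with $EHR$ and $EHR^{str}$, are all (bi)linear, it suffices to treat $x=P\in\QP(k)$ and $y=Q\in\QP(l)$. Recall that in $P\downarrow Q$ the underlying set is $[k+l]$, the order restricts to $\leq_P$ on $[k]$ and to $\leq_Q$ on $\{k+1,\dots,k+l\}$, every element of the $P$-block lies below every element of the $Q$-block, and no element of the $Q$-block lies below one of the $P$-block; in particular $\sim_{P\downarrow Q}$ has no class meeting both blocks. Given $w\in W_{P\downarrow Q}$, I would set $u=Pack(w_1\dots w_k)$ and $v=Pack(w_{k+1}\dots w_{k+l})$. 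Order-preservation of $w$ inside each block gives $u\in W_P$ and $v\in W_Q$, so all remaining constraints come from the cross-relations $i\leq_{P\downarrow Q}j$ with $i\leq k<j$, which simply say that every value among $w_1,\dots,w_k$ is $\leq$ every value among $w_{k+1},\dots,w_{k+l}$.

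\textbf{The morphism $EHR^{str}$.} Here each cross-pair satisfies $i\leq_{P\downarrow Q}j$ but $i\not\sim_{P\downarrow Q}j$, so strictness forces $w(i)\neq w(j)$, i.e.\ $\max(w_1,\dots,w_k)<\min(w_{k+1},\dots,w_{k+l})$. Packedness of $w$ then forces its values on the two blocks to be $\{1,\dots,a\}$ and $\{a+1,\dots,\max(w)\}$ with $a=\max(u)$; hence $w_1\dots w_k$ is already packed and equals $u$, while $w_{k+1}\dots w_{k+l}=v+\max(u)$, so $w=u\downarrow v$ with $u\in W^{str}_P$, $v\in W^{str}_Q$. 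Conversely, for any $u\in W^{str}_P$ and $v\in W^{str}_Q$ the word $u\downarrow v$ is strict for $P\downarrow Q$: within-block strictness is inherited, and cross-pairs receive strictly increasing values. This is a bijection $W^{str}_{P\downarrow Q}\cong W^{str}_P\times W^{str}_Q$, so $EHR^{str}(P\downarrow Q)=\sum_{u,v}u\downarrow v=EHR^{str}(P)\downarrow EHR^{str}(Q)$.

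\textbf{The morphism $EHR$.} Without strictness the cross-relations only impose $\max(w_1,\dots,w_k)\leq\min(w_{k+1},\dots,w_{k+l})$. Because $w$ is packed, exactly two configurations are possible: either $\max(w_1,\dots,w_k)<\min(w_{k+1},\dots,w_{k+l})$, giving $w=u\downarrow v$ as above, or $\max(w_1,\dots,w_k)=\min(w_{k+1},\dots,w_{k+l})=a$, in which case packedness forces the $P$-block to realize $\{1,\dots,a\}$ and the $Q$-block $\{a,\dots,\max(w)\}$, so that $w_{k+1}\dots w_{k+l}=v+\max(u)-1$, that is $w=u\circledast v$. These two cases are mutually exclusive, and each pair $(u,v)\in W_P\times W_Q$ produces exactly one word of each type, both in $W_{P\downarrow Q}$, while every $w\in W_{P\downarrow Q}$ arises once. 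This yields a bijection $W_{P\downarrow Q}\cong (W_P\times W_Q)\sqcup(W_P\times W_Q)$ realized by $\downarrow$ and $\circledast$, hence $EHR(P\downarrow Q)=\sum_{u,v}(u\downarrow v+u\circledast v)=EHR(P)\lightning EHR(Q)$.

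The routine parts are verifying that $u,v$ are genuinely order-preserving (resp.\ strict) and that reconstruction inverts restriction. The point deserving care — the main obstacle — is the packedness bookkeeping in the $EHR$ case: one must check that the single inequality $\max(w_1,\dots,w_k)\leq\min(w_{k+1},\dots,w_{k+l})$, combined with surjectivity of $w$ onto $[\max(w)]$, allows precisely a clean cut or an overlap of exactly one level, and rules out any gap or deeper overlap. This dichotomy is exactly the combinatorial content encoded in the identity $\lightning=\downarrow+\circledast$.
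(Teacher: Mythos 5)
Your proof is correct and follows essentially the same route as the paper's: restrict to quasi-posets $P\in\QP(k)$, $Q\in\QP(l)$, split a packed word $w\in W_{P\downarrow Q}$ (resp.\ $W^{str}_{P\downarrow Q}$) into its two block restrictions, and observe that the cross-relations plus packedness force either a clean cut ($\max$ of the first block $=\min$ of the second block $-1$, giving $u\downarrow v$) or a one-level overlap ($\max=\min$, giving $u\circledast v$), which are mutually exclusive — exactly the paper's decomposition $W_{P\downarrow Q}=(W_P\downarrow W_Q)\sqcup(W_P\circledast W_Q)$ and $W^{str}_{P\downarrow Q}=W^{str}_P\downarrow W^{str}_Q$. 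Your packedness bookkeeping (ruling out gaps and deeper overlaps) is a correct elaboration of a step the paper leaves implicit.
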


\begin{proof} Let $P\in \QP(k)$ and $Q\in \QP(l)$. If $w=w_1\ldots w_{k+l}$ is a packed word of length $k+l$:
\begin{align*}
w\in W^{str}_{P\downarrow Q}&\Longleftrightarrow w_1\ldots w_k\in L_P^{str}, w_{k+1}\ldots w_{k+l} \in L_Q^{str}, w_1,\ldots,w_k <w_{k+1},\ldots w_{k+l}\\
&\Longleftrightarrow w=w_P \downarrow w_Q, \mbox{ with }w_P\in W_P^{str}, w_Q \in W_P^{str}.
\end{align*}
So $W_{P\downarrow Q}^{str}=W_P^{str}\downarrow W_Q^{str}$, and:
\begin{align*}
EHR^{str}(P\downarrow Q)&=\sum_{w_P \in W^{str}_P,w_Q\in W^{str}_Q} w_P\downarrow w_Q=EHR^{str}(P)\downarrow EHR^{str}(Q).
\end{align*}
If $w=w_1\ldots w_{k+l}$ is a packed word of length $k+l$:
\begin{align*}
w\in W_{P\downarrow Q}&\Longleftrightarrow w_1\ldots w_k\in L_P, w_{k+1}\ldots w_{k+l} \in L_Q, w_1,\ldots,w_k \leq w_{k+1},\ldots w_{k+l}\\
&\Longleftrightarrow w=(w_P \downarrow w_Q, \mbox{ with }w_P\in W_P, w_Q \in W_P)\\
&\mbox{ or } w=(w_P \circledast w_Q, \mbox{ with }w_P\in W_P, w_Q \in W_P).
\end{align*}
Note that these two conditions are incompatible:
\begin{itemize}
\item in the first case, $\max(w_1\ldots w_k)=\min(w_{k+1}\ldots w_{k+l})-1$;
\item in the second case, $\max(w_1\ldots w_k)=\min(w_{k+1}\ldots w_{k+l})$.
\end{itemize}
So $W_{P\downarrow Q}=(W_P\downarrow W_Q)\sqcup (W_P\circledast W_Q)$, and:
\begin{align*}
EHR(P\downarrow Q)&=\sum_{w_P \in W_P,w_Q\in W_Q} w_P\downarrow w_Q+w_P\circledast w_Q\\
&=EHR(P)\downarrow EHR(Q)+EHR(P)\circledast EHR(Q),
\end{align*}
 so $EHR(P\downarrow Q)=EHR(P)\lightning EHR(Q)$. \end{proof}\\

\textbf{Remark.} As a consequence, $(\WQSym,\downarrow,\Delta)$ and $(\WQSym,\lightning,\Delta)$ are infinitesimal bialgebras
\cite{LodayRonco}, as $(\h_\QP,\downarrow,\Delta)$ is \cite{FMP,FM}.

\begin{cor}
For all $x,y\in \WQSym$:
\begin{align*}
\Phi_{-1}(x\downarrow y)&=\Phi_{-1}(x)\lightning \Phi_{-1}(y)&\Phi_{-1}(x\lightning y)&=\Phi_{-1}(x)\downarrow \Phi_{-1}(y).
\end{align*}\end{cor}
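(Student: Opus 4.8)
The plan is to transport the identity back to $\h_\QP$ along the surjective morphism $EHR^{str}$ and to use the intertwining relations already established in the proof of Theorem \ref{theo45}. Recall from there the Hopf algebra involution $\Psi(P)=(-1)^{cl(P)}P$ of $\h_\QP$ together with the two relations $EHR\circ\Psi=\Phi_{-1}\circ EHR^{str}$ and $EHR^{str}\circ\Psi=\Phi_{-1}\circ EHR$. The only genuinely new ingredient I need is that $\Psi$ is also an algebra morphism for the product $\downarrow$, that is $\Psi(a\downarrow b)=\Psi(a)\downarrow\Psi(b)$; this amounts to $cl(P\downarrow Q)=cl(P)+cl(Q)$, which holds because the product $\downarrow$ only adjoins relations $i\leq j$ running from the first block to the second and never in the reverse direction, so the equivalence classes of $P\downarrow Q$ are exactly those of $P$ together with those of $Q$.

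For the first identity, since $EHR^{str}$ is surjective I write $x=EHR^{str}(a)$ and $y=EHR^{str}(b)$ with $a,b\in\h_\QP$, and then chain the available facts:
\[
\Phi_{-1}(x\downarrow y)=\Phi_{-1}\bigl(EHR^{str}(a)\downarrow EHR^{str}(b)\bigr)=\Phi_{-1}\circ EHR^{str}(a\downarrow b)=EHR\circ\Psi(a\downarrow b),
\]
using the compatibility of $EHR^{str}$ with $\downarrow$ from the previous proposition and then the relation $\Phi_{-1}\circ EHR^{str}=EHR\circ\Psi$. Now applying $\Psi(a\downarrow b)=\Psi(a)\downarrow\Psi(b)$, the compatibility $EHR(\cdot\downarrow\cdot)=EHR(\cdot)\lightning EHR(\cdot)$, and $EHR\circ\Psi=\Phi_{-1}\circ EHR^{str}$ once more, I obtain
\[
EHR\bigl(\Psi(a)\downarrow\Psi(b)\bigr)=EHR(\Psi(a))\lightning EHR(\Psi(b))=\Phi_{-1}(x)\lightning\Phi_{-1}(y),
\]
which is exactly $\Phi_{-1}(x\downarrow y)=\Phi_{-1}(x)\lightning\Phi_{-1}(y)$.

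The second identity then follows for free from the first, with no further computation, because $\Phi_{-1}$ is an involution. Applying the first identity to the elements $\Phi_{-1}(x)$ and $\Phi_{-1}(y)$ gives $\Phi_{-1}\bigl(\Phi_{-1}(x)\downarrow\Phi_{-1}(y)\bigr)=x\lightning y$, and applying $\Phi_{-1}$ to both sides yields $\Phi_{-1}(x)\downarrow\Phi_{-1}(y)=\Phi_{-1}(x\lightning y)$. I expect no real obstacle here; the only point deserving care is the verification that $\Psi$ respects $\downarrow$, and this is precisely the place where the asymmetry of the product $\downarrow$ enters, guaranteeing that the sign exponents $cl$ add rather than interfere and thereby explaining why $\downarrow$ and $\lightning$ get swapped by $\Phi_{-1}$.
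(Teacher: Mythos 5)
Your proof is correct and follows essentially the same route as the paper: transport the identity along the surjection $EHR^{str}$, use $\Psi(a\downarrow b)=\Psi(a)\downarrow\Psi(b)$ (via $cl(P\downarrow Q)=cl(P)+cl(Q)$) together with the intertwining relations $EHR\circ\Psi=\Phi_{-1}\circ EHR^{str}$ and $EHR^{str}\circ\Psi=\Phi_{-1}\circ EHR$ from Theorem \ref{theo45}, and deduce the second identity from the involutivity of $\Phi_{-1}$. Your only addition is an explicit justification of $cl(P\downarrow Q)=cl(P)+cl(Q)$, which the paper uses without comment.
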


\begin{proof}
If $P,Q\in \QP$, then $cl(P\downarrow Q)=cl(P)+cl(Q)$, so:
$$\Psi(P\downarrow Q)=(-1)^{cl(P)+cl(Q)}P\downarrow Q=\Psi(P)\downarrow \Psi(Q).$$
Let $x,y\in \WQSym$. There exist $x',y'\in \h_\QP$, such that $EHR^{str}(x')=x$ and $EHR^{str}(y')=y$. Hence,
using the non-commutative duality principle:
\begin{align*}
\Phi_{-1}(x\downarrow y)&=\Phi_{-1}(EHR^{str}(x') \downarrow EHR^{str}(y'))\\
&=\Phi_{-1}\circ EHR^{str}(x'\downarrow y')\\
&=\Phi_{-1} \circ EHR^{str}\circ \Psi(\Psi(x') \downarrow \Psi(y'))\\
&=EHR(\Psi(x')\downarrow \Psi(y'))\\
&=EHR\circ \Psi(x') \lightning EHR\circ \Psi(y')\\
&=\Phi_{-1}(\Phi_{-1} \circ EHR \circ \Psi(x')) \lightning \Phi_{-1}(\Phi_{-1}\circ EHR \circ \Psi(y'))\\
&=\Phi_{-1}(EHR^{str}(x'))\lightning \Phi_{-1}(EHR^{str}(y'))\\
&=\Phi_{-1}(x)\lightning \Phi_{-1}(y).
\end{align*}
As $\Phi_{-1}$ is an involution, we obtain the second point. \end{proof}

\subsection{Restriction to posets}

In \cite{FM}, the image of the restriction  to $\h_\P$  of the map from $\h_\QP$ to $\WQSym$ defined by $T$-partitions
is a Hopf subalgebra, isomorphic to the Hopf algebra of permutations $\mathbf{FQSym}$ \cite{MR2,DHT}. This is not the case here:

\begin{prop}
$EHR(\h_\P)=EHR^{str}(\h_\P)=\WQSym$.
\end{prop}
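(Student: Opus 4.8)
The plan is to establish the equality first for $EHR^{str}$ on $\h_\P$ by an explicit triangular construction, and then to deduce it for $EHR$ from the non-commutative duality principle. Since $\WQSym$ is graded by the length of packed words and both $EHR$ and $EHR^{str}$ send a poset $P\in\P(n)$ to a sum of packed words of length $n$, I will work degree by degree: writing $\WQSym_n$ for the span of packed words of length $n$, it suffices to prove $EHR^{str}(Vect(\P(n)))=\WQSym_n$ for every $n$.

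To this end I will attach to each packed word $w=w_1\ldots w_n$ the poset $P_w$ on $[n]$ defined by $i\leq_{P_w}j$ if $i=j$ or $w_i<w_j$; this is a genuine (antisymmetric) partial order in which $i$ and $j$ are incomparable exactly when $w_i=w_j$. The crucial point, and the reason a direct surjectivity proof fails, is that the total-preorder construction used in the proof of Proposition \ref{prop41} is a poset only when $w$ is a permutation, so for a general $w$ no poset can realise $W^{str}$ as the singleton $\{w\}$. Instead, unravelling the definition of $W^{str}$ and using that $\sim_{P_w}$ is trivial, I expect to find that a packed word $u$ of length $n$ lies in $W^{str}_{P_w}$ if and only if $w_i<w_j\Rightarrow u_i<u_j$, that is, if and only if $w\leq u$ for the order of Definition \ref{defi28}. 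This will give $EHR^{str}(P_w)=\sum_{w\leq u}u$, the sum over the up-set of $w$.

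Next I will use that the order of Definition \ref{defi28} is a genuine partial order on the finite set of packed words of length $n$: antisymmetry holds because $w\leq u$ and $u\leq w$ force $w$ and $u$ to induce the same weak order on $[n]$, hence to coincide. Consequently the matrix $M$ with $M_{w,u}=1$ when $w\leq u$ and $M_{w,u}=0$ otherwise is the zeta matrix of this poset, unitriangular along any linear extension of $\leq$ and therefore invertible over $\mathbb{Z}$. Thus the family $\{EHR^{str}(P_w)\}_w$ is deduced from the basis $\{u\}_u$ of $\WQSym_n$ by an invertible upper-triangular change of basis and so spans $\WQSym_n$. As each $P_w$ lies in $\P(n)$, this yields $EHR^{str}(Vect(\P(n)))=\WQSym_n$ for all $n$, hence $EHR^{str}(\h_\P)=\WQSym$.

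Finally I will transfer the result to $EHR$. For $P\in\P(n)$ one has $cl(P)=n$, so the non-commutative duality principle (Theorem \ref{theo45}) gives $EHR(x)=(-1)^n\,\Phi_{-1}\circ EHR^{str}(x)$ for all $x\in Vect(\P(n))$. The automorphism $\Phi_{-1}$ of $\WQSym$ preserves the length of packed words, since every word $\sigma\circ w$ occurring in $\Phi_{-1}(w)$ has the same length as $w$; hence it restricts to a linear automorphism of each $\WQSym_n$. Applying it to the already established equality $EHR^{str}(Vect(\P(n)))=\WQSym_n$ gives $EHR(Vect(\P(n)))=\Phi_{-1}(\WQSym_n)=\WQSym_n$, and summing over $n$ yields $EHR(\h_\P)=\WQSym$. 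The main obstacle I anticipate is the precise computation of $W^{str}_{P_w}$ together with the realisation that a poset produces an entire up-set rather than a single packed word, which is exactly what replaces a naive surjectivity argument by the triangular inversion above.
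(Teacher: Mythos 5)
Your proposal is correct and follows essentially the same route as the paper: you attach to each packed word $w$ the poset $P_w$ defined by $i\leq_{P_w}j$ if $i=j$ or $w_i<w_j$, identify $W^{str}_{P_w}$ with the up-set $\{u\mid w\leq u\}$ for the order of definition \ref{defi28}, conclude by triangularity that $EHR^{str}(\h_\P)=\WQSym$, and transfer to $EHR$ via the non-commutative duality principle and the automorphism $\Phi_{-1}$. Your only departures are cosmetic refinements of the same argument -- making the triangularity explicit as unitriangularity of the zeta matrix of the (antisymmetric) order on packed words, and running the duality step degree by degree using $cl(P)=n$ and the length-preservation of $\Phi_{-1}$ instead of the paper's use of the sign involution $\Psi$.
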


\begin{proof} Let $w$ be a packed word of length $n$. We define a poset $P$ on $[n]$ by:
\begin{align*}
&\forall i,j\in [n],\: i\leq_P j\mbox{ if } (i=j)\mbox{ or} (w(i)<w(j)).
\end{align*} 
Note that if $i\leq_P j$, then $w(i)\leq w(j)$.
If $i\leq_P j$ and $j\leq_P k$, then:
\begin{itemize}
\item if $i=j$ or $j=k$, then obviously $i\leq_P k$.
\item Otherwise, $w(i)<w(j)$ and $w(j)<w(k)$, so $w(i)<w(k)$ and $i\leq_P k$.
\end{itemize}
Let us assume that $i\leq_P j$ and $j\leq_P i$. Then $w(i)\leq w(j)$ and $w(j)\leq w(i)$, so $w(i)=w(j)$. As $i\leq_P j$, $i=j$.
So $P$ is indeed a poset, and we observed that $w \in W_P$. \\

Let $w'$ be a packed word of length $n$. Let us prove that $w'\in W^{str}_P$ if, and only if, $w\leq w'$, where $\leq$ is the order on packed words
defined in definition \ref{defi28}. 

$\Longrightarrow$. Let us assume that $w'\in W^{str}_P$. If $w(i)<w(j)$, then $i\leq_P j$, so $w'(i)\leq w'(j)$.
Moreover, if $w'(i)=w'(j)$, then $i\leq_P j$, so $i=j$ as $P$ is a poset, and finally $w(i)=w(j)$: contradiction. So $w'(i)<w'(j)$,
we shows that $w\leq w'$.

$\Longleftarrow$. Let us assume that $w'\leq w$. If $i\leq_P j$, then $i=j$ or $w(i)<w(j)$, so $w'(i)=w'(j)$ ot $w'(i)<w'(j)$.
If, moreover, $w'(i)=w'(j)$, then $i=j$; so $w'\in W^{str}_P$.\\

We obtain an element $P\in \h_\P$ such that:
$$EHR^{str}(P)=\sum_{w\leq w'} w'.$$
As this holds for any $w$, by a triangularity argument, $EHR^{str}(\h_\P)=\WQSym$. By the non-commutative duality principle:
$$EHR(\h_\P)=\Phi_{-1}\circ EHR^{str}\circ \Psi(\h_\P)=
\Phi_{-1}\circ EHR^{str}(\h_\P)=\Phi_{-1}(\WQSym)=\WQSym,$$
as $\Phi_{-1}$ is an automorphism of $\WQSym$. \end{proof}

\bibliographystyle{amsplain}
\bibliography{biblio}

\end{document}